\newlength{\wideitemsep}
\let\olditem\item
\renewcommand{\item}{\setlength{\itemsep}{\wideitemsep}\olditem}
\numberwithin{equation}{section}
\renewcommand{\geq}{\geqslant}
\renewcommand{\leq}{\leqslant}
\newcommand{\lra}{\longrightarrow}
\newcommand{\image}{\textnormal{im}\,}
\newcommand{\Coh}{\textnormal{Coh}}
\newcommand{\HHom}{\mathcal{H}om}
\newcommand{\OO}{\mathcal{O}}
\newcommand{\wh}{\widehat}
\newcommand{\cB}{\mathcal{B}}
\newcommand{\rk}{\mathrm{rank}}
\newcommand{\dimension}{\mathrm{dim}\,}
\newtheorem*{rep@theorem}{\rep@title}
\newcommand{\newreptheorem}[2]{%
\newenvironment{rep#1}[1]{%
 \def\rep@title{#2 \ref{##1}}%
 \begin{rep@theorem}}%
 {\end{rep@theorem}}}
\newtheorem{theorem}{Theorem}[section]
\newtheorem{lemma}[theorem]{Lemma}
\newtheorem{sublemma}[theorem]{Sublemma}
\newtheorem{proposition}[theorem]{Proposition}
\newtheorem{corollary}[theorem]{Corollary}
\newtheorem*{theorem6-18}{Theorem 5.18}
\newtheorem*{theorem6-21}{Theorem 5.21}
\theoremstyle{definition}
\newtheorem{example}[theorem]{Example}
\theoremstyle{remark}
\newtheorem{remark}[theorem]{Remark}
\newcommand{\cA}{\mathcal{A}}
\newcommand{\cC}{\mathcal{C}}
\newcommand{\cE}{\mathcal{E}}
\newcommand{\cF}{\mathcal{F}}
\newcommand{\cO}{\mathcal{O}}
\newcommand{\cP}{\mathcal{P}}
\newcommand{\cQ}{\mathcal{Q}}
\newcommand{\cT}{\mathcal{T}}
\newcommand{\bR}{\mathbf{R}}
\newcommand{\bZ}{\mathbb{Z}}
\DeclareMathOperator{\coh}{Coh}
\DeclareMathOperator{\pic}{Pic}
\DeclareMathOperator{\Hom}{Hom}
\DeclareMathOperator{\amp}{Amp}
\DeclareMathOperator{\ns}{NS}
\DeclareMathOperator{\ch}{ch}
\DeclareMathOperator{\supp}{Supp}
\DeclareMathOperator{\rank}{rank}
\DeclareMathOperator{\chern}{ch}
\DeclareMathOperator{\td}{td}
\newcommand{\HH}{{\rm H}}
\newtheorem*{theorem4-13}{Theorem 4.12}
\newtheorem*{theorem4-26}{Theorem 4.26}
\begin{document}

\title{Preservation of semistability under Fourier-Mukai transforms}

\author[Jason Lo]{Jason Lo}
\address{Department of Mathematics \\
University of Illinois at Urbana-Champaign\\
1409 W Green St\\
Urbana IL 61801 \\
USA}
\email{jccl@illinois.edu}

\author{Ziyu Zhang}
\address{Department of Mathematical Sciences, University of Bath, Claverton Down, Bath BA2 7AY, United Kingdom}
\email{zz505@bath.ac.uk}

\subjclass[2010]{Primary 14D20; Secondary 18E30, 14J30}
\keywords{moduli spaces of sheaves and complexes, stability conditions, derived categories, Fourier-Mukai transforms, elliptic fibrations}

\begin{abstract}
For a trivial elliptic fibration $X=C \times S$ with $C$ an elliptic curve and $S$ a projective K3 surface of Picard rank $1$, we study how various notions of stability behave under the Fourier-Mukai autoequivalence $\Phi$ on  $D^b(X)$, where $\Phi$ is induced by the classical Fourier-Mukai  autoequivalence on $D^b(C)$.  We show that, under some restrictions on  Chern classes, Gieseker semistability on coherent sheaves is preserved under $\Phi$ when the polarisation is `fiber-like'.  Moreover, for more general choices of Chern classes, Gieseker semistability under a `fiber-like' polarisation corresponds to a notion of $\mu_\ast$-semistability defined by a  `slope-like' function $\mu_\ast$.
\end{abstract}


\maketitle

\section{Introduction}

This article is a continuation of the first author's study of elliptic fibrations.  In the predecessor to this article \cite{Lo11}, the first author studied t-structures that arise naturally on the derived category of coherent sheaves on an elliptic fibration.  In this article, we consider different notions of stability that can be paired with these t-structures, and describe how these notions of stability correspond to one another under Fourier-Mukai transforms.

In general, given an exact equivalence $\Phi : D^b(X) \overset{\thicksim}{\to} D^b(Y)$ between two derived categories and an object $E \in D^b(X)$ that is stable with respect a notion of stability $\sigma$ on $X$, we can ask: under what notion of stability $\sigma'$ on $Y$ (depending only on the geometry of $Y$) is $\Phi (E)$ stable?





The preservation of semistability for sheaves under Fourier-Mukai transforms has been studied by various authors before.    For example, in the case of sheaves, isomorphisms between moduli spaces of torsion-free sheaves were constructed by Yoshioka on K3 surfaces and Abelian surfaces \cite{YosPI, YosPII}, by Bernardara-Hein on elliptic surfaces \cite{BH} and by Bridgeland-Maciocia on elliptic threefolds \cite{BM04}.  On the other hand, isomorphisms between moduli spaces of torsion sheaves and torsion-free sheaves were constructed by Yoshioka on elliptic surfaces \cite{YosAS, YosPII} and Bridgeland on Abelian surfaces \cite{BriTh}.  And more recently, on elliptic threefolds, isomorphisms between moduli spaces of torsion sheaves were established by Diaconescu \cite{Dia}.


On elliptic surfaces, Yoshioka showed that, with respect to polarisations that approach the fiber direction, moduli spaces of semistable sheaves supported in dimension 1 are isomorphic to moduli spaces of semistable torsion-free (i.e.\ 2-dimensional) sheaves via a Fourier-Mukai transform \cite{YosAS, YosPII}.  In this article, we generalise Yoshioka's result to the threefold $X = C \times S$, where $C$ is an elliptic curve and $S$ a K3 surface of Picard rank 1.  The main reason for focusing on the product case in this article is to keep the computations involving Chern classes as tractable as possible.

We consider $X$ as a (trivial) elliptic fibration via the second projection $\pi : C \times S \to S$.  The classical Fourier-Mukai transform $D^b(C) \to D^b(C)$ on a smooth elliptic curve $C$, with the Poincar\'{e} line bundle as the kernel, pulls back via $\pi$ to a Fourier-Mukai transform $\Phi : D^b(X) \to D^b(X)$ on $X$.  Suppose the Picard group of $S$ is generated by the ample divisor class $H_S$.  Writing $D = \pi^\ast H_S$,  $H$ for the zero section of $\pi$, and $\omega = H+nD$, we have  the following two theorems:



\begin{theorem6-18}
For any fixed $\ch = (\ch_0, \ch_1, \ch_2, \ch_3) \in H^\ast_{\textrm{alg}}(X, \bZ)$ satsifying
\[
\ch_0 = 0, \ \ch_1 \cdot H \cdot D = 0, \ \ch_1 \cdot D^2 \neq 0, \ \ch_2 \cdot H = 0,
\]
there is an isomorphism between the following moduli spaces:
\begin{itemize}
\item[(a)] The moduli space of $\omega$-semistable sheaves $F$ on $X$ with $\ch (F) = \ch$;
\item[(b)] The moduli space of $\omega$-semistable sheaves $E$ for $n \gg 0$ on $X$ with $\ch (E) = \Phi^{H} (\ch)$.
\end{itemize}
\end{theorem6-18}
Here, $\Phi^{H}$ denotes the cohomological Fourier-Mukai transform induced by $\Phi$.  The moduli space in (a) parametrises torsion sheaves (supported in dimension 2), while the moduli space in (b) parametrises torsion-free sheaves (supported in dimension 3).

The constraints on the Chern character in Theorem \ref{0602-theorem1} can be relaxed, in which case  we have:

\begin{theorem6-21}
There is an equivalence of categories induced by $\Phi$
\begin{multline*}
\{ E \in \{\Coh^{\leq 0}\}^\uparrow : \ch_1(E) \neq 0, E \text{ is $\mu_\ast$-semistable}\} \\
 \overset{\thicksim}{\to} \{E \in \Phi (\{\Coh^{\leq 0}\}^\uparrow) : \ch_0(E) \neq 0, E \text{ is $\mu_\omega$-semistable for $n \gg 0$}\}.
\end{multline*}
\end{theorem6-21}
Here, the category $\{\Coh^{\leq 0}\}^\uparrow$ comprises coherent sheaves $E$ on $X$ satisfying the following property: for any closed point $s \in S$, the restriction $E|_s$ of $E$ to the fiber over $s$ has 0-dimensional support.  This category includes, for instance, sheaves supported on `spectral covers', which were used by Friedman-Morgan-Witten to construct stable sheaves on elliptic fibrations \cite{FMW}.  Also, the notion of $\mu_\ast$-semistability is given by the `slope-like' function $\mu_\ast$, which is defined using components of the first and the second Chern characters (see Section \ref{sec-slopelike} for the definition of a slope-like function).

In essence, Theorem \ref{0602-theorem1} says that $\mu_\omega$-semistability for $n \gg 0$ corresponds to the notion of $\mu_\ast$-semistability under the Fourier-Mukai transform $\Phi$.

In Section \ref{sec-prelim} of this article, we fix our notations and give reminders on the basics of Fourier-Mukai transforms and torsion classes.

In Section \ref{sec-slf}, we consider an arbitrary elliptic fibration $\pi : X \to S$ with a dual, and describe  a filtration for coherent sheaves on $X$ that replaces the relative Harder-Narasimhan filtration with respect to the morphism $\pi$  (Proposition \ref{pro6}).  The advantage of this filtration is that it makes no explicit reference to stability.  Then, we introduce the notion of a slope-like function.  A slope-like function yields a notion of semistability that has the Harder-Narasimhan property, and is closely related to the concept of a weak stability condition introduced by Toda \cite{Toda11-1} and the notion of a weak central charge defined by Brown-Shipman \cite{BS}.

In Section \ref{sec-FMTcohom}, we restrict our attention to the product threefold $X = C \times S$, where $C$ is an elliptic curve and $S$ a K3 surface of Picard rank 1. We describe the Fourier-Mukai transform $D^b(X) \overset{\thicksim}{\to} D^b(Y)$ that we will use in proving Theorems \ref{0602-theorem1} and \ref{0602-theorem2}, and describe the corresponding cohomological Fourier-Mukai transform.

In Section \ref{sec:pres}, we prove our main results, Theorems \ref{0602-theorem1} and \ref{0602-theorem2}.  And in the Appendix, we demonstrate that our approach recovers Yoshioka's result \cite[Theorem 3.15]{YosAS} in the case of the product $C \times T$, where $C$ is an elliptic curve and $T$ is an arbitrary smooth projective curve.

\noindent
\textbf{Acknowledgements.} We thank Arend Bayer for many helpful discussions during the entire span when this project is carried out. The first author would like to thank the Max Planck Institute for Mathematics in Bonn, for their hospitality during his stay from April through May 2014, during which this project was conceived.  He would also like to thank the National Center for Theoretical Sciences in Taipei, for their support during his visit from May to July 2015, when part of this work was completed. The second author would like to thank Alastair Craw for his support via the EPSRC grant EP/J019410/1.

\section{Preliminaries on elliptic fibrations and torsion classes}\label{sec-prelim}

In this section we collect some notions and results that we will use in later discussions.

\subsection{Elliptic fibrations}

In this paper, an \emph{elliptic fibration} is a flat morphism of smooth projective varieties $\pi: X \to S$ whose fibers are Gorenstein curves of arithmetic genus $1$. We are mainly interested in the case that $\dim X = 3$, although we will also consider the case $\dim X=2$ in the final section.

We mention a few examples of elliptic threefolds. The simplest example is the trivial fibration, namely, $X=C \times S$ for some elliptic curve $C$ and surface $S$, and $\pi$ is the projection to the second factor. This is the example that we will focus on in this paper. Indeed, as we will study derived categories of the elliptic threefold $X$, which has abundant structures, it helps to keep the structure of the elliptic fibration itself simple. In fact, even in the case of trivial fibrations, we can already discover lots of interesting features of the derived category of $X$.

We will also use the following notations throughout our discussion.

\begin{itemize}
\item For any smooth projective variety $X$, we will write $D(X)=D^b(\Coh (X))$ to denote the bounded derived category of coherent sheaves on $X$ unless otherwise stated.
\item For any smooth projective variety $X$ and any integer $d \geq 0$, we will write $\Coh^{\leq d}(X)$ to denote the category of coherent sheaves on $X$ supported in dimension at most $d$, write $\Coh^{=d}(X)$ to denote the category of pure $d$-dimensional coherent sheaves, and write $\Coh^{\geq d}(X)$ to denote the category of coherent sheaves with no nonzero subsheaves supported in dimension $d-1$ or lower.
\item For any smooth projective variety $X$ of dimension $d$, we will sometimes write $\mathcal F_X$ instead of $\Coh^{=d}(X)$ to denote the category of torsion-free sheaves on $X$.
\item Assume $\pi: X \to S$ is an elliptic fibration as above. For any closed point $s \in S$, we will write $\iota_s$ to denote the closed immersion of the fiber $X_s \to X$ of $\pi$ over  $s$.  When $E$ is a coherent sheaf on $X$, we will write $E|_s$ to denote the underived restriction $\iota_s^\ast E$, and write $E|_s^L$ to denote the derived restriction $L\iota_s^\ast E$.
\item If a coherent sheaf $E$ on $X$ is supported on a finite number of fibers of $\pi$, then we will refer to it as a \emph{fiber sheaf}.
\item We write $\Coh (\pi)_{\leq d}$ to denote the category of coherent sheaves $E$ on $X$ such that the dimension of $\pi (\supp(E))$ is at most $d$.
\item We write $\{\Coh^{\leq 0}(X_s)\}^\uparrow$ to denote the category of coherent sheaves $E$ on $X$ such that the restriction $E|_s$ to the fiber over any closed point $s \in S$ is supported in dimension 0. When $X$ is understood from the context, we also write $\{\Coh^{\leq 0}\}^\uparrow$ for simplicity.
\end{itemize}

\subsection{Fourier-Mukai transforms}

Our main tool to study the derived category of $X$ is (relative) Fourier-Mukai transforms. The theory of Fourier-Mukai transforms on elliptic fibrations has been developed by many authors. One of the general results that we found most inspiring is the following theorem due to Bridgeland and Maciocia. Notice that their original statement is slightly more general than the following statement.

\begin{theorem}
\cite[Theorem 1.2]{BM04}
Let $\pi: X \to S$ be an elliptic fibration and $\ell$ a polarisation on $X$. Let $Y$ be an irreducible component of the relative moduli space of $\ell$-stable sheaves on fibers of $\pi$. Assume that $Y$ is fine and of the same dimension as $X$, and that the morphism $\hat{\pi}: Y \to S$ is equidimensional. Then $\hat{\pi}: Y \to S$ is also an elliptic fibration. Moreover, the following integral functor induced by the universal sheaf $\cP$ on $X \times Y$ is a Fourier-Mukai transform:
\begin{align}
\label{eqn:FM-def}
\Phi=\Phi^{\cP}_{Y \to X}: \quad D^b(Y) &\to D^b(X); \\
\cE &\mapsto \bR\pi_{X,\ast}(\cP \otimes \pi^{\ast}_Y(\cE)), \nonumber
\end{align}
where $\pi_X$ and $\pi_Y$ are projections from $X \times Y$ to the two factors.
\end{theorem}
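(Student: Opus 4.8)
The plan is to establish the equivalence by Bridgeland's criterion for integral functors, reducing the global statement to pointwise $\Ext$-computations and then upgrading full faithfulness to an equivalence via a compatibility of canonical bundles. The single geometric fact driving every step is that the fibers $X_s$ are Gorenstein curves of arithmetic genus $1$, so that $\omega_{X_s} \cong \cO_{X_s}$.

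First I would identify the images of point sheaves. By flat base change, for a closed point $y \in Y$ lying over $s = \hat{\pi}(y)$ one has $\Phi(\cO_y) \cong \cP_y := \cP|_{X \times \{y\}}$, which by the moduli interpretation of $Y$ is (the pushforward to $X$ of) the $\ell$-stable sheaf on $X_s$ represented by $y$. This also settles the first assertion: the fiber $Y_s$ of $\hat{\pi}$ is the moduli space of $\ell$-stable sheaves of the prescribed numerical type on the genus-$1$ Gorenstein curve $X_s$, which is again such a curve; combined with the hypotheses that $Y$ is smooth of dimension $\dim X$ and that $\hat{\pi}$ is equidimensional, it follows that $\hat{\pi}\colon Y \to S$ is an elliptic fibration.

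The technical heart is the pointwise computation showing that $\cP$ is strongly simple over $Y$. For closed points $y_1,y_2$ I must evaluate $\Ext^i_X(\cP_{y_1},\cP_{y_2})$. If $y_1,y_2$ lie over distinct points of $S$ the supports are disjoint and every group vanishes. If they lie over the same $s$, then $G_1 := \cP_{y_1}$ and $G_2 := \cP_{y_2}$ are $\ell$-stable of the same slope on $X_s$, hence simple, so $\Hom_{X_s}(G_1,G_2) = \bC$ when $y_1 = y_2$ and $0$ otherwise. To pass from the curve to the threefold I would use that the fiber is cut out by two pullback equations, so $X_s$ is a regular embedding with trivial normal bundle $N_{X_s/X} \cong \pi^\ast T_{S,s} \cong \cO_{X_s}^{\oplus 2}$; the Koszul resolution then gives $\Ext^i_X(\iota_{s\ast}G_1,\iota_{s\ast}G_2) \cong \bigoplus_{p+q=i} \Ext^p_{X_s}(G_1,G_2)\otimes \Lambda^q N_{X_s/X}$. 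Since $\omega_{X_s}\cong\cO_{X_s}$, Serre duality on the curve gives $\Ext^1_{X_s}(G_1,G_2)\cong \Hom_{X_s}(G_2,G_1)^\vee$, so on the curve only degrees $0$ and $1$ survive. Feeding this into the decomposition shows that for $y_1\neq y_2$ all groups vanish, while for $y_1=y_2$ the groups are concentrated in degrees $0 \leq i \leq 3 = \dim X$ with $\Ext^0 = \bC$. This is exactly strong simplicity, and Bridgeland's criterion then yields that $\Phi$ is fully faithful.

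Finally I would upgrade to an equivalence using the refinement of the criterion: a fully faithful integral functor whose kernel satisfies $\cP_y\otimes\omega_X \cong \cP_y\otimes\omega_Y$ for all $y$ is an equivalence. Both sides are computed on the fiber, and since the relative dualizing sheaf of an elliptic fibration is trivial along fibers, $\omega_X|_{X_s}\cong \pi^\ast\omega_S|_{X_s}$ is a trivial line bundle, and likewise for $Y$; hence the condition holds automatically and $\Phi$ is a Fourier-Mukai transform. I expect the main obstacle to be the $\Ext$-computation together with its base-change and flatness bookkeeping: one must justify the normal-bundle decomposition of $\Ext^\ast_X$ and its validity over the singular (nodal or cuspidal) fibers, where $X_s$ is only Gorenstein and $Y_s$ is a compactified Jacobian, and must carefully verify $\Phi(\cO_y)\cong\cP_y$ so that the pointwise criterion genuinely applies. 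Once the pointwise orthogonality is in place, the passage to a global equivalence is formal.
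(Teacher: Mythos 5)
You should first note that the paper contains no proof of this statement at all: it is quoted verbatim as background from \cite{BM04}, so your attempt can only be measured against Bridgeland--Maciocia's own argument. Your overall frame (Bridgeland's criterion: strong simplicity of the kernel, then the canonical-bundle condition to upgrade full faithfulness to an equivalence) is indeed the frame used in \cite{BM04}, but the technical core of your proof fails exactly where the theorem has content, namely over the singular fibers. The decomposition $\Ext^i_X(\iota_{s\ast}G_1,\iota_{s\ast}G_2)\cong\bigoplus_{p+q=i}\Ext^p_{X_s}(G_1,G_2)\otimes\Lambda^q N_{X_s/X}$ is not what the Koszul resolution gives; it gives only a spectral sequence $E_2^{p,q}=\Ext^p_{X_s}(G_1,G_2\otimes\Lambda^qN)\Rightarrow\Ext^{p+q}_X(\iota_{s\ast}G_1,\iota_{s\ast}G_2)$. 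Worse, the input you feed into it --- ``on the curve only degrees $0$ and $1$ survive,'' via the duality $\Ext^1_{X_s}(G_1,G_2)\cong\Hom_{X_s}(G_2,G_1)^\vee$ --- is false when $X_s$ is singular and the $G_i$ are not locally free, which does happen: over a nodal fiber the points of $Y_s$ include the non-invertible rank-one torsion-free sheaves on $X_s$ (the boundary of the compactified Jacobian). A singular curve has infinite global dimension, and $D^b(X_s)$ has no Serre functor (duality pairs perfect complexes against $D^b$, so your form of Serre duality needs one argument to be perfect). Concretely, if $R=k[[x,y]]/(xy)$ is the local ring at a node and $\mathfrak{m}=(x,y)\cong R/(x)\oplus R/(y)$, the $2$-periodic resolutions give $\Ext^p_R(\mathfrak{m},\mathfrak{m})\neq 0$ for \emph{every} $p\geq 0$; hence for the corresponding stable sheaf $G$ one has $\Ext^p_{X_s}(G,G)\neq 0$ for all $p$. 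Your $E_2$ page then has infinitely many nonzero entries which can only cancel against one another via differentials, and no termwise vanishing of $\Ext^i_X$ can be extracted. So strong simplicity --- the heart of the matter --- is not established by your route.

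The repair, and the reason the theorem is true, is that all $\Ext$ computations can and must be carried out on \emph{smooth ambient} varieties rather than on the fibers. On $X$ itself, stability gives $\Hom_X(\iota_{s\ast}G_1,\iota_{s\ast}G_2)=0$, and Serre duality on the smooth threefold $X$ together with triviality of $\omega_X$ along fibers gives $\Ext^3_X=0$; but degrees $1$ and $2$ are merely dual to each other, so one needs more. Bridgeland--Maciocia obtain the missing vanishing by working inside a smooth elliptic \emph{surface} $\pi^{-1}(T)\subset X$ (with $T\subset S$ a general curve through $s$): there the Euler pairing of two fiber-supported sheaves vanishes by Riemann--Roch, so $\Hom=\Ext^2=0$ forces $\Ext^1=0$, and a one-step divisor exact sequence (using that $\cO_X(\pi^{-1}(T))$ restricts trivially to fibers) lifts the full vanishing from $\pi^{-1}(T)$ to $X$. (An alternative route, taken in \cite{FMNT} for Weierstrass fibrations, bypasses pointwise criteria entirely and verifies the invertibility $\Psi\circ\Phi\cong\iota^\ast[-1]$ directly by base change and relative duality.) Two further points you assume rather than prove: smoothness of $Y$ --- the hypotheses grant only that $Y$ is fine, irreducible and of dimension $\dim X$, while smoothness is both part of the conclusion ``$\hat{\pi}\colon Y\to S$ is an elliptic fibration'' under this paper's definition and a prerequisite for invoking Bridgeland's criterion at all --- and the identification of the fibers $Y_s$ over singular $X_s$ (compactified Jacobians) as Gorenstein curves of arithmetic genus $1$. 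In the trivial fibration $X=C\times S$ actually studied in this paper all fibers are smooth and your argument would essentially go through; but for the general statement you set out to prove, the singular fibers are precisely the hard case, and there your proof has a genuine gap.
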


In the above theorem, the two elliptic fibrations $\pi: X \to S$ and $\hat{\pi}: Y \to S$ are called \emph{dual fibrations} of each other. The Fourier-Mukai transform $\Phi=\Phi^{\cP}_{Y \to X}$ has a quasi-inverse, which is given by
$$ \Psi=\Phi^{\cQ}_{X \to Y}: D^b(X) \to D^b(Y), $$
where $\cQ$ is the object
$$ \cQ = \bR\HHom_{\cO_{X \times Y}}(\cP, \pi_X^\ast\omega_X)[\dim X]. $$
Notice that if the Fourier-Mukai kernel $\cP$ is a line bundle, then $\cQ$ is also a (shifted) line bundle.

With some extra assumption on the elliptic fibration $\pi: X \to S$, the above result can be made more explicit. We say an elliptic fibration $\pi: X \to S$ is a Weierstrass fibration if the fibers of $\pi$ are integral and there exists a section $\sigma: S \to X$ of $\pi$ whose image does not contain any singular point of the fibers. When $\pi: X \to S$ is a Weierstrass fibration, we can make a particular choice of $Y$ and $\cP$ so that $\Phi$ becomes an automorphism of $D^b(X)$.

\begin{theorem}
\cite[Theorem 6.18]{FMNT}
Let $\pi: X \to S$ be a Weierstrass fibration and $\cP$ be the relative Poincar\'e sheaf on $X \times X$, then the integral functor $\Phi = \Phi^{\cP}_{X \to X}$ defined as in \eqref{eqn:FM-def} is a Fourier-Mukai transform.
\end{theorem}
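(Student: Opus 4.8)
The plan is to deduce this from the Bridgeland--Maciocia theorem quoted above by exhibiting $X$ itself as an allowable choice of dual fibration $Y$, with $\cP$ as the universal sheaf. First I would identify the target of the moduli problem. For a Weierstrass fibration each fiber $X_s$ is an integral Gorenstein curve of arithmetic genus $1$, so on each fiber the relevant moduli space of degree-$0$, rank-$1$ torsion-free sheaves is the compactified Jacobian $\overline{\mathrm{Jac}}^{\,0}(X_s)$, which is canonically isomorphic to $X_s$ once the section $\sigma$ is used to rigidify. Assembling these over $S$, I would show that the relative moduli space $Y$ is representable and isomorphic to $X$, with $\hat\pi : Y \to S$ identified with $\pi$; in particular $\hat\pi$ is equidimensional and $\dim Y = \dim X$.

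Next I would verify fineness. Rank-$1$ torsion-free sheaves on an integral curve are automatically stable, so there is no strictly semistable locus to obstruct the existence of a universal family, and the section $\sigma$ provides the normalisation needed to rigidify the moduli functor. The relative Poincar\'e sheaf $\cP$ on $X \times_S X$ (pushed forward to $X \times X$) is then precisely this universal sheaf, so $Y \cong X$ is fine. With the hypotheses of the quoted theorem in place --- $Y$ fine, equidimensional over $S$, of the same dimension as $X$ --- I would conclude that $\Phi = \Phi^{\cP}_{X \to X}$ is a Fourier--Mukai transform, with quasi-inverse $\Psi = \Phi^{\cQ}_{X \to X}$ for $\cQ = \bR\HHom_{\cO_{X \times X}}(\cP, \pi_X^\ast \omega_X)[\dim X]$.

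An alternative, more self-contained route avoids the moduli-theoretic language and checks the equivalence directly. By cohomology-and-base-change and flatness of $\cP$ over $S$, fully faithfulness of $\Phi$ (via Bridgeland's strongly-simple criterion applied to the skyscrapers $\cO_y$, $y \in X$) reduces to the corresponding orthogonality on a single fiber $X_s$, where $\cP|_{X_s \times X_s}$ is the Poincar\'e sheaf of that genus-$1$ curve; on smooth fibers this is Mukai's elliptic-curve transform, and the upgrade to an equivalence is free because $\omega_X$ restricts trivially to each fiber, so the Serre-functor obstruction to promoting fully faithful to equivalence vanishes.

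Either way, the main obstacle is the singular integral fibers: for a nodal or cuspidal cubic $X_s$ one must know that $\overline{\mathrm{Jac}}^{\,0}(X_s) \cong X_s$ and that the induced integral functor is still an equivalence even though the parametrised rank-$1$ sheaves are torsion-free but not locally free. The orthogonality and $\mathrm{WIT}$-type computations that are routine on a smooth elliptic curve must be redone with these non-invertible sheaves, and only after this fiberwise fact is secured does the base-change bookkeeping --- checking that forming the convolution kernel commutes with restriction along $\iota_s$ and that the resulting diagonal kernels glue to $\cO_\Delta$ --- assemble the fiberwise equivalences into the global statement.
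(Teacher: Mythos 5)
This statement is not proved in the paper at all: it is quoted as background, with the citation \cite[Theorem 6.18]{FMNT} standing in for the proof. So there is no internal argument to compare yours against; the only fair assessment is of your proposal on its own terms, and as an outline it is correct. Your first route is exactly the derivation that the paper's layout invites: the Bridgeland--Maciocia theorem \cite[Theorem 1.2]{BM04} is quoted immediately beforehand, and a Weierstrass fibration fits into it by taking $Y$ to be the relative compactified Jacobian of rank-one, degree-zero torsion-free sheaves on the fibers, identified with $X$ via the Abel map $x \mapsto \cI_x \otimes \cO_{X_s}(\sigma(s))$; stability of rank-one torsion-free sheaves on integral curves, equidimensionality of $\hat{\pi}$, and $\dim Y = \dim X$ are then immediate. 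Your second route (Bridgeland's strongly simple criterion checked fiberwise, plus triviality of $\omega_X$ on fibers, which holds because $\omega_{X/S}$ is a pullback from $S$ for a Weierstrass fibration) is in fact closer to how \cite{FMNT} actually argues: they reduce the global equivalence to the restriction of the kernel to each fiber and then treat the singular fibers by their theory of compactified Jacobians.

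Two points in your sketch deserve sharpening. First, fineness does \emph{not} follow from the absence of strictly semistable sheaves alone; that removes only one obstruction, and in general a Brauer-type obstruction to a universal family remains. What actually saves the day in the rank-one-with-section situation is that the universal sheaf can be written down explicitly on $X \times_S X$ as a twist of the ideal sheaf of the relative diagonal by pullbacks of $\cO_X(\sigma(S))$ from both factors and a line bundle from $S$; this simultaneously proves fineness and identifies the universal sheaf with the relative Poincar\'e sheaf $\cP$, up to tensoring by a line bundle pulled back from $Y$, which is harmless since it only composes $\Phi$ with an autoequivalence of $D^b(Y)$. Second, as you yourself flag in the final paragraph, the isomorphism $X_s \cong \overline{\mathrm{Jac}}^{\,0}(X_s)$ for \emph{singular} integral Gorenstein genus-one fibers and the fiberwise equivalence with non-locally-free rank-one kernels are the genuine mathematical content here; your proposal correctly locates the difficulty but delegates it to the same source the paper cites, which is acceptable for a quoted background theorem but means your argument is a reduction, not a self-contained proof.
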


The advantage of having a Fourier-Mukai type automorphism $\Phi$ on $D^b(X)$ is that we can use it to study how various subcategories change under $\Phi$. It would be nice to see that different structures on $D^b(X)$ are related by this automorphism. The case that we are mostly interested in, namely the trivial fibration, is clearly a Weierstrass fibration. Understanding the action of the automorphism $\Phi$ induced by the relative Poincar\'e bundle $\cP$ on $D^b(X)$ will be our major goal in this paper.

A key property that often comes up in the study of Fourier-Mukai transforms is the so-called WIT property. Let $\Phi: D^b(Y) \to D^b(X)$ be any Fourier-Mukai transform. A complex  $E \in D^b(Y)$ is said to be $\Phi$-\emph{WIT$_i$} if $\Phi(E) = F[-i] \in D^b(X)$ for some $F \in \coh(X)$ and $i \in \bZ$; in other words, the image of $E$ under the Fourier-Mukai transform $\Phi$ is a sheaf sitting at  degree $i$.  When $E \in D^b(Y)$ is a $\Phi$-WIT$_i$ complex, we write $\wh{E}$ to denote the coherent sheaf $F=\Phi (E) [i]$, i.e.\ $\Phi (E) \cong \wh{E} [-i]$.  As every object in $D^b(Y)$ is a finite extension of coherent sheaves and their shifts, understanding lots of examples of sheaves in $\coh(Y)$ satisfying WIT properties in various degrees can usually help us determine the image of a subcategory of $D^b(Y)$ under $\Phi$.

We introduce the following extra notations that will be used later.
\begin{itemize}
\item For any object $E \in D^b(Y)$, we write $\Phi^i(E)$ for the cohomology sheaf $H^i(\Phi(E))$ with respect to the standard t-structure.
\item We write $W_{i,X}$ to denote the category of $\Psi$-WIT$_i$ sheaves.
\item We write $W_{i,X}'$ to denote the category of coherent sheaves $E$ on $X$ such that, for a general closed point $s \in S$, the image of $E|_s$ under $\Psi_s$ (the base change of $\Psi$ under the closed immersion $\{s\}\hookrightarrow S$) is a sheaf sitting at degree $i$.
\item We write $\mathfrak{T}_X$ to denote the extension closure $\langle W_{0,X}', \Phi (\{\Coh^{\leq 0}(Y_s)\}^\uparrow \rangle$.
\item We write $\mathfrak{T}'_X$ to denote the extension closure $\langle W_{0,X}', \Phi (W_{0,Y} \cap \Coh^{\leq n-1}(Y) )\rangle$ where $n$ equals $\dim X$ (which equal $\dim Y$).
\end{itemize}

\subsection{Torsion classes}

Another important notion that shows up quite often in our discussion is a torsion pair. For any abelian category $\cA$, a \emph{torsion pair} $(\cT, \cF)$ in $\cA$ is a pair of full subcategories of $\cA$, satisfying the following two conditions
\begin{enumerate}
\item Every object $E \in \cA$ fits in a short exact sequence $ 0 \to T \to E \to F \to 0$ in $\cA$ where $T \in \cT$ and $F \in \cF$;
\item For any $T \in \cT$ and $F \in \cF$, we have $\Hom(T,F)=0$.
\end{enumerate}
For such a torsion pair $(\cT, \cF)$ in $\cA$, we refer to $\cT$ as the torsion class of the pair, and $\cF$ as the torsion-free class of the pair. We say a full subcategory $\cC$ of $\cA$ is a \emph{torsion class} if it is the torsion class of a torsion pair in $\cA$. We will need the following useful criterion to recognize torsion classes:

\begin{lemma}
\label{lem:torsion-pair}
\cite[Lemma 1.1.3]{Pol}
Let $\cA$ be a noetherian abelian category. If a full subcategory $\cT$ is closed under quotients and extensions, then $\cT$ is a torsion class.
\end{lemma}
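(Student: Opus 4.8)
The plan is to produce the torsion-free class by right orthogonality and then manufacture the required short exact sequences using the noetherian hypothesis. Define
\[
\cF = \{ F \in \cA : \Hom(T,F) = 0 \text{ for all } T \in \cT \}.
\]
With this choice, condition (2) in the definition of a torsion pair holds automatically, so the entire content is to verify condition (1): every $E \in \cA$ sits in a short exact sequence $0 \to T \to E \to F \to 0$ with $T \in \cT$ and $F \in \cF$. Before doing so I would record two elementary closure properties of $\cT$. Since $\cT$ is closed under quotients it contains $0$ (the quotient of any object of $\cT$ by itself). Moreover $\cT$ is closed under finite direct sums: the split sequence $0 \to T_1 \to T_1 \oplus T_2 \to T_2 \to 0$ exhibits $T_1 \oplus T_2$ as an extension, hence it lies in $\cT$. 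Consequently $\cT$ is closed under sums of subobjects: if $T_1, T_2 \subseteq E$ both lie in $\cT$, then $T_1 + T_2$ is the image of $T_1 \oplus T_2 \to E$, so it is a quotient of an object of $\cT$ and therefore lies in $\cT$.

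Next, for a fixed object $E$, I would consider the family $\Sigma$ of subobjects of $E$ lying in $\cT$. This family is nonempty (it contains $0$) and, by the previous paragraph, closed under the formation of finite sums inside $E$. Because $\cA$ is noetherian, every nonempty family of subobjects of $E$ has a maximal element; let $T \in \Sigma$ be such a maximal subobject, and set $F = E/T$ with quotient map $q : E \to F$. It remains only to show $F \in \cF$.

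To see this, suppose toward a contradiction that some $T' \in \cT$ admits a nonzero morphism $\phi : T' \to F$. Its image $I = \image \phi \subseteq F$ is nonzero and is a quotient of $T'$, hence $I \in \cT$ by closure under quotients. Let $\tilde T = q^{-1}(I) \subseteq E$ be the preimage of $I$; then there is a short exact sequence $0 \to T \to \tilde T \to I \to 0$, and since both $T$ and $I$ lie in $\cT$, closure under extensions forces $\tilde T \in \cT$. But $I \neq 0$ means $\tilde T$ strictly contains $T$, contradicting the maximality of $T$ in $\Sigma$. Hence no such $\phi$ exists, so $\Hom(T', F) = 0$ for every $T' \in \cT$, i.e.\ $F \in \cF$, completing condition (1).

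The step I expect to be the crux is the existence of the maximal subobject $T$: this is exactly where the noetherian hypothesis is indispensable, since without an ascending chain condition one could have an increasing chain of $\cT$-subobjects with no top element and no guarantee that its ``union'' lies in $\cT$. The maximality argument in the final paragraph is then the conceptual heart, as it converts maximality into the orthogonality property defining $\cF$; the closure computations in the first paragraph are routine but must be in place for the extension $0 \to T \to \tilde T \to I \to 0$ to stay inside $\cT$.
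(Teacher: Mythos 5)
Your proof is correct, and it is essentially the argument of the cited source: the paper itself gives no proof of this lemma, deferring entirely to Polishchuk's Lemma 1.1.3, whose proof likewise takes the right orthogonal as the torsion-free class, extracts a maximal $\cT$-subobject via the noetherian hypothesis, and uses closure under quotients and extensions to show the quotient lies in the orthogonal. The only cosmetic remark is that your closure-under-sums observation is not actually needed (maximality alone drives the contradiction); it would matter only if you wanted the maximal $\cT$-subobject to be the unique largest one.
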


Under the same assumption, we write
    \[
      \cC^\circ := \{ E \in \cA \mid \Hom_{\cA}(C,E)=0 \text{ for all } C \in \cC\}.
    \]
Then the torsion pair in Lemma \ref{lem:torsion-pair} is given by $(\cC, \cC^\circ)$.

Whenever we have a torsion pair $(\mathcal T, \mathcal F)$ in an abelian category $\cA$, there is a corresponding t-structure $(D^{\leq 0}, D^{\geq 0})$ on the derived category $D(\mathcal A)$ given by
    \begin{align*}
      D^{\leq 0} &:= \{ E \in D(\mathcal A) \mid H^0(E) \in \mathcal T, H^i(E)=0 \text{ for all $i>0$}\},\\
      D^{\geq 0} &:= \{ E \in D(\mathcal A) \mid H^{-1}(E) \in \mathcal F, H^i(E)=0 \text{ for all $i<-1$ }\}.
    \end{align*}
The heart of this t-structure is given by
$$\cA^\# = \{E \in D(\mathcal A) \mid H^{-1}(E) \in \mathcal F, H^0(E) \in \mathcal T, H^i(E)=0 \text{ for all $i\neq -1,0$}\},$$
It is called the \emph{tilted heart} with respect to the torsion pair $(\cT,\cF)$.

\section{Slope-like functions}\label{sec-slf}

\subsection{An alternative to the relative HN filtration}\label{subsec-alt}

In this section, we consider a pair of dual elliptic fibrations $\pi : X \to S$ and $\wh{\pi} : Y \to S$, and introduce a filtration for coherent sheaves on $X$ that replaces the use of relative HN filtrations with respect to $\pi$.  We assume that, for any closed point $x$ on a general (smooth) fiber of $\pi$, the sheaf $\Psi (\OO_x)$ is a line bundle of degree zero on the fiber of $\wh{\pi}$ over $\pi (x)$, and similarly for $\Phi$.




In \cite{Lo11}, we defined $\mathfrak T_X$ as
\[
  \mathfrak T_X := \langle W_{0,X}', \Phi (\{\Coh^{\leq 0}(Y_s)\}^\uparrow \rangle.
\]
We now modify the definition of $\mathfrak T_X$ a little, and set (where $d := \mathrm{dim}\, X$)
\begin{equation}\label{eq48}
  \mathfrak T_X' := \langle W_{0,X}', \Phi (W_{0,Y} \cap \Coh^{\leq d-1}(Y) )\rangle.
\end{equation}

\begin{remark}\label{remark1-1}
Note that, the torsion sheaves on $Y$ (i.e.\ objects in $\Coh^{\leq d-1}(Y)$ when $Y$ is smooth projective) of dimension $d$ are precisely the coherent sheaves $E$ on $Y$ such that $F|_s$ is a 0-dimensional sheaf for a general closed point $s \in S$.
\end{remark}

\begin{lemma}\label{lemma16}
We have
\begin{itemize}
\item[(i)] $\mathfrak T_X'$ is a torsion class in $\Coh (X)$.
\item[(ii)] When $X$ is an elliptic surface, the categories $\mathfrak T_X$ and $\mathfrak T_X'$ coincide.
\item[(iii)] $\mathfrak T_X'$ is also the category of all coherent sheaves $E$ on $X$ such that, for a general closed point $s \in S$, all the HN factors of $E|_s$ have $\mu \geq 0$.
\end{itemize}
\end{lemma}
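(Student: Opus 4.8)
The plan is to prove the three parts in the order (iii), (i), (ii): the intrinsic fibrewise description in (iii) makes (i) immediate via Lemma \ref{lem:torsion-pair} and is also the right tool for the comparison in (ii). The enabling input is the behaviour of the relative transform on fibres, together with base change (derived restriction to a general fibre commutes with $\Phi$ and $\Psi$, since the kernel restricts to the fibrewise kernel). Over a general closed point $s\in S$ the fibre $X_s$ is a smooth genus-$1$ curve, and the classical transform obeys the usual dichotomy: a coherent sheaf on $X_s$ is $\Psi_s$-WIT$_0$ precisely when every Harder--Narasimhan factor has positive slope (torsion sheaves counted as slope $+\infty$), and $\Phi_s$ carries a $0$-dimensional sheaf on $Y_s$ to a slope-$0$ semistable bundle on $X_s$. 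Writing $\mu_{\min}(E|_s)$ for the least HN slope of $E|_s$, this yields the two descriptions I will use: $W_{0,X}'$ is the category of $E$ with $\mu_{\min}(E|_s)>0$ for general $s$, while every object of $\Phi(W_{0,Y}\cap \Coh^{\leq d-1}(Y))$ restricts on the general fibre to a slope-$0$ semistable sheaf (here I use Remark \ref{remark1-1} to identify $W_{0,Y}\cap\Coh^{\leq d-1}(Y)$ with the torsion sheaves on $Y$ that are $0$-dimensional on the general fibre).

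For (iii) let $\cC$ denote the right-hand category, the sheaves with $\mu_{\min}(E|_s)\geq 0$ for general $s$. The inclusion $\mathfrak{T}_X'\subseteq \cC$ follows because both generating subcategories lie in $\cC$ (slopes $>0$ and slope $0$ are both $\geq 0$) and $\cC$ is closed under extensions (restricting a short exact sequence to a general fibre gives $\mu_{\min}(E|_s)\geq\min(\mu_{\min}(A|_s),\mu_{\min}(B|_s))$). For the reverse inclusion, given $E\in\cC$ I apply Proposition \ref{pro6} to obtain a relative torsion-pair sequence $0\to A\to E\to B\to 0$ with $A$ the fibrewise positive part, so that $A\in W_{0,X}'$ and $B|_s$ is the slope-$0$ semistable part of $E|_s$ for general $s$. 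Since $B|_s$ is then $\Psi_s$-WIT of a single degree, $\Psi(B)$ is (a shift of) a sheaf $G$ that is torsion and $0$-dimensional on the general fibre, with $\Phi(G)\cong B$; hence $B\in\Phi(W_{0,Y}\cap\Coh^{\leq d-1}(Y))$ and $E\in\mathfrak{T}_X'$. Part (i) is then formal: $\Coh(X)$ is noetherian, and $\cC=\mathfrak{T}_X'$ is closed under extensions and under quotients (restriction to a general fibre is right exact and $\mu_{\min}$ only increases under quotients), so Lemma \ref{lem:torsion-pair} shows it is a torsion class.

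For (ii) take $d=2$. One inclusion is easy: a sheaf in $\{\Coh^{\leq 0}(Y_s)\}^\uparrow$ is torsion and $0$-dimensional on every fibre, hence $\Phi$-WIT$_0$, so $\{\Coh^{\leq 0}(Y_s)\}^\uparrow\subseteq W_{0,Y}\cap\Coh^{\leq 1}(Y)$ and therefore $\mathfrak{T}_X\subseteq\mathfrak{T}_X'$. For the reverse inclusion it suffices to show $\Phi(F)\in\mathfrak{T}_X$ for every torsion $\Phi$-WIT$_0$ sheaf $F$ on $Y$. Using the torsion pair on $\Coh^{\leq 1}(Y)$ whose torsion class is the fibre sheaves, write $0\to F_{\mathrm v}\to F\to F_{\mathrm h}\to 0$ with $F_{\mathrm v}$ the maximal fibre-sheaf subsheaf and $F_{\mathrm h}\in\{\Coh^{\leq 0}(Y_s)\}^\uparrow$. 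Applying $\Phi$, the contribution of $F_{\mathrm v}$ is supported on finitely many fibres, so its cohomology sheaves are fibre sheaves on $X$, which lie in $W_{0,X}'$ because they vanish on a general fibre, whereas the contribution of $F_{\mathrm h}$ lands in $\Phi(\{\Coh^{\leq 0}(Y_s)\}^\uparrow)$; assembling the long exact sequence should then place $\Phi(F)$ in $\mathfrak{T}_X$.

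The main obstacle is precisely this assembly. Since $F_{\mathrm v}$ need not be $\Phi$-WIT$_0$ (a fibre sheaf may contain a slope-$0$ subobject), the connecting map $\Phi(F_{\mathrm h})\to\Phi^1(F_{\mathrm v})$ can be nonzero, and one is left to place the kernel $K=\ker(\Phi(F_{\mathrm h})\to\Phi^1(F_{\mathrm v}))$ in $\mathfrak{T}_X$; as $K$ is only a \emph{subobject} of an object of $\mathfrak{T}_X$ and torsion classes are not closed under subobjects, this does not follow formally. I would resolve it by transforming back: $H^0(\Psi(K))$ embeds into $F_{\mathrm h}$, and since $\{\Coh^{\leq 0}(Y_s)\}^\uparrow$ is closed under subobjects, $H^0(\Psi(K))$ again lies there, while $H^1(\Psi(K))$ is a fibre sheaf; an induction on the length of the fibre-sheaf part then shows $K\in\mathfrak{T}_X$. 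Alternatively, one can prove directly that $\mathfrak{T}_X$ also coincides with $\cC$ when $X$ is a surface by rerunning the argument of (iii), whereupon (ii) is immediate from the characterisation.
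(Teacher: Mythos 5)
Your overall ordering (prove (iii) first, then get (i) formally from the fibrewise characterisation, since that category is visibly closed under quotients and extensions) is sound, and as a route to (i) it is a reasonable alternative to the paper's. But the execution of (iii) has two genuine problems. First, you invoke Proposition \ref{pro6} to produce the subsheaf $A \in W_{0,X}'$ with fibrewise slope-$0$ semistable, torsion-free quotient. This is circular: Proposition \ref{pro6} is proved \emph{after} Lemma \ref{lemma16}, its proof begins by splitting $E$ with the torsion pair $(\mathfrak T_X', (\mathfrak T_X')^\circ)$ (i.e.\ part (i)) and invokes part (iii) for its statement (b)(iii); moreover its conclusion (a)(ii) already asserts that the middle factor lies in $\Phi(W_{0,Y}\cap\Coh^{\leq d-1}(Y))$, which is essentially what you are trying to prove. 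The non-circular source for your decomposition is the relative Harder--Narasimhan filtration with respect to $\pi$ (\cite[Theorem 2.3.2]{HL10}), which is what the paper uses. Second, your step ``since $B|_s$ is $\Psi_s$-WIT of a single degree for general $s$, $\Psi(B)$ is (a shift of) a sheaf'' is a real gap: a WIT condition on \emph{general} fibres does not formally globalise, since nothing constrains the behaviour over special points of $S$. The paper closes exactly this gap by recording that the quotient is torsion-free (a property coming out of the relative HN construction) and citing \cite[Lemma 3.18(ii)]{Lo11}, which says a torsion-free sheaf in $W_{1,X}'$ lies in $W_{1,X}$. Without torsion-freeness and that input, your conclusion $B\in\Phi(W_{0,Y}\cap\Coh^{\leq d-1}(Y))$ is unsupported.

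For (ii), you chose the decomposition with the fibre part as the subobject, and you correctly identify the resulting obstruction (the connecting map into $\Phi^1(F_{\mathrm v})$), but neither of your proposed repairs works. There is no finite ``length of the fibre-sheaf part'' to induct on (these fibre sheaves are $1$-dimensional), and if you track your transform-back construction you will find that $H^\bullet(\Psi(K))$ is an \emph{extension} of a subsheaf of $F_{\mathrm h}$ by a fibre sheaf, not a subsheaf of $F_{\mathrm h}$; in particular, when the fibre-sheaf quotient $Q$ of $\Phi(F_{\mathrm h})$ is itself $\Phi$-WIT$_0$, the procedure reproduces the original kernel problem up to $\iota^*$, so it never terminates. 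Your alternative (rerun (iii) for $\mathfrak T_X$) also fails as stated: the sheaf it produces is only $0$-dimensional on \emph{general} fibres, i.e.\ lies in $W_{0,Y}\cap\Coh^{\leq 1}(Y)$, not in $\{\Coh^{\leq 0}(Y_s)\}^\uparrow$, which constrains \emph{every} fibre -- so you are back to comparing the two generating classes. The correct fix is to swap the roles in your decomposition: $\{\Coh^{\leq 0}(Y_s)\}^\uparrow$ is itself a torsion class (closed under quotients and extensions, then Lemma \ref{lem:torsion-pair}), so write $0 \to F' \to F \to F'' \to 0$ with $F' \in \{\Coh^{\leq 0}(Y_s)\}^\uparrow$ maximal; then $F''$ is a fibre sheaf and, being a \emph{quotient} of the $\Phi$-WIT$_0$ sheaf $F$, is automatically $\Phi$-WIT$_0$. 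Now both pieces are WIT$_0$, so $\Phi$ takes the sequence to a short exact sequence of sheaves, exhibiting $\Phi(F)$ as an extension of the fibre sheaf $\Phi(F'') \in W_{0,X}'$ by $\Phi(F') \in \Phi(\{\Coh^{\leq 0}(Y_s)\}^\uparrow)$, hence in $\mathfrak T_X$. This is precisely \cite[Lemma 3.15]{Lo11}, the paper's key input for (ii).
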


\begin{proof}
(i) This follows from the same argument as in the proof of \cite[Lemma 3.19(iii)]{Lo11}, by replacing the category $\{\Coh^{\leq 0}(Y_s)\}^\uparrow$ with $W_{0,Y} \cap \Coh^{\leq 2}(Y)$.

(ii) Suppose $X,Y$ are dual elliptic surfaces.  By \cite[Lemma 3.15]{Lo11},  any $F \in W_{0,Y} \cap \Coh^{\leq 1}(Y)$ fits in a short exact sequence in $\Coh (Y)$ of the form
\[
0 \to F' \to F \to F'' \to 0
\]
where $F' \in \{\Coh^{\leq 0}(Y_s)\}^\uparrow$ and $F''$ is a $\Phi$-WIT$_0$ fiber sheaf, which implies $\mathfrak T_X' \subseteq \mathfrak T_X$.  That $\mathfrak T_X \subseteq \mathfrak T_X'$  follows from $\{\Coh^{\leq 0}(Y_s)\}^\uparrow \subseteq W_{0,Y}$ \cite[Remark 3.14]{Lo11}, thus proving (ii).

(iii) Given any object $F \in \mathfrak T_X'$, we know from \cite[Corollary 3.29]{FMNT} that, for a general closed point $s \in S$, all the HN factors of $F|_s$ have $\mu \geq 0$.

On the other hand, take any coherent sheaf $F$ such that all the HN factors of $F|_s$ have $\mu \geq 0$ for a general closed point $s \in S$.  By the existence of the relative HN filtration of $F$ with respect to $\pi$ \cite[Theorem 2.3.2]{HL10}, we can find a (possibly zero) subsheaf $G \subseteq F$ satisfying the following properties:
\begin{itemize}
\item[(a)] all the HN factors of $G|_s$ have $\mu >0$ for a general closed point $s \in S$ (and so $G \in W_{0,X}'$);
\item[(b)] $F/G$ is torsion-free;
\item[(c)] $(F/G)|_s$ is $\mu$-semistable with $\mu=0$ for a general closed point $s \in S$.
\end{itemize}
Since $F/G$ lies in $W_{1,X}'$ and is torsion-free, it lies in $W_{1,X}$ by \cite[Lemma 3.18(ii)]{Lo11}.  Property (c) now implies that  $(\wh{F/G})|_s$ is 0-dimensional for a general closed point $s \in S$  \cite[Corollary 3.29]{FMNT}, and so  $\wh{F/G} \in \Coh^{\leq d-1}(Y)$, implying $F/G \in \mathfrak T_X'$.  This completes the proof of (iii).
\end{proof}

\begin{proposition}\label{pro6}
Let $\pi : X \to S$ and $\hat{\pi} : Y \to S$ be dual elliptic fibrations of dimension $d$ such that the Fourier-Mukai kernels of $\Phi$ and $\Psi$ are both coherent sheaves. Then
\begin{itemize}
\item[(a)] Any coherent sheaf $E$  on $X$ has a filtration
\begin{equation}\label{eq44}
  G_0 \subseteq G_1 \subseteq G_2 := E
\end{equation}
  in $\Coh (X)$ where the subfactors satisfy:
   \begin{itemize}
   \item[(i)] $G_0 \in W_{0,X}'$;
   \item[(ii)] $G_1/G_0 \in \mathcal F_X \cap \Phi (W_{0,Y} \cap \Coh^{\leq d-1}(Y))$, and so $G_1 \in \mathfrak T_X'$;
   \item[(iii)] $G_2/G_1  \in (\mathfrak T_X')^\circ$.  In particular, $G_2/G_1$ lies in $W_{1,X}$ and  is torsion-free.
   \end{itemize}
\item[(b)] Suppose, in addition, that $\pi, \hat{\pi}$ are dual elliptic fibrations satisfying the condition laid out at the start of Section \ref{subsec-alt}. With notation as in part (a), we have:
\begin{itemize}
\item[(i)] For a general closed point $s \in S$, $\mu >0$ for any HN factor of $G_0|_s$;
\item[(ii)] For a general closed point $s \in S$, $\mu =0$ for any HN factor of $(G_1/G_0)|_s$;
\item[(iii)] For a general closed point $s \in S$, $\mu <0$ for any HN factor of $(G_2/G_1)|_s$.
\end{itemize}
\end{itemize}
\end{proposition}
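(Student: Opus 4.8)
The plan is to read \eqref{eq44} as the slope decomposition of $E$ relative to $\pi$ into its $\mu>0$, $\mu=0$ and $\mu<0$ pieces, but to cut it using the torsion pair attached to the torsion class $\mathfrak T_X'$ so that the sign statements in part (b) fall out of the construction itself. First I would invoke Lemma \ref{lemma16}(i) to obtain the torsion pair $(\mathfrak T_X',(\mathfrak T_X')^\circ)$ in $\Coh(X)$, and take $G_1\subseteq E$ to be the torsion subobject of $E$ for this pair. Then $G_1\in\mathfrak T_X'$ and $G_2/G_1=E/G_1\in(\mathfrak T_X')^\circ$ by construction, which is exactly (a)(iii).

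Next I would analyse $N:=G_2/G_1\in(\mathfrak T_X')^\circ$, establishing the remaining claims of (a)(iii) together with all of (b)(iii). Any torsion subsheaf of $N$ lies in $\Coh^{\leq d-1}(X)$ and restricts to a $0$-dimensional (or zero) sheaf on a general fiber, so by Lemma \ref{lemma16}(iii) it lies in $\mathfrak T_X'$; the torsion pair then forces it to vanish, so $N$ is torsion-free. For the sign, I would take the relative HN filtration of $N$ with respect to $\pi$ (\cite[Theorem 2.3.2]{HL10}) and let $N'\subseteq N$ be the subsheaf whose generic fiber is the $\mu\geq 0$ part; Lemma \ref{lemma16}(iii) gives $N'\in\mathfrak T_X'$, so the inclusion $N'\hookrightarrow N$ is zero and $N'=0$, whence every HN factor of $N|_s$ has $\mu<0$ for general $s$, proving (b)(iii). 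Being torsion-free with negative generic-fiber slopes, $N$ is fiberwise $\Psi_s$-WIT$_1$ (\cite[Corollary 3.29]{FMNT}), so $N\in W_{1,X}'$, and torsion-freeness then gives $N\in W_{1,X}$ by \cite[Lemma 3.18(ii)]{Lo11}.

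It remains to split $G_1$. Since $G_1\in\mathfrak T_X'$, Lemma \ref{lemma16}(iii) tells us all HN factors of $G_1|_s$ have $\mu\geq 0$ for general $s$, so I would run the construction from the proof of Lemma \ref{lemma16}(iii) on $G_1$ to produce $G_0\subseteq G_1$ with: all HN factors of $G_0|_s$ of slope $>0$ (so $G_0\in W_{0,X}'$, giving (a)(i) and (b)(i)); $G_1/G_0$ torsion-free; and $(G_1/G_0)|_s$ $\mu$-semistable of slope $0$ for general $s$ (giving (b)(ii)). Torsion-freeness yields $G_1/G_0\in\mathcal F_X$, while the slope-$0$ condition yields $G_1/G_0\in W_{1,X}'\cap\mathcal F_X\subseteq W_{1,X}$ with $\widehat{G_1/G_0}|_s$ $0$-dimensional, i.e.\ $\widehat{G_1/G_0}\in\Coh^{\leq d-1}(Y)$ (\cite[Corollary 3.29]{FMNT}). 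By the inversion between $\Phi$ and $\Psi$, $\widehat{G_1/G_0}$ is $\Phi$-WIT$_0$ with $\Phi(\widehat{G_1/G_0})\cong G_1/G_0$, so $G_1/G_0\in\Phi(W_{0,Y}\cap\Coh^{\leq d-1}(Y))$, which is (a)(ii); the membership $G_1\in\mathfrak T_X'$ then holds (already from the first step, or by extension-closedness of the torsion class).

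The step I expect to be the main obstacle is the passage from \emph{generic-over-}$S$ fiberwise data to \emph{global} statements: one must be sure that $N'$ and $G_0$, extracted from a relative HN filtration that a priori lives only over a dense open $U\subseteq S$, are honest global subsheaves of $N$ and $G_1$ (this is where saturation and the properness underlying \cite[Theorem 2.3.2]{HL10} enter), and that the fiberwise $\Psi_s$-WIT dichotomy globalises correctly to membership in $W_{0,X}'$, $W_{1,X}'$ and hence $W_{1,X}$. A secondary point is to track hypotheses carefully: the torsion-freeness and $W_{1,X}$-membership in part (a) are structural and persist under the weaker coherent-kernel hypothesis, whereas the translation into the actual slope signs of part (b) is precisely where the normalisation of Section \ref{subsec-alt} (that $\Psi(\OO_x)$ and $\Phi(\OO_x)$ are degree-$0$ line bundles on the dual fiber) is indispensable.
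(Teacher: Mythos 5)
Your proof is correct and follows the paper's own strategy: the first cut is identical (the torsion pair $(\mathfrak T_X',(\mathfrak T_X')^\circ)$ supplied by Lemma \ref{lemma16}(i)), the fibrewise translations rest on the same inputs (\cite[Corollary 3.29]{FMNT}, the relative HN filtration of \cite[Theorem 2.3.2]{HL10}, and \cite[Lemma 3.18(ii)]{Lo11}), and part (b) is deduced exactly as in the paper. The only minor variation is the second cut: the paper takes $G_0$ to be the maximal subsheaf of $G_1$ in the torsion class $W_{0,X}'$ and then reads off the generic-fibre slopes of $G_1/G_0$ from the membership $G_1/G_0\in\mathfrak T_X'\cap(W_{0,X}')^\circ$, whereas you extract $G_0$ from the relative-HN construction appearing in the proof of Lemma \ref{lemma16}(iii) so that those slope properties hold by construction --- the two routes use the same ingredients and yield a filtration with the same properties.
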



\begin{proof}
Take any $E \in \Coh (X)$, and set $G_2 := E$.  First, consider the exact sequence in $\Coh (X)$
\[
0 \to G_1 \to G_2 \to G_2/G_1 \to 0
\]
where $G_1 \in \mathfrak T_X'$ and $G_2/G_1 \in (\mathfrak T_X')^\circ$.
Then consider the exact sequence in $\Coh (X)$
\[
 0 \to G_0 \to G_1 \to G_1/G_0 \to 0
\]
where $G_0 \in W_{0,X}'$ and $G_1/G_0 \in (W_{0,X}')^\circ$.

We now verify that $G_1/G_0 \in \Phi (W_{0,Y} \cap \Coh^{\leq d-1}(Y))$.  Write $F := G_1/G_0$; note that  $F \in \mathfrak T_X' \cap (W_{0,X}')^\circ$.  Then $F$ is torsion-free and $\Psi$-WIT$_1$, since $W_{0,X}'$ contains all the $\Psi$-WIT$_0$ sheaves and torsion sheaves.  Also, from the definition of $\mathfrak T_X'$, we have that for a general closed point $s \in S$, all the HN factors of $F|_s$ have $\mu \geq 0$.  That $F \in (W_{0,X}')^\circ$ then  implies that for a general closed point $s \in S$, the restriction $F|_s$ is in fact $\mu$-semistable with $\mu=0$.  Hence $\wh{F}|_s$ is 0-dimensional for a general closed point $s \in S$ \cite[Corollary 3.29]{FMNT}, i.e.\ $\wh{F} \in \Coh^{\leq d-1}(Y)$.  Thus $F \in \Phi (W_{0,Y} \cap \Coh^{\leq d-1}(Y))$ as claimed.  That $G_2/G_1$ lies in $W_{1,X}$ and is torsion-free follows from $\mathfrak T_X'$ containing $W_{0,X}$ and all the torsion sheaves.  This completes the proof of part (a).

Parts (b)(i) and (b)(ii) follow immediately from \cite[Corollary 3.29]{FMNT}.  To see part (b)(iii), note that we have $G_2/G_1 \in (\mathfrak T_X')^\circ$ from part (a)(iii).  Lemma \ref{lemma16}(iii) then forces all the HN factors of $(G_2/G_1)|_s$ to have $\mu <0$ for a general closed point $s \in S$, giving us (b)(iii).
%
%
%
%
\end{proof}

\subsection{HN filtrations in abelian categories with slope-like functions}\label{sec-slopelike}

Give a noetherian abelian category $\cA$, we say that $\cA$ has a slope-like function $\mu$ if we have a pair of group homomorphisms $C_0 : K(\cA) \to \mathbb{Z}$, $C_1  : K(\cA) \to \mathbb{R}$ satisfying
\begin{itemize}
\item $C_0 (E) \geq 0$ for any $E \in \cA$;
\item if an object $E \in \cA$ satisfies $C_0 (E)=0$, then  $C_1 (E) \geq 0$.
\end{itemize}
We then define the function $\mu : K(\cA) \to \mathbb{R} \cup \{+\infty\}$ by setting
\begin{equation}\label{paper13-def1}
  \mu (E) := \begin{cases} \frac{C_1(E)}{C_0(E)} &\text{ if $C_0(E) \neq 0$} \\
    +\infty &\text{ if $C_0 (E)=0$},
    \end{cases}
\end{equation}
and refer to $\mu$ as a slope-like function.  This  yields a notion of semistability akin to the usual notion of slope semistability for coherent sheaves on smooth projective varieties.
We can also define the following subcategories of $\cA$,
\begin{align}
  \cB_0 &:= \{ E \in \cA : C_0(E)=0 \}, \notag\\
  \cB_{01} &:= \{E \in \cA : C_0(E)=0=C_1(E)\}. \label{sd-eqC01}
\end{align}
 It is easy to see that $\cB_{01} \subseteq \cB_0$ and both are Serre subcategories of $\coh (X)$ (and, in particular, are torsion classes  in $\coh (X)$  by Lemma \ref{lem:torsion-pair}).

Given a slope-like function $\mu$ on a noetherian abelian category $\cA$, we define an object $E \in \cA$ to be $\mu$-semistable if, for any  $0 \neq F \subsetneq E$ in $\cA$, we have $\mu (F) \leq \mu (E)$.  Note that a $\mu$-semistable object $E \in \cA$ necessarily lies in $(\cB_0)^\circ$, and so satisfies $C_0(E)>0$ if $E \neq 0$.

We define a nonzero object $E \in \cA$ to be $\mu$-stable if, for any $0 \neq F \subsetneq E$ in $\cA$ with $0<C_0(F)<C_0(E)$, we have $\mu (F)<\mu (E)$.  It can be checked easily that $\mu$-stable objects are $\mu$-semistable.

On the other hand, for any object $E \in K(\cA)$, we can associated to it a polynomial
\[
  P(E)(m) := C_0(E)m + C_1(E),
\]
and an associated `reduced' polynomial
\[
  p(E)(m) := P(E)(m)/\alpha(E)
\]
where $\alpha (E)$ is the leading coefficient of $P(E)(m)$.  There is a natural ordering $\leq$ on the reduced  polynomials (see \cite[Section 1.2]{HL}): for objects $F, E \in \cA$, we write $p(F)\leq p(E)$ if $p(F)(m) \leq p(E)(m)$ for $m \gg 0$.

We then define an object $E \in (\mathcal B_0)^\circ$ to be $p$-semistable if, for any $0 \neq F \subsetneq E$ in $\cA$, we have $p(F) \leq p(E)$, and define it to be $p$-stable if we always have strict inequality.

Note that, when $E \in \cA$ satisfies $C_0(E) \neq 0$ (e.g.\ when $0\neq E \in (\cB_0)^\circ$), we have
\begin{equation}
p(E)(m) = m + \mu (E).
\end{equation}
It is then easy to see the following:
\begin{enumerate}
\item For objects in $(\cB_0)^\circ$, being $\mu$-semistable is equivalent to being $p$-semistable.
\item If an object $E \in (\cB_0)^\circ$ is $p$-stable, then it is $\mu$-stable.
\item If an object $E \in \cA$ is $\mu$-stable, then it lies in $(\cB_0)^\circ$ and is $p$-semistable.  An example to keep in mind for this case is the following: let $\cA = \coh (X)$ where $X$ is a smooth projective surface, let $C_0$ be the rank function and $C_1$ be the degree function (with respect to some polarisation on $X$) on $K(\cA)$.  For any 0-dimensional subscheme $Z \subset X$, if we write $I_Z$ for the corresponding ideal sheaf then $C_0(I_Z)=C_0(\OO_X)$ and $C_1(I_Z)=C_1(\OO_X)$.  Hence $\OO_X$ is $\mu$-stable but only $p$-semistable.
\end{enumerate}

Our notion of slope-like functions here is similar to, but not the same as, the notion of a stability structure on an abelian category as in Gorodentsev-Kuleshov-Rudakov \cite{GKR}.   For instance:
\begin{itemize}
\item In order to obtain Harder-Narasimhan filtrations, Gorodentsev-Kuleshov-Rudakov required that the abelian category $\cA$ be weakly artinian and weakly noetherian (see \cite[Theorem 2.2]{GKR}); in our definition of a slope-like function, we require our abelian category $\cA$ to be noetherian.
\item Given an abelian category $\cA$, we are able to define a slope-like function provided we have additive functions $C_0, C_1$ on $K(\cA)$ satisfying the properties \eqref{sd-eqC01}. Gorodentsev-Kuleshov-Rudakov has a similar formulation in \cite[Definition 2.3]{GKR}, but they require $C_0, C_1$ to be linearly independent, whereas we do not require it here.  The result is that a slope-like function may well give rise to very degenerate Harder-Narasimhan filtrations.  For instance, if $\cA$ is a noetherian abelian category with $C_0, C_1$ satisfying \eqref{sd-eqC01} and $C_0=C_1$, then for any $E \in \cA$ with $C_0(E) \neq 0$, we have $\mu (E)=1$.  That is, any object in $(\cB_0)^\circ$ is $\mu$-semistable, and so its HN filtration has exactly one term.
\end{itemize}


\begin{proposition}\label{pro5}
If a noetherian abelian category $\cA$ has a slope-like function $\mu$, then every object in $\cA$ has a unique Harder-Narasimhan filtration of the form
\begin{equation}\label{eq47}
 E_{01} \subseteq E_0 \subseteq E_1 \subseteq E_2 \subseteq \cdots \subseteq E_m = E
\end{equation}
for some positive integer $m$ where
\begin{itemize}
\item $E_{01} \in \cB_{01}$,
\item $E_0/E_{01} \in \cB_0 \cap (\cB_{01})^\circ$,
\item for $i \geq 1$, each $E_i/E_{i-1}$ lies in $(\cB_0)^\circ$, is $\mu$-semistable, and
\[
  \mu (E_1/E_0) > \mu (E_2/E_1) > \cdots > \mu (E_m/E_{m-1}).
\]
\end{itemize}
\end{proposition}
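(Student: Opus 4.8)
The plan is to split $E$ into three tiers matching the three regimes of the slope-like function: the ``doubly degenerate'' part in $\cB_{01}$, the ``$C_0$-degenerate'' part in $\cB_0$, and the genuinely finite-slope part in $(\cB_0)^\circ$, and to run the usual Harder--Narasimhan machine only on the last tier. First I would produce the bottom two steps using the torsion pairs supplied by the Serre subcategories $\cB_{01}\subseteq\cB_0$. Let $E_0$ be the maximal subobject of $E$ lying in $\cB_0$ (the torsion subobject for $(\cB_0,(\cB_0)^\circ)$), so that $E/E_0\in(\cB_0)^\circ$, and let $E_{01}$ be the maximal subobject of $E$ lying in $\cB_{01}$. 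Since $\cB_{01}\subseteq\cB_0$ we get $E_{01}\subseteq E_0$, and because $\cB_0$ is Serre the quotient $E_0/E_{01}$ lies in $\cB_0\cap(\cB_{01})^\circ$. This yields the first two bullet points and isolates the remaining work to $M:=E/E_0\in(\cB_0)^\circ$, on which $C_0>0$ for every nonzero subobject and $\mu$ is an honest (finite) slope.

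The heart of the argument is the existence of a maximal destabilizing subobject for $\mu$ on $M$. Here $\mu$ enjoys the seesaw property: for a short exact sequence in $(\cB_0)^\circ$, the slope of the middle term lies between the slopes of the sub and the quotient, since $C_0$ is additive and strictly positive. Granting that $\mu_{\max}(M):=\sup\{\mu(F):0\neq F\subseteq M\}$ is finite and attained, the subobjects of slope exactly $\mu_{\max}(M)$ are closed under sums (a direct computation with the seesaw identity, using $\mu(F\cap G)\leq\mu_{\max}(M)$), so by noetherianity their sum $M_1$ is the largest among them; $M_1$ is then $\mu$-semistable and maximally destabilizing. A short seesaw argument shows $M/M_1\in(\cB_0)^\circ$ again (any $\cB_0$-torsion or $\cB_{01}$-part of $M/M_1$ would pull back to a subobject contradicting the maximality of $M_1$), so the construction iterates; strict decrease of slopes and termination follow from maximality and the ascending chain condition. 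Pulling the resulting filtration $0=\bar M_0\subsetneq\cdots\subsetneq\bar M_m=M$ back along $E\to M$ produces $E_0\subseteq E_1\subseteq\cdots\subseteq E_m=E$ with $E_i/E_{i-1}=\bar M_i/\bar M_{i-1}$ semistable and slopes strictly decreasing, giving the third bullet.

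I expect the finiteness (and attainment) of $\mu_{\max}(M)$ to be the main obstacle, since this is exactly where Gorodentsev--Kuleshov--Rudakov invoke a weak-artinian hypothesis that we are trying to avoid. My intended route is an induction on the integer $C_0(M)$, which is legitimate because $C_0\in\bZ_{\geq 0}$ is bounded on subobjects. When $C_0(M)=1$ every proper subobject $F$ has $C_0(F)=C_0(M)$ (as $F\in(\cB_0)^\circ$ forces $C_0(F)\geq 1$), hence $M/F\in\cB_0$, $C_1(M/F)\geq 0$ and $\mu(F)\leq\mu(M)$; so $M$ is automatically semistable. For the inductive step, any subobject $F$ with $C_0(F)=C_0(M)$ again satisfies $\mu(F)\leq\mu(M)$, while a would-be sequence $F_n$ with $\mu(F_n)\to\infty$ may be taken with $C_0(F_n)=k<C_0(M)$ constant; writing their (noetherian-)stabilized sum as a finite sum $F_1+\cdots+F_K$ and mapping $\bigoplus_{i=1}^K F_i$ onto it, the pulled-back subobjects contradict the finiteness of $\mu_{\max}(\bigoplus_i F_i)=\max_i\mu_{\max}(F_i)$, which holds by the inductive hypothesis. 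Attainment of the supremum then follows from the sum-closedness of top-slope subobjects together with the discreteness of the invariants in the situations of interest.

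Finally, uniqueness is routine. The objects $E_{01}$ and $E_0$ are the unique torsion subobjects for the torsion pairs $(\cB_{01},(\cB_{01})^\circ)$ and $(\cB_0,(\cB_0)^\circ)$: one checks $\Hom(\cB_{01},E/E_{01})=0$ and $\Hom(\cB_0,E/E_0)=0$ by dévissage along the filtration, using that a map from an object of $\cB_0$ into a $\mu$-semistable object of $(\cB_0)^\circ$ has image both a $\cB_0$-quotient and a $(\cB_0)^\circ$-subobject, hence zero. The finite-slope factors are then pinned down by the standard argument that $\Hom$ vanishes from a semistable object to a semistable object of strictly smaller slope, which forces any two such filtrations of $M$ to agree term by term.
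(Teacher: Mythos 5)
Your overall architecture --- peeling off $E_{01} \subseteq E_0$ via the torsion pairs attached to the Serre subcategories $\cB_{01} \subseteq \cB_0$, then running an HN induction on $M := E/E_0 \in (\cB_0)^\circ$ --- is exactly the paper's, and your sum-closure construction of the top-slope subobject, the seesaw bookkeeping for the iteration, and the Hom-vanishing uniqueness are all fine \emph{granted} your key assumption. The genuine gap is the attainment of $\mu_{\max}(M)$. Your finiteness induction on $C_0(M)$ can be made to work (although the inequality $\mu_{\max}\bigl(\bigoplus_i F_i\bigr) \leq \max_i \mu_{\max}(F_i)$ is not itself ``by the inductive hypothesis'': it needs its own projection/seesaw argument, which does go through since $(\cB_0)^\circ$ is closed under subobjects and direct sums). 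But attainment is justified only by ``discreteness of the invariants in the situations of interest'', and that is not available: in the definition of a slope-like function, $C_0$ is $\mathbb{Z}$-valued but $C_1$ is allowed to take values in $\mathbb{R}$, and Proposition \ref{pro5} is asserted for an arbitrary noetherian abelian category with such a pair. (Remark \ref{paper13-remark1} does speak of integer values for $C_1$, but the definition --- and the paper's own proof --- permit real values.) Nothing rules out a sequence of subobjects $F_n \subseteq M$ with $\mu(F_n)$ strictly increasing to a finite supremum that is never attained; the $F_n$ need not form an ascending chain, so noetherianity does not bite, and sum-closedness of the top-slope subobjects is vacuous precisely when no subobject attains the top slope. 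So your proof as written establishes the proposition only under an extra discreteness hypothesis on $C_1$.

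The paper's proof of the key Lemma \ref{lemma17} (adapted from Huybrechts--Lehn) avoids suprema altogether, and this is the missing idea: order the nonzero subobjects of $M$ by declaring $F_1 \leq F_2$ iff $F_1 \subseteq F_2$ and $\mu(F_1) \leq \mu(F_2)$; noetherianity yields maximal elements of this order, and among these one picks $F$ with $C_0(F)$ minimal --- this is where integrality of $C_0$ (not of $C_1$) is used. A chain of seesaw computations on $F \cap G$ and $F + G$ then shows $\mu(G) \leq \mu(F)$ for \emph{every} subobject $G \subseteq M$, with equality only if $G \subseteq F$; so $F$ is the maximal destabilizing subobject, it is automatically $\mu$-semistable, and the iteration and uniqueness proceed as you describe. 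If you replace your ``finiteness plus attainment'' step with this partial-order argument, the rest of your write-up goes through essentially unchanged and in the full generality of the statement.
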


When $E$ lies in $(\cB_0)^\circ$ and $E_i$ are  as in \eqref{eq47}, we define $\mu_{max} (E) := \mu (E_1/E_0)$ and $\mu_{min} (E) := \mu (E_m/E_{m-1})$.

Towards proving Proposition \ref{pro5}, we first prove:

\begin{lemma}\label{lemma17}
Suppose $E$ is an object in $(\mathcal B_0)^\circ$.  Then there is a subobject $F \subseteq E$ such that we have $\mu (G) \leq \mu (F)$ for any subobject $G$ of $E$, with equality only if $G \subseteq F$.  Moreover, $F$ is uniquely determined and is $\mu$-semistable.
\end{lemma}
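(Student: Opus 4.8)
The plan is to produce $F$ as the largest subobject of $E$ of maximal slope, with the seesaw property of $\mu$ as the engine of the whole argument. For any short exact sequence $0 \to A \to B \to C \to 0$ in $\cA$ in which $A,B,C$ all have $C_0>0$, additivity of $C_0$ and $C_1$ expresses $\mu(B)$ as the mediant of $\mu(A)$ and $\mu(C)$, so that $\mu(A)\leq\mu(B) \Leftrightarrow \mu(A)\leq\mu(C) \Leftrightarrow \mu(B)\leq\mu(C)$, and likewise with $<$, $=$ and $\geq$. First I would record the basic bound: since $(\cB_0)^\circ$ is closed under subobjects (a nonzero map from $C\in\cB_0$ into a subobject $G\subseteq E$ composes with the monomorphism $G\hookrightarrow E$ to a nonzero map into $E$), any nonzero $G\subseteq E$ lies in $(\cB_0)^\circ$; if $C_0(G)=0$ then $G\in\cB_0\cap(\cB_0)^\circ=0$, so in fact $C_0(G)\geq 1$, and the sequence $0\to G\to E\to E/G\to0$ with $C_0(E/G)\geq0$ gives $1\leq C_0(G)\leq C_0(E)$. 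In particular $\mu(G)$ is finite for every nonzero $G\subseteq E$.

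Set $\mu_{\max}(E):=\sup\{\mu(G):0\neq G\subseteq E\}$. Granting for the moment that this supremum is finite and attained, let $\Sigma$ be the nonempty family of subobjects $G$ with $\mu(G)=\mu_{\max}(E)$. The heart of the proof is that $\Sigma$ is closed under sums. Given $G_1,G_2\in\Sigma$, apply the seesaw to $0\to G_1\cap G_2\to G_2\to G_2/(G_1\cap G_2)\to0$: since $\mu(G_1\cap G_2)\leq\mu_{\max}(E)=\mu(G_2)$, we get $\mu\bigl(G_2/(G_1\cap G_2)\bigr)\geq\mu_{\max}(E)$; using $G_2/(G_1\cap G_2)\cong(G_1+G_2)/G_1$, a second application of the seesaw to $0\to G_1\to G_1+G_2\to(G_1+G_2)/G_1\to0$ yields $\mu(G_1+G_2)\geq\mu(G_1)=\mu_{\max}(E)$, and the reverse inequality is the definition of the supremum, so $G_1+G_2\in\Sigma$. (The degenerate cases $G_1\cap G_2=0$ and the vanishing of $C_0$ on a subquotient are handled directly by additivity and the axiom $C_0=0\Rightarrow C_1\geq0$.) Because $\cA$ is noetherian, the directed family of finite sums of members of $\Sigma$ has a maximal element $F$, which then lies in $\Sigma$ and contains every member of $\Sigma$. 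This $F$ is the desired subobject: for any $G\subseteq E$ we have $\mu(G)\leq\mu_{\max}(E)=\mu(F)$, with equality forcing $G\in\Sigma$ and hence $G\subseteq F$; uniqueness follows by applying this to any competitor $F'$; and $F$ is $\mu$-semistable because each of its nonzero subobjects is a subobject of $E$ of slope $\leq\mu_{\max}(E)=\mu(F)$.

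The main obstacle is the deferred claim that $\mu_{\max}(E)$ is \emph{finite and attained}; this is exactly where the defining features of a slope-like function — integrality of $C_0$ and the boundary positivity $C_0=0\Rightarrow C_1\geq0$ — must be combined with noetherianity, and it is the point at which GKR instead invoke a weak artinian hypothesis. The plan here is induction on the integer $C_0(E)$. The base case $C_0(E)=1$ is immediate: a nonzero $G\subseteq E$ then has $C_0(G)=1$, so $E/G\in\cB_0$ gives $C_1(E/G)\geq0$ and hence $\mu(G)=C_1(G)\leq C_1(E)=\mu(E)$, so $\mu_{\max}(E)=\mu(E)$ is attained by $E$. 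For the inductive step one uses that, by the bound above, $C_0$ takes only the finitely many values $1,\dots,C_0(E)$ on nonzero subobjects: subobjects with $C_0(G)=C_0(E)$ again satisfy $\mu(G)\leq\mu(E)$ by boundary positivity, while any supremising sequence drawn from a stratum $C_0(G)<C_0(E)$ should, after passing to the noetherian-stabilised sum of its terms, be trapped inside a proper subobject of strictly smaller $C_0$, where the inductive hypothesis applies. I expect the bookkeeping of these $C_0$-strata — in particular guaranteeing that such a supremising family really does land in a subobject of \emph{strictly} smaller $C_0$ rather than filling out all of $C_0(E)$ — to be the genuinely delicate part; everything else reduces to repeated use of the seesaw inequality and the ascending chain condition.
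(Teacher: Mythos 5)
Your scaffolding \emph{after} the deferred claim is essentially correct: the closure of $\Sigma$ under sums via the seesaw, the noetherian maximality argument producing $F$ as a sum dominating all of $\Sigma$, and the deductions of the containment property, uniqueness and $\mu$-semistability all work. But the deferred claim --- that $\mu_{\max}(E)$ is finite and attained --- \emph{is} the existence half of the lemma, so deferring it makes the argument nearly circular, and your inductive sketch for it breaks exactly where you flag. Two things go wrong in the inductive step. First, the stabilised sum $H$ of a supremising family will typically have $C_0(H)=C_0(E)$, so the inductive hypothesis is unavailable: already for a rank-two sheaf $E$ on a curve, the sum of two distinct rank-one subsheaves has rank two, so a supremising family drawn from the stratum $C_0=1$ generally fills out all of $C_0(E)$. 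Second, your engine for controlling sums needs an input of the form $\mu(G_1\cap G_2)\leq \mu(G_2)$; you have this for members of $\Sigma$ (both slopes equal the maximum), but for a merely supremising sequence $\mu(G_1\cap G_2)$ can exceed $\mu(G_2)$, and then the seesaw shows the partial sum can have \emph{strictly smaller} slope than either summand. So you can neither trap the sequence inside a subobject of smaller $C_0$ nor extract from the sums a single subobject dominating the sequence. As you yourself note, this is the point where GKR/Rudakov assume a weak artinian hypothesis; noetherianity plus a limiting process does not substitute for it.

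The paper's proof (following HL, Lemma 1.3.5) is designed precisely to avoid ever knowing finiteness or attainment in advance. It puts the order $F_1\leq F_2$ iff $F_1\subseteq F_2$ and $p(F_1)\leq p(F_2)$ on nonzero subobjects of $E$; noetherianity gives the ascending chain condition for this order, hence maximal elements exist, and among them one fixes $F$ with \emph{minimal} $C_0$ --- this is where integrality of $C_0$ enters. Then the identity
\[
C_0(F\cap G)\,\bigl(p(G)-p(F\cap G)\bigr) \;=\; C_0(F+G)\,\bigl(p(F+G)-p(F)\bigr) \;+\; \bigl(C_0(G)-C_0(F\cap G)\bigr)\,\bigl(p(F)-p(G)\bigr),
\]
coming from $0\to F\cap G\to F\oplus G\to F+G\to 0$, is used twice (once to reduce a hypothetical destabiliser $G$ to one inside $F$, once against a $\leq$-maximal extension $G'$ of it, where the minimality of $C_0(F)$ forces $G'\nsubseteq F$) to rule out any $G$ with $p(G)>p(F)$, and a variant of the same computation handles the equality case. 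In short, the maximiser is manufactured directly as a maximal element of an auxiliary order rather than approached as a limit; some device of this kind is what your outline is missing, and without it the proof has a genuine gap.
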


Proposition \ref{pro5} has appeared in various forms including slope stability for coherent sheaves (see \cite[Theorem 1.6.7]{HL}) and tilt stability for complexes (see \cite[Lemma 3.2.4]{BMT14}).  Although the proof of \cite[Theorem 1.3.4]{HL} applies essentially without change, we still include the proof here to emphasise the point, once and for all, that the result holds for any noetherian abelian catgory with a slope-like function.

\begin{proof}[Proof of Lemma \ref{lemma17}]
We can assume $E \neq 0$.  Proceeding as in the proof of \cite[Lemma 1.3.5]{HL}, we consider an ordering $\leq$ on the nonzero subobjects of $E$, where we define $F_1 \leq F_2$ whenever
\[
  F_1 \subseteq F_2 \text{ and } p (F_1) \leq p (F_2).
\]
(Recall that, for objects $A, B$ with nonzero $C_0$, the condition $p(A) \leq p(B)$ is equivalent to $\mu(A) \leq \mu (B)$.) Since $\cA$ is a noetherian abelian category, every ascending chain of subobjects of $E$ with respect to $\leq$ in $\cA$ becomes stationary; among the maximal objects with respect to $\leq$, fix any one $F$ with minimal $C_0$ (here, we use the assumption that the codomain of the function $C_0$ is $\mathbb{Z}$).

Suppose that there exists an object $G \subseteq E$ with $p (G) > p (F)$.  We first show that we can then assume $G \subseteq F$: consider the short exact sequence in $\cA$
\begin{equation}\label{eq46}
0 \to F \cap G \to F \oplus G \to F + G \to 0.
\end{equation}
 Note that, since $E$ is nonzero and lies in $(\cB_0)^\circ$, we have $C_0(E) \neq 0$.  Since $F, G$ are also nonzero and lie in $(\cB_0)^\circ$ (by virtue of being subobjects of $E$), we also have $C_0(F), C_0(G) \neq 0$, and so $p(F), p(G), p(F+G)$ are all polynomials of degree 1. Now, if $F \cap G = 0$, then $F+G \cong F \oplus G$, and from the short exact sequence in $\cA$
\[
0 \to F \to F \oplus G \to G \to 0
\]
together with $p (F)< p (G)$, we obtain $p (F) < p (F\oplus G)= p (F+G)$ while $F \subsetneq F+G$ and the maximality of $F$ is contradicted.  So we can assume $F \cap G \neq 0$.  Then $C_0(F \cap G) >0$, and from \eqref{eq46} we obtain:
\begin{equation}\label{eq45}
C_0 (F \cap G)  (p(G) - p(F \cap G) ) = C_0 (F+G) (p(F+G) - p(F)) + (C_0(G)-C_0(F \cap G))(p(F)-p(G)).
\end{equation}
The maximality of $F$ implies $p (F) \geq p (F+G)$.  This, together with the assumption $p(F)<p(G)$, implies that the right-hand side of \eqref{eq45} is non-positive, and so $p(F \cap G) \geq p(G) > p(F)$.  That is, we have produced a nonzero subobject $F \cap G$ of $F$ satisfying $p (F \cap G) > p(F)$.  Therefore, replacing $G$ by $F \cap G$, we will assume that $G \subseteq F$ from now on.  In addition, we will assume that $G$ is $\leq$-maximal among all the subobjects $G_0$ of $F$ such that $p(G_0) > p (F)$.

Next, we let $G'$ be any subobject of $E$ containing $G$ and is $\leq$-maximal.  Then $p(F) < p (G) \leq p(G')$.  By the maximality of $G'$ and $F$, we now must have $G' \nsubseteq F$.  For, if $G' \subseteq F$, then $C_0 (G') \leq C_0 (F)$.  If equality holds, then $C_0 (F/G')=0$, which implies that $C_1 (F/G') \geq 0$, and so $p (G') \leq p (F)$, a contradiction.  On the other hand, if $C_0 (G') < C_0 (F)$, then the minimality assumption on $C_0 (F)$ is contradicted.  Hence $G' \subseteq F$ cannot hold.  As a result, we have $F \subsetneq F + G'$, and the $\leq$-maximality of $F$ implies that $p(F) > p(F+G')$.

Overall, we now have $p(F)>p(F+G')$ and $p(F)< p(G')$. Replacing $G$ by $G'$ in the short exact sequence \eqref{eq46} and in \eqref{eq45}, we obtain that the right-hand side of \eqref{eq45} is strictly negative, and so, since $C_0 (F\cap G') >0$ (since $F, G'$ both contain $G$ as a subobject), we have $p(F \cap G')>p(G')\geq p(G)$.  However, the object $F \cap G'$ then contradicts the maximality of $G$.

Therefore, we have proved that for any nonzero $G \subseteq E$, we have $p (G) \leq p (F)$, i.e.\ $\mu (G) \leq \mu (F)$.

Next, we show that if $G \subseteq E$ is such that $p (F) = p (G)$, then it must follow that $G \subseteq F$: for any such $G$, if we have $G \nsubseteq F$, then $F \subsetneq F +G$, and by the $\leq$-maximality of $F$, we must have $p (F) > p (F+G)$.  Now, $F \cap G$ cannot be zero, or else $F+G \cong F \oplus G$, and then we would have $F \subsetneq F+G \subseteq E$ while $p (F) = p (F+G)$, contradicting the $\leq$-maximality of $F$. Thus $F \cap G \neq 0$, and the inequality $p (G) = p (F) > p (F+G)$ implies, from \eqref{eq46},
\[
  p (F\cap G) > p (F\oplus G) = p (F),
\]
contradicting the conclusion in the previous paragraph.  Therefore, any $G \subseteq E$ with $p (F) = p (G)$ must satisfy  $G \subseteq F$.

The rest of the lemma is clear.
\end{proof}

The standard argument (see \cite[Proposition 1.2.7]{HL}, for instance) gives us the following property of $\mu$-semistable objects in $\cA$:

\begin{lemma}\label{lemma18}
Given $\mu$-semistable objects  $F, G \in (\mathcal B_0)^\circ$ with $\mu (F)> \mu (G)$, we have $$\Hom_{\cA}(F,G)=0.$$
\end{lemma}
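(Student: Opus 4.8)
The plan is to prove the contrapositive-free, direct statement: if $F, G \in (\cB_0)^\circ$ are $\mu$-semistable with $\mu(F) > \mu(G)$, then any morphism $\varphi : F \to G$ in $\cA$ must vanish. First I would suppose $\varphi \neq 0$ and form its image $I := \image(\varphi)$ inside $\cA$, together with the two short exact sequences $0 \to K \to F \to I \to 0$ (where $K = \kernel \varphi$) and $0 \to I \to G$ exhibiting $I$ as a nonzero subobject of $G$ and a nonzero quotient of $F$. Since $I$ is a subobject of $G \in (\cB_0)^\circ$, it too lies in $(\cB_0)^\circ$, so $C_0(I) > 0$ and $\mu(I)$ is a genuine real number (not $+\infty$); this is the point where I lean on the remark in the excerpt that subobjects of objects in $(\cB_0)^\circ$ remain in $(\cB_0)^\circ$.

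Next I would extract the two slope inequalities. Semistability of $G$ applied to the subobject $I \subseteq G$ gives $\mu(I) \leq \mu(G)$. For the quotient side, I need that $F$ being $\mu$-semistable forces its quotient $I$ to satisfy $\mu(F) \leq \mu(I)$. The cleanest way to see this is via the additivity of $C_0$ and $C_1$ on the short exact sequence $0 \to K \to F \to I \to 0$: if $K = 0$ then $I \cong F$ and $\mu(I) = \mu(F)$ directly; if $K \neq 0$, then $K$ is a proper nonzero subobject of $F$, so semistability gives $\mu(K) \leq \mu(F)$, and the standard `see-saw' relation — that $\mu(F)$ is a weighted mediant of $\mu(K)$ and $\mu(I)$ with positive weights $C_0(K), C_0(I)$ — forces $\mu(I) \geq \mu(F)$. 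Here I should first confirm $C_0(K) \geq 0$ and handle the degenerate case $C_0(K) = 0$ separately: in that case $C_0(F) = C_0(I)$, and combined with $C_1(F) = C_1(K) + C_1(I)$ together with $C_1(K) \geq 0$ (which holds because $C_0(K) = 0$ implies $C_1(K) \geq 0$ by the defining property of a slope-like function) one again gets $\mu(I) \geq \mu(F)$.

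Chaining the two inequalities yields $\mu(F) \leq \mu(I) \leq \mu(G)$, directly contradicting the hypothesis $\mu(F) > \mu(G)$. Hence no nonzero $\varphi$ exists and $\Hom_\cA(F,G) = 0$.

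I expect the only real subtlety — the place where one cannot simply quote the classical sheaf-theoretic argument verbatim — to be the bookkeeping in the quotient-slope step when $C_0(K) = 0$. Because a slope-like function only guarantees $C_0 \geq 0$ (rather than strict positivity away from a torsion subcategory) and allows the degenerate behaviour flagged in the discussion preceding Proposition \ref{pro5}, I must not assume the kernel $K$ contributes positively to $C_0$; I have to invoke the second defining axiom ($C_0 = 0 \Rightarrow C_1 \geq 0$) to control $\mu(I)$ versus $\mu(F)$. Everything else is the standard mediant manipulation, and since $I \in (\cB_0)^\circ$ the slopes involved are all finite, so no $+\infty$ arithmetic intervenes.
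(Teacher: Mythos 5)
Your proposal is the standard argument, which is exactly what the paper does: the paper offers no written proof of Lemma \ref{lemma18}, it simply cites \cite[Proposition 1.2.7]{HL}, and your image-plus-see-saw argument is that proof written out. The main chain is right: for nonzero $\varphi$ set $I = \image \varphi$; then $I \in (\cB_0)^\circ$ (a nonzero map $C \to I$ with $C \in \cB_0$ would compose with the mono $I \hookrightarrow G$ to contradict $G \in (\cB_0)^\circ$), so $C_0(I)>0$; semistability of $G$ gives $\mu(I) \leq \mu(G)$, the mediant computation on $0 \to K \to F \to I \to 0$ with $C_0(K)>0$ gives $\mu(F) \leq \mu(I)$, and the chain contradicts $\mu(F) > \mu(G)$.

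However, your handling of the degenerate case $C_0(K)=0$, $K \neq 0$ is wrong as written: from $C_0(F)=C_0(I)$, $C_1(F)=C_1(K)+C_1(I)$ and $C_1(K) \geq 0$ you can only conclude $C_1(I) \leq C_1(F)$, hence $\mu(I) \leq \mu(F)$ --- the \emph{opposite} of the inequality $\mu(I) \geq \mu(F)$ you claim, and the opposite of what the contradiction needs. The proof survives only because this case is vacuous: $K$ is a subobject of $F \in (\cB_0)^\circ$, so (by the same orthogonality argument you used for $I$) $K \in (\cB_0)^\circ$, and a nonzero object of $(\cB_0)^\circ$ has $C_0 > 0$. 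Equivalently, if $K \neq 0$ had $C_0(K)=0$ then $\mu(K) = +\infty > \mu(F)$, and since $K \subsetneq F$ (as $I \neq 0$) this would already contradict the $\mu$-semistability of $F$. So the correct repair is to observe that the case cannot occur, not to derive the slope inequality in it; with that one-line substitution your proof is complete.
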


\begin{proof}[Proof of Proposition \ref{pro5}]
Take any $E \in \cA$.  Set $E_0$ to be the maximal subobject of $E$ in the torsion class $\cB_0$, and set $E_{01}$ to be the maximal subobject of $E_0$ in the torsion class $\cB_{01}$.  Then $E_0/E_{01} \in (\cB_0) \cap (\cB_{01})^\circ$, and $E/E_0 \in (\cB_0)^\circ$.  The rest of the proof uses Lemma \ref{lemma18} and follows the same argument as that for Gieseker stability for coherent sheaves - see \cite[Theorem 1.3.4]{HL}.
\end{proof}



\begin{example}\label{paper13-eg1}
Let $X$ be a smooth projective variety of dimension $n$, and let $\omega$ be a fixed ample divisor class on $X$.  If we take $\mathcal A := \Coh (X)$ and
\begin{align*}
  C_0 (-) &:= \rank (-), \\
  C_1 (-) &:= c_1(-) \cdot \omega^{n-1},
\end{align*}
then the resulting slope-like function $\mu := C_1(-)/C_0(-)$ coincides with the usual slope function $\mu_\omega (-)$ that gives rise to the notion of slope semistability  for coherent sheaves on $X$.

More generally, if $n := \dimension X$ and $0 \leq d \leq n$ is any integer, then the functions
\begin{align*}
  C_0(-) &:= \ch_{n-d}(-)\cdot \omega^d \\
  C_1(-) &= \ch_{n-d+1}(-)\cdot \omega^{d-1}
\end{align*}
give a slope-like function on the noetherian abelian subcategory $\mathcal A := \Coh^{\leq d}(X)$ of $\Coh (X)$.
\end{example}

\begin{example}\label{paper13-eg2}
Let $\pi : X \to S$ be any flat morphism of relative dimension 1, where $X, S$ are both smooth projective varieties.  Let $f$ denote the fiber class for the morphism $\pi$.  Then we can take $\mathcal A := \Coh (X)$, and
\begin{align*}
  C_0 (-) &:= \rank (-),\\
  C_1 (-) &:= c_1(-)\cdot f, \text{ i.e.\ the `fiber degree',}
\end{align*}
together give a slope-like function $\mu_f(-) := C_1(-)/C_0(-)$ on  $\Coh (X)$.

The Harder-Narasimhan filtration for coherent sheaves associated to the slope-like function $\mu_f$ was formulated earlier by Yoshioka \cite[Section 3.3]{YosPII}, and was called the `relative Harder-Narasimhan filtration'.  It is a slightly different formulation of the relative Harder-Narasimhan filtration for coherent sheaves with respect to a projective morphism found in \cite[Theorem 2.3.2]{HL10}, for example.
\end{example}

%

\begin{remark}\label{paper13-remark1}
Our notion of a slope-like function is very close to the notion of a `weak central charge' introduced in Brown-Shipman \cite{BS}, in the following sense: given a noetherian abelian category $\mathcal A$, suppose $C_0 : K(\mathcal A) \to \mathbb{Z}$ and $C_1 : K(\mathcal A) \to \mathbb{R}$ are two functions that satisfying the positivity properties above and give rise to a slope-like function $\mu (-) := C_1(-)/C_0(-)$.  Then the group homomorphism
\begin{align*}
  Z : K(\mathcal A) &\longrightarrow \mathbb{C}, \\
  E &\longmapsto C_1(E) + iC_0(E)
\end{align*}
is a weak central charge in the sense of \cite{BS}.

Note that, in the definition of a weak central charge, the function $C_1$ (the real part of $Z$) is allowed to take values in $\mathbb{R}$, whereas we require it to take values in $\mathbb{Z}$ in the definition of a slope-like function.
\end{remark}

We finish this section with a re-interpretation of the torsion class $\mathfrak T_X'$ in terms of the slope-like function $\mu_f$.

\begin{lemma}\label{lemma10}
Let $\pi : X \to S$ and $\wh{\pi} : Y \to S$ be dual elliptic fibrations.  Suppose $E \in \Coh (X)$ is  torsion-free and $\mu_f$-semistable.  Then $E|_s$ is slope semistable for a general closed point $s \in S$, and
\begin{itemize}
\item if $\mu_f (E)>0$, then $E \in W_{0,X}'$;
\item if $\mu_f (E)=0$, then $E \in \mathcal F_X \cap \Phi ( W_{0,Y} \cap \Coh^{\leq d-1})$;
\item if $\mu_f (E)<0$, then $E \in W_{1,X}$.
\end{itemize}
\end{lemma}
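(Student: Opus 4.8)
The plan is to reduce the whole statement to the fiberwise Fourier--Mukai dictionary \cite[Corollary 3.29]{FMNT}, following the template already used in the proofs of Lemma \ref{lemma16}(iii) and Proposition \ref{pro6}. First I would prove the opening assertion, that $E|_s$ is slope semistable for a general closed point $s \in S$. The function $\mu_f$ is exactly the slope-like function of Example \ref{paper13-eg2}, whose Harder--Narasimhan filtration is the relative HN filtration (Yoshioka's version, a variant of \cite[Theorem 2.3.2]{HL10}). Hence the $\mu_f$-semistability of $E$ says precisely that its relative HN filtration is trivial. Since the relative HN filtration restricts, on a general fiber, to the absolute HN filtration of $E|_s$, this forces $E|_s$ to be slope semistable for general $s$; moreover $\mu_f(E) = (c_1(E)\cdot f)/\rank E = \mu(E|_s)$ for such $s$. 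This simultaneously proves the first claim and matches the sign of $\mu_f(E)$ with the sign of $\mu(E|_s)$ on a general (smooth) fiber.

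With the fiberwise slope identified, the three cases are read off \cite[Corollary 3.29]{FMNT}. If $\mu_f(E) > 0$, then $E|_s$ is semistable of positive slope on the general fiber, so $\Psi_s(E|_s)$ sits in degree $0$; by definition this says $E \in W_{0,X}'$. If $\mu_f(E) < 0$, the same dictionary places $\Psi_s(E|_s)$ in degree $1$, i.e.\ $E \in W_{1,X}'$, and since $E$ is torsion-free, \cite[Lemma 3.18(ii)]{Lo11} upgrades this to $E \in W_{1,X}$.

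The case $\mu_f(E) = 0$ combines both mechanisms and carries essentially all the bookkeeping. Torsion-freeness gives $E \in \mathcal{F}_X$ immediately. The fiberwise dictionary now yields two facts at once: $\Psi_s(E|_s)$ sits in degree $1$, so as before $E \in W_{1,X}'$ and hence (being torsion-free) $E \in W_{1,X}$; and the slope-$0$ semistable sheaf $E|_s$ transforms to a $0$-dimensional sheaf, so $\wh{E}|_s$ is $0$-dimensional for general $s$, whence $\wh{E} \in \Coh^{\leq d-1}(Y)$. Applying Fourier--Mukai inversion to the globally $\Psi$-WIT$_1$ sheaf $E$ then shows $\wh{E}$ is $\Phi$-WIT$_0$ with $\Phi(\wh{E}) \cong E$, so $E = \Phi(\wh{E}) \in \Phi(W_{0,Y} \cap \Coh^{\leq d-1}(Y))$. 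This last manipulation is exactly the one carried out at the end of the proof of Lemma \ref{lemma16}(iii).

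The step requiring the most care is the first one: deducing genuine fiberwise semistability of $E|_s$, together with the slope identity, from the global $\mu_f$-semistability of $E$. This rests on the compatibility of the relative HN filtration with restriction to a general fiber (openness of semistability and generic base change), which I would invoke from the relative HN theory rather than reprove. Everything downstream is a routine application of \cite[Corollary 3.29]{FMNT}, \cite[Lemma 3.18(ii)]{Lo11}, and WIT-inversion, all of which already appear verbatim earlier in this section.
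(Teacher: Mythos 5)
Your proof is correct and follows essentially the same route as the paper: your first step (deducing slope semistability of $E|_s$ for general $s$ from $\mu_f$-semistability via the relative Harder--Narasimhan filtration of \cite[Theorem 2.3.2]{HL10}) is the paper's argument almost verbatim. The only difference is presentational: where the paper settles the three cases in one line by invoking the trichotomy of Proposition \ref{pro6}(b) for the filtration \eqref{eq44}, you inline the same ingredients --- \cite[Corollary 3.29]{FMNT}, \cite[Lemma 3.18(ii)]{Lo11}, and WIT-inversion --- from which Proposition \ref{pro6} and Lemma \ref{lemma16}(iii) were themselves assembled.
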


\begin{proof}
We claim that the restriction $E|_s$ is slope semistable for a general point $s \in S$.  For, if not, then from the relative HN filtration for $F$ with respect to slope stability and with respect to the fibration $\pi : X \to S$, we have a dense open subset $U \subseteq S$ and a subsheaf $G \subseteq E|_U$ on $\Coh (U)$ such that $\mu ( G|_s) > \mu (E|_s)$ for all $s \in U$.  Then we can extend the inclusion $G \subseteq E|_U$ in $\Coh (U)$ to an inclusion $\overline{G} \subseteq E$ in $\Coh (X)$, and now we have $\mu_f (\overline{G}) > \mu_f (E)$, contradicting the $\mu_f$-semistability of $E$.  Thus $E|_s$ is slope semistable for a general point $s \in S$.

Let $G_0 \subseteq G_1 \subseteq G_2 := E$ be the filtration of $E$ as in \eqref{eq44}.  By Proposition \ref{pro6}(b), we know that only one of the subfactors $G_0, G_1/G_0, G_2/G_1$ can be nonzero.
\end{proof}

We now have the following alternative description of $\mathfrak T_X'$, which was defined in \eqref{eq48}:

\begin{lemma}\label{lemma19}
$\mathfrak T_X' = \{ E \in \Coh (X): \mu_{f,min}(E) \geq 0\}$.
\end{lemma}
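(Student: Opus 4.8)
The plan is to read off both inclusions directly from the Harder--Narasimhan filtration of $E$ with respect to the slope-like function $\mu_f$ of Example~\ref{paper13-eg2}, combining the fact that $\mathfrak T_X'$ is a torsion class (Lemma~\ref{lemma16}(i)) with the trichotomy for torsion-free $\mu_f$-semistable sheaves recorded in Lemma~\ref{lemma10}. First I would unwind the shape of the HN filtration of Proposition~\ref{pro5} for this particular $\mu_f$: here $\cB_0$ is exactly the subcategory of torsion (rank-zero) sheaves, so the filtration $E_{01} \subseteq E_0 \subseteq E_1 \subseteq \cdots \subseteq E_m = E$ has $E_0$ equal to the torsion subsheaf of $E$, while the factors $E_i/E_{i-1}$ with $i \geq 1$ are torsion-free, $\mu_f$-semistable, with strictly decreasing slopes. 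Accordingly $\mu_{f,min}(E) = \mu_f(E_m/E_{m-1})$ is the slope of the last torsion-free factor, with the convention $\mu_{f,min}(E) = +\infty$ when $E$ is torsion, which is consistent since every torsion sheaf lies in $W_{0,X}' \subseteq \mathfrak T_X'$.

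For the inclusion $\supseteq$, I would assume $\mu_{f,min}(E) \geq 0$ and check that every factor of the filtration lies in $\mathfrak T_X'$. The torsion factors $E_{01}$ and $E_0/E_{01}$ are torsion sheaves, hence belong to $W_{0,X}' \subseteq \mathfrak T_X'$. Each factor $E_i/E_{i-1}$ with $i \geq 1$ is torsion-free, $\mu_f$-semistable, and satisfies $\mu_f(E_i/E_{i-1}) \geq \mu_{f,min}(E) \geq 0$, so Lemma~\ref{lemma10} places it in $W_{0,X}'$ when its slope is positive and in $\mathcal F_X \cap \Phi(W_{0,Y} \cap \Coh^{\leq d-1}(Y))$ when its slope is zero; both of these sit inside $\mathfrak T_X'$ by the definition \eqref{eq48}. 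Since $\mathfrak T_X'$ is closed under extensions and $E$ is an iterated extension of these factors, I conclude $E \in \mathfrak T_X'$.

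For the reverse inclusion $\subseteq$, I would argue by contradiction, supposing $E \in \mathfrak T_X'$ yet $\mu_{f,min}(E) < 0$. The minimal factor $E_m/E_{m-1}$ is then a nonzero quotient of $E$, so it lies in $\mathfrak T_X'$ because a torsion class is closed under quotients (Lemma~\ref{lemma16}(i)). But $E_m/E_{m-1}$ is torsion-free and $\mu_f$-semistable with negative slope, so Lemma~\ref{lemma10} shows that its restriction to a general fiber is slope semistable of slope $\mu_f(E_m/E_{m-1}) < 0$; thus every HN factor of $(E_m/E_{m-1})|_s$ has $\mu < 0$ for general $s$, and Lemma~\ref{lemma16}(iii) then forces $E_m/E_{m-1} \notin \mathfrak T_X'$, a contradiction. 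Hence $\mu_{f,min}(E) \geq 0$, completing the proof.

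The bulk of the work is really bookkeeping at the degenerate bottom of the $\mu_f$-HN filtration rather than any new geometry. The point I expect to require the most care is confirming that $\cB_0$ for $\mu_f$ is precisely the torsion sheaves, so that $E_0$ is the torsion subsheaf and the higher factors are genuinely torsion-free and thus eligible for Lemma~\ref{lemma10}; alongside this I would pin down the convention for $\mu_{f,min}$ on sheaves that are torsion, or of positive rank but with a nonzero torsion subsheaf, and check that the purely torsion case is handled consistently on both sides of the claimed equality. With those conventions fixed, both inclusions reduce to the torsion-class property of $\mathfrak T_X'$ and the trichotomy of Lemma~\ref{lemma10}.
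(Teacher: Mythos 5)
Your proof is correct, and your forward inclusion $\{\mu_{f,min} \geq 0\} \subseteq \mathfrak T_X'$ is exactly the paper's argument (which compresses it to one sentence citing Proposition \ref{pro5} and Lemma \ref{lemma10}). For the reverse inclusion you take a genuinely different, and in fact slicker, route. The paper first reduces to $E$ torsion-free, takes the surjection $\alpha : E \twoheadrightarrow E'$ onto the minimal $\mu_f$-semistable quotient with $\mu_f(E') < 0$, and then argues geometrically on fibers: all HN factors of $E|_s$ have $\mu \geq 0$ for general $s$ (via Proposition \ref{pro6}(b)), while $E'|_s$ is semistable of negative slope by Lemma \ref{lemma10}, so $\alpha|_s = 0$ for general $s$; since restriction to a fiber is right exact, this forces $E'|_s = 0$ generically, contradicting the torsion-freeness of $E'$. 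You avoid this fiberwise Hom-vanishing step entirely: you pass membership in $\mathfrak T_X'$ down to the minimal HN factor $E_m/E_{m-1}$ using that a torsion class is closed under quotients (Lemma \ref{lemma16}(i)), and then contradict the characterization in Lemma \ref{lemma16}(iii) applied directly to that quotient. What your route buys is economy: once Lemma \ref{lemma16} is in place, the reverse inclusion becomes almost formal. What the paper's route buys is that it only uses the forward implication of the fiberwise characterization for $E$ itself, never invoking closure of $\mathfrak T_X'$ under quotients. Both arguments hinge on Lemma \ref{lemma10} in the same way. One further point in your favour: your explicit convention $\mu_{f,min}(E) = +\infty$ for torsion $E$ addresses something the paper leaves implicit, since $\mu_{min}$ is only defined there for objects of $(\cB_0)^\circ$, and your convention is the one that makes the statement correct on all of $\Coh(X)$.
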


\begin{proof}
Let $\mathcal T$ denote the category of coherent sheaves $E$ on $X$ satisfying $\mu_{f,min}(E) \geq 0$.  The inclusion $\mathcal T \subseteq \mathfrak T_X'$ follows from the existence of the $\mu_f$-HN filtration in Proposition \ref{pro5} and Lemma \ref{lemma10}.

To see the other inclusion, take any $E \in \mathfrak T_X'$.  We can assume that $E$ is torsion-free.  By Proposition \ref{pro6}(b), for a general closed point $s \in S$, all the HN factors of $E|_s$ have $\mu \geq 0$.  Now, if $\mu_{f,min}(E) <0$, then we can find a nonzero surjection $\alpha : E \twoheadrightarrow E'$ in $\Coh (X)$ where $E'$ is $\mu_f$-semistable, hence torsion-free, and with $\mu_f (E')<0$.  Besides, we know  $E'|_s$ is $\mu$-semistable for a general closed point $s \in S$ from Lemma \ref{lemma10}.  However, this implies that the restriction $\alpha |_s$ vanishes for a general closed point $s \in S$, contradicting $E'$ being torsion-free.    Thus $\mu_{f,min}(E) \geq 0$, i.e.\ $E \in \mathcal T$.
\end{proof}

\begin{remark}\label{remark3}
Lemma \ref{lemma19} means that, when $X$ is an elliptic surface, $\mathfrak T_X'$ coincides with the category $\overline{\mathfrak{T}}$ defined in  \cite[Definition 3.3.1]{YosPII}.
\end{remark}


\section{A Fourier-Mukai transform on the trivial elliptic fibration}\label{sec-FMTcohom}

From this section on, we restrict our discussion to a particularly simple case -- the trivial elliptic fibration over a projective K3 surface. There are many advantages of looking at this simple case. First of all, the action of the Fourier-Mukai transform on the cohomology classes obeys a clean formula which is easy to determine. Secondly, even in this simple example, we can already see how different it is between the Fourier-Mukai transform on a single abelian variety and that on a family. We will show later a few interesting results identifying different semistabilities via Fourier-Mukai transforms on trivial families. Before doing that, we first describe how Chern classes change under the Fourier-Mukai transform.

\subsection{Fourier-Mukai transforms on trivial families}\label{stabdyn-sec-product}

We introduce our notations. Let $C$ be an elliptic curve principally polarized by $L$; that is, $L$ is a degree $1$ line bundle on $C$. Let $S$ be a K3 surface of Picard number $1$ with $\pic(S)=\bZ [H_S]$. We write the product $X = C \times S$.

A Fourier-Mukai transform on $X$ can be considered as a trivial family of Fourier-Mukai transform on $C$ parametrised by $S$. We briefly recall the theory on a single elliptic curve $C$. We have an integral functor
\begin{equation}
\label{eqn:fm_curve}
\Phi_{\cE_C}: D^b(C) \lra D^b(C),
\end{equation}
whose kernel is given by $\cE_C=\pi_1^*L^{-1} \otimes \pi_2^*L^{-1} \otimes m^*L$ where $m$ is the addition on $C$. $\Phi_{\cE_C}$ has some properties which are summarised in the following lemma.

\begin{lemma}
\label{lem:FM_curve}
The integral functor $\Phi_{\cE_C}$ satisfies the following
\begin{enumerate}
\item The functor \eqref{eqn:fm_curve} is an equivalence of categories, hence a Fourier-Mukai transform;
\item For any $E \in D^b(C)$, we have
$$(\Phi_{\cE_C} \circ \Phi_{\cE_C}) (E) = \iota^*E[-1],$$
where $\iota$ is the ``inverse" operation with respect to the group structure on $C$;
\item The cohomological Fourier-Mukai transform
$$\Phi^H_{\cE_C}: H^*_{\mathrm{alg}}(C, \bZ) \lra H^*_{\mathrm{alg}}(C, \bZ)$$
is given by the formula
$$\Phi^H_{\cE_C}(a_0, a_1)=(a_1, -a_0),$$
considered as elements in $H^0(C, \bZ) \oplus H^2(C, \bZ)$.
\end{enumerate}
\end{lemma}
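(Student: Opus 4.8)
The plan is to recognise $\cE_C$ as a pullback of the Poincar\'e bundle, thereby reducing parts (1) and (2) to Mukai's theory of the Fourier--Mukai transform on abelian varieties (in the elliptic, i.e.\ $g=1$, case), and reducing part (3) to a single Chern-character computation on the surface $C\times C$. For part (1), write $f:=c_1(L)\in H^2(C,\bZ)$ for the class of a point, and let $\phi_L:C\to\wh{C}$, $x\mapsto t_x^\ast L\otimes L^{-1}$, be the polarisation morphism, which is an isomorphism because $\deg L=1$. The kernel $\cE_C=m^\ast L\otimes\pi_1^\ast L^{-1}\otimes\pi_2^\ast L^{-1}$ is precisely the Mumford bundle $\Lambda(L)$, and by the defining property of the Poincar\'e bundle $\cP$ on $C\times\wh{C}$ one has $\cE_C\cong(\id_C\times\phi_L)^\ast\cP$. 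Since $\id_C\times\phi_L$ is an isomorphism, a short base-change/projection-formula manipulation identifies $\Phi_{\cE_C}$ with $\phi_L^\ast\circ\Phi^{\cP}_{C\to\wh{C}}$, a composite of equivalences; this gives (1).

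For part (2), under the identification $\phi_L$ the transform $\Phi_{\cE_C}$ becomes the Fourier--Mukai transform of $C$ against its dual with the Poincar\'e kernel, so Mukai's inversion theorem in the case $g=1$ applies and gives $\Phi^{\cP}_{\wh{C}\to C}\circ\Phi^{\cP}_{C\to\wh{C}}\cong(-1_C)^\ast[-1]$. Transporting this through $\phi_L$, which is a group homomorphism and hence intertwines the inversions on $C$ and $\wh{C}$, yields $(\Phi_{\cE_C}\circ\Phi_{\cE_C})(E)\cong\iota^\ast E[-1]$, using $-1_C=\iota$ and $\dim C=1$.

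For part (3), since $C$ is an abelian variety its tangent bundle is trivial, so $\td(C\times C)=1$ and the cohomological transform is simply $\Phi^H_{\cE_C}(\alpha)=\pi_{2\ast}\!\big(\pi_1^\ast\alpha\cdot\ch(\cE_C)\big)$. Writing $\gamma:=c_1(\cE_C)=m^\ast f-f_1-f_2$ with $f_i:=\pi_i^\ast f$, the computation rests on the identities $(m^\ast f)^2=0$ and $m^\ast f\cdot f_i=f_1 f_2=[\mathrm{pt}]$, which give $\gamma^2=-2[\mathrm{pt}]$ and hence $\ch(\cE_C)=1+\gamma-[\mathrm{pt}]$. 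Feeding $\alpha=a_0+a_1 f$ into the projection formula, together with $\pi_{2\ast}(f_1)=1$, $\pi_{2\ast}(f_2)=0$ and $\pi_{2\ast}(m^\ast f)=1$, then produces $\Phi^H_{\cE_C}(a_0,a_1)=(a_1,-a_0)$, which is (3).

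The Chern-character computation in (3) is entirely routine; the genuine obstacle is making the identification $\cE_C\cong(\id\times\phi_L)^\ast\cP$ precise and tracking the inversion and the shift through it in (2), i.e.\ checking that the two copies of $\phi_L$ cancel and that the sign in $(-1_C)^\ast$ is transported to $\iota$ rather than its inverse (they agree since $\iota=\iota^{-1}$). An alternative, more self-contained route to (2) would avoid Mukai's inversion theorem and instead compute the convolution of kernels directly, showing $\cE_C\ast\cE_C\cong\cO_{\Gamma_\iota}[-1]$ for the graph $\Gamma_\iota$ of $\iota$; this replaces the bookkeeping by an explicit derived push-forward along $C\times C\times C\to C\times C$, which is where the main effort would then concentrate.
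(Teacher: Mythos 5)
Your proposal is correct, but it is genuinely more than what the paper does: the paper's entire proof of this lemma is a citation, quoting \cite[Proposition 9.19]{Huy06} for parts (1) and (2) and \cite[Lemma 9.23]{Huy06} for part (3), with no argument given. What you have written is essentially a reconstruction of the proofs behind those citations. Your reduction of (1) and (2) is the standard one and is sound: the see-saw identification $\cE_C \cong (\id_C\times\phi_L)^\ast\cP$ of the Mumford bundle with the pulled-back Poincar\'e bundle is correct (both restrict trivially to $\{0\}\times C$ and restrict to $t_y^\ast L\otimes L^{-1}$ on $C\times\{y\}$), $\phi_L$ is an isomorphism because $\deg L=1$, and the ``cancellation'' of the two copies of $\phi_L$ in (2) can be made precise by using the swap-symmetry of $\cE_C$ to write one factor as $(\id\times\phi_L)^\ast\cP$ and the other as $(\phi_L\times\id)^\ast\cP^t$, so that the convolution kernel becomes $\bR\pi_{13\ast}(\pi_{12}^\ast\cP\otimes\pi_{23}^\ast\cP^t)$ on $C\times\wh{C}\times C$, to which Mukai's inversion theorem applies directly; since $\phi_L$ is a homomorphism it intertwines $-1_C$ and $-1_{\wh{C}}$, giving $\iota^\ast[-1]$ as claimed. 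Your computation for (3) also checks out in every detail: $\td=1$ by triviality of the tangent bundles, $\gamma^2=-2[\mathrm{pt}]$ from $(m^\ast f)^2=0$ and $m^\ast f\cdot f_i=f_1f_2=[\mathrm{pt}]$, and the pushforwards $\pi_{2\ast}(f_1)=\pi_{2\ast}(m^\ast f)=1$, $\pi_{2\ast}(f_2)=0$ yield exactly $(a_0,a_1)\mapsto(a_1,-a_0)$. The trade-off between the two routes is clear: the paper buys brevity by outsourcing to Huybrechts, whereas your version is self-contained modulo Mukai's equivalence and inversion theorems, and your direct computation of (3) additionally previews the method (GRR with trivial Todd class plus explicit intersection numbers) that the paper itself uses later in Proposition \ref{prop:product_change_class} and Lemma \ref{lem:pos0}; your suggested alternative for (2) via the convolution kernel $\cE_C\ast\cE_C\cong\cO_{\Gamma_\iota}[-1]$ is likewise exactly the mechanism the paper exploits in the proof of Proposition \ref{prop:FM_product} when it passes to the relative setting.
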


\begin{proof}
The first two statements can be found in \cite[Proposition 9.19]{Huy06}. The third statement is \cite[Lemma 9.23]{Huy06}.
\end{proof}

Now we consider the relative version. Note that we have the following fiber diagram
\begin{equation}
\label{eqn:trivial_product}
\xymatrix{
C \times C \times S \ar[r]^{\pi_{23}} \ar[d]_{\pi_{13}} & C \times S \ar[d]^{\pi_S} \\
C \times S \ar[r]_{\pi_S} & S.
}
\end{equation}

Similarly we write $\pi_{12}: C \times C \times S \lra C \times C$ for the projection to the first two factors, and write $\cE=\pi_{12}^*\cE_C$. Then $\cE$ defines a relative integral functor
\begin{equation}
\label{eqn:fm-relative}
\Phi_\cE: D^b(C \times S) \lra D^b(C \times S).
\end{equation}
It is not difficult to see that $\Phi_\cE$ is an equivalence.

\begin{proposition}
\label{prop:FM_product}
The integral functor $\Phi_\cE$ satisfies the following
\begin{enumerate}
\item The functor \eqref{eqn:fm-relative} is an equivalence of categories, hence a Fourier-Mukai transform;
\item For any $E \in D^b(C \times S)$, we have
$$(\Phi_\cE \circ \Phi_\cE) (E) = \iota^*E[-1],$$
where $\iota$ is the ``inverse" operation on $C$ and identity on $S$.
\end{enumerate}
\end{proposition}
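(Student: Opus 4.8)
The plan is to reduce both assertions to the single-curve statements in Lemma \ref{lem:FM_curve} by means of the composition law for relative integral functors together with flat base change over $S$; smoothness and projectivity of all the spaces involved guarantee that the derived functors are well behaved. I would treat part (2) first, since part (1) follows formally from it.

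Part (2) is really a statement about the relative convolution of the kernel $\cE$ with itself. By the standard composition law for relative integral functors over $S$, one has $\Phi_\cE \circ \Phi_\cE \cong \Phi_{\cE \star \cE}$, where $\cE \star \cE$ is the kernel on the relative fiber product $(C \times S)\times_S(C\times S)\cong C\times C\times S$ obtained by pulling the two copies of $\cE$ back to the triple relative product $C\times C\times C\times S$, tensoring, and pushing forward along the projection $p_{13}^{\mathrm{rel}}$ that forgets the middle $C$-factor. So everything comes down to identifying $\cE\star\cE$. The key point is that, because $\cE=\pi_{12}^\ast\cE_C$ is pulled back from the absolute product $C\times C$, both pullbacks of $\cE$ to $C\times C\times C\times S$ are themselves pulled back, via the projection $q$ forgetting $S$, from the corresponding pullbacks of $\cE_C$ to $C\times C\times C$; hence so is their tensor product. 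Moreover $p_{13}^{\mathrm{rel}}$ fits into a Cartesian square with the absolute projection $p_{13}^{\mathrm{abs}}: C\times C\times C\to C\times C$ in which the two horizontal arrows are the $S$-forgetting maps $q$ (top) and $q_{13}:C\times C\times S\to C\times C$ (bottom). Since $q_{13}$ is flat, flat base change applied to this square yields $\cE\star\cE \cong q_{13}^\ast(\cE_C\star\cE_C)$, where $\cE_C\star\cE_C$ is the absolute convolution on $C\times C$. This base-change bookkeeping — erecting the triple product, checking that the square is Cartesian and that the relevant projection is flat, and verifying that the tensor product of the two pullbacks is again a pullback — is the one step that needs genuine care; everything else is formal.

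To finish part (2) I would invoke Lemma \ref{lem:FM_curve}(2), which says $\Phi_{\cE_C}\circ\Phi_{\cE_C}\cong \iota^\ast(-)[-1]$ on $D^b(C)$. Thus $\Phi_{\cE_C\star\cE_C}$ represents the equivalence $\iota^\ast(-)[-1]$, and by the uniqueness of Fourier–Mukai kernels (Orlov) the kernel $\cE_C\star\cE_C$ is isomorphic to $\cO_\Gamma[-1]$, where $\Gamma\subset C\times C$ is the graph of the involution $\iota$. Pulling back along $q_{13}$ turns this into $\cO_{\Gamma\times S}[-1]$, which is exactly the kernel representing $(\iota\times\id_S)^\ast(-)[-1]$ on $D^b(C\times S)$. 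Combining with the previous paragraph gives $\Phi_\cE\circ\Phi_\cE\cong(\iota\times\id_S)^\ast(-)[-1]=\iota^\ast(-)[-1]$, where $\iota$ now denotes the inverse on $C$ and the identity on $S$ as in the statement; this is precisely part (2).

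Finally, part (1) is a formal consequence. Writing $T:=\iota^\ast(-)[-1]$, which is visibly an autoequivalence, the isomorphism $\Phi_\cE\circ\Phi_\cE\cong T$ exhibits $\Phi_\cE\circ(\Phi_\cE\circ T^{-1})\cong\id$ and $(T^{-1}\circ\Phi_\cE)\circ\Phi_\cE\cong\id$, so $\Phi_\cE$ admits both a right and a left quasi-inverse; these are then automatically isomorphic, and hence $\Phi_\cE$ is an equivalence, i.e.\ a Fourier–Mukai transform. (Alternatively, part (1) follows directly from the general results on relative Fourier–Mukai transforms for Weierstrass fibrations quoted above, since the trivial fibration $C\times S\to S$ is in particular a Weierstrass fibration.)
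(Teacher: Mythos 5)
Your proposal is correct and takes essentially the same approach as the paper: both reduce to Lemma \ref{lem:FM_curve} by identifying the composite kernel on $C \times C \times S$ with the pullback of $\cO_{\Delta_C}[-1]$ from $C \times C$ — the paper compresses this into ``pulling back to the triple product,'' while you spell out the convolution formula, the flat base change square, and the Orlov-uniqueness step that the paper leaves implicit. In both arguments part (1) is deduced formally from part (2).
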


\begin{proof}
It suffices to prove the second statement. By Lemma \ref{lem:FM_curve}, we know that $\Phi_{\cE_C} \circ \Phi_{\cE_C}$ is in fact given by $\Phi_{\cO_{\Delta_C}[-1]}$ where $\Delta_C= \{ (x, -x): x \in C \} \subset C \times C$ is the antidiagonal of the product. By pulling back to the triple product $C \times C \times S$, we similarly have
\begin{equation*}
\Phi_\cE \circ \Phi_\cE = \Phi_{\cO_\Delta}[-1]
\end{equation*}
where $\Delta=\Delta_C \times S \subset C \times C \times S$ is the relative antidiagonal in the diagram \eqref{eqn:trivial_product}. This proves the statement.
\end{proof}

\subsection{Cohomological Fourier-Mukai tranforms}

Now we turn to understanding  how Chern classes change under the Fourier-Mukai transform $\Phi_\cE$. We denote the projections from $X = C \times S$ to the two factors by $\pi_C$ and $\pi_S$. Then by the K\"unneth formula, the algebraic cohomology of $X$ can be given by
\begin{equation*}
\label{eqn:kuenneth}
H^*_{\mathrm{alg}}(X,\bZ)=H^*_{\mathrm{alg}}(C,\bZ) \otimes H^*_{\mathrm{alg}}(S,\bZ).
\end{equation*}

We note that
\begin{align*}
H^*_{\mathrm{alg}}(C,\bZ) &= H^0(C, \bZ) \oplus H^2(C, \bZ) \\
H^*_{\mathrm{alg}}(S,\bZ) &= H^0(S, \bZ) \oplus \bZ[H_S] \oplus H^4(S, \bZ)
\end{align*}
are both direct sums of $1$-dimensional sublattices. Let $e_0, e_1$ and respectively $f_0, f_1, f_2$ be the generators of these $1$-dmensional lattices, then $e_i \otimes f_j$ form an integral basis of $H^*_{\mathrm{alg}}(X,\bZ)$. For any $F \in D^b(C \times S)$, its Chern character will be given by $6$ integers under this basis, say
\begin{equation}
\label{eqn:normal_chern}
\chern(F)= \sum_{i,j}a_{ij} e_i\otimes f_j.
\end{equation}
And for the convenience of visualisation, we organize them in a matrix. By abuse of notation, we write
\begin{equation}
\label{eqn:matrix}
\chern(F)=
\begin{pmatrix}
a_{00} & a_{01} & a_{02} \\
a_{10} & a_{11} & a_{12}
\end{pmatrix}.
\end{equation}
Note that $a_{ij}$ contributes the class $a_{ij}(e_i\otimes f_j)$ to the $(i+j)$-th component of $\chern(F)$. The following two observations will help us determine the image of $\chern(F)$ under the cohomological Fourier-Mukai transform $\Phi^H_\cE$.

\begin{lemma}
\label{lem:FM_box}
For any $F_C \in D^b(C)$ and $F_S \in D^b(S)$, we have
\begin{equation*}
\Phi_\cE(\pi_C^*F_C \otimes \pi_S^*F_S) =  \pi_C^*\Phi_{\cE_C}(F_C) \otimes \pi_S^*F_S.
\end{equation*}
\end{lemma}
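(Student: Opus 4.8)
The plan is to unwind the definition of the relative integral functor $\Phi_\cE$ and reduce the computation to two standard tools: the projection formula, to extract the $S$-direction factor $\pi_S^\ast F_S$, and flat base change, to recognise the $C$-direction as the genuine curve transform $\Phi_{\cE_C}$. Throughout I write $\Phi_\cE(-) = \bR\pi_{13,\ast}(\cE \otimes \pi_{23}^\ast(-))$, where $\cE = \pi_{12}^\ast\cE_C$ and the projections are those of the fiber diagram \eqref{eqn:trivial_product}; I also write $\mathrm{pr}_1,\mathrm{pr}_2 : C\times C \to C$ for the two projections, so that $\Phi_{\cE_C}(-) = \bR\mathrm{pr}_{1,\ast}(\cE_C \otimes \mathrm{pr}_2^\ast(-))$.

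First I would compute the integrand. Since pullback is monoidal and the compositions $\pi_C\circ\pi_{23}$ and $\pi_S\circ\pi_{23}$ are the projections of $C\times C\times S$ onto the second $C$-factor and onto $S$ respectively, one gets
\[
\pi_{23}^\ast(\pi_C^\ast F_C \otimes \pi_S^\ast F_S) \;=\; (\pi_C\circ\pi_{23})^\ast F_C \otimes (\pi_S\circ\pi_{23})^\ast F_S.
\]
The crucial observation is that the projection onto $S$ factors through $\pi_{13}$ as well, i.e.\ $\pi_S\circ\pi_{23} = \pi_S\circ\pi_{13}$, so the second tensor factor equals $\pi_{13}^\ast(\pi_S^\ast F_S)$. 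The projection formula for $\pi_{13}$ then pulls $\pi_S^\ast F_S$ out of the pushforward, leaving
\[
\Phi_\cE(\pi_C^\ast F_C \otimes \pi_S^\ast F_S) \;=\; \bR\pi_{13,\ast}\bigl(\cE \otimes (\pi_C\circ\pi_{23})^\ast F_C\bigr) \otimes \pi_S^\ast F_S.
\]

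For the remaining pushforward I would observe that its integrand is pulled back from $C\times C$ along $\pi_{12}$: indeed $\cE = \pi_{12}^\ast\cE_C$ by definition, and $(\pi_C\circ\pi_{23})^\ast F_C = \pi_{12}^\ast\mathrm{pr}_2^\ast F_C$ since the projection to the second $C$-factor equals $\mathrm{pr}_2\circ\pi_{12}$. Thus the integrand is $\pi_{12}^\ast(\cE_C \otimes \mathrm{pr}_2^\ast F_C)$. Now the square formed by $\pi_{12}$ and $\pi_{13}$ over the maps $\mathrm{pr}_1 : C\times C \to C$ and $\pi_C : C\times S \to C$ is cartesian, and $\mathrm{pr}_1$ is flat; flat base change therefore gives $\bR\pi_{13,\ast}\pi_{12}^\ast \cong \pi_C^\ast\,\bR\mathrm{pr}_{1,\ast}$. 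Applying this to $\cE_C \otimes \mathrm{pr}_2^\ast F_C$ identifies the pushforward with $\pi_C^\ast\Phi_{\cE_C}(F_C)$, and combining with the previous display yields the claim.

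The computation is essentially bookkeeping, so the only points demanding care are (i) verifying that the relevant square is genuinely cartesian and that $\mathrm{pr}_1$ is flat, so that derived base change applies, and (ii) keeping the two roles of each $C$-factor (source versus target of the transform) straight, so that the projection $\mathrm{pr}_1$ appearing in base change is precisely the one matching the definition of $\Phi_{\cE_C}$. Since $\cE_C$ is a line bundle and all the pullbacks are along flat (indeed smooth) morphisms, there are no further derived subtleties to address.
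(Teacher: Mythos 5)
Your proof is correct and follows essentially the same route as the paper's: both use the observation that $\pi_S^\ast F_S$ pulls back to the same object along either projection $C\times C\times S\to C\times S$, apply the projection formula to extract the factor $\pi_S^\ast F_S$, and then reduce to the identity $\Phi_\cE(\pi_C^\ast F_C)=\pi_C^\ast\Phi_{\cE_C}(F_C)$ --- which the paper treats as ``straightforward'' (a trivial-family argument) and which you justify more carefully via flat base change on the cartesian square. The only cosmetic differences are your opposite (but, by the symmetry of the kernel $\cE_C$, equivalent) choice of which projection plays the role of the source, and that the flatness hypothesis actually invoked in base change is that of $\pi_C$ rather than $\mathrm{pr}_1$ --- immaterial here, since every map in sight is a flat projection.
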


\begin{proof}
We first show the case when $F_S=\cO_S$ is trivial. This is straightforward, since $\pi_C^*F_C$ is a trivial family of sheaves $F_C$ parametrized by $S$. The relative Fourier-Mukai transform $\Phi_\cE$ takes it to a trivial family of sheaves $\Phi_{\cE_C}(F_C)$ parametrized by the same base $S$. That is
\begin{equation*}
\Phi_\cE(\pi_C^*F_C) = \pi_C^*\Phi_{\cE_C}(F_C).
\end{equation*}

To show the general case, we first observe that
$$ \pi_{13}^*\pi_S^*F_S = \pi_3^*F_S = \pi_{23}^*\pi_S^*F_S. $$
Together with the projection formula, we have
\begin{align*}
&\ \Phi_\cE(\pi_C^*F_C \otimes \pi_S^*F_S) \\
= &\ \pi_{23*}(\cE \otimes \pi_{13}^*\pi_C^*F_C \otimes \pi_{13}^*\pi_S^*F_S) \\
= &\ \pi_{23*}(\cE \otimes \pi_{13}^*\pi_C^*F_C \otimes \pi_{23}^*\pi_S^*F_S) \\
= &\ \pi_{23*}(\cE \otimes \pi_{13}^*\pi_C^*F_C) \otimes \pi_S^*F_S \\
= &\ \Phi_\cE(\pi_C^*F_C) \otimes \pi_S^*F_S \\
= &\ \pi_C^*\Phi_{\cE_C}(F_C) \otimes \pi_S^*F_S.
\end{align*}
This finishes the proof.
\end{proof}

\begin{lemma}
\label{lem:functorial}
For any $F_C \in D^b(C)$ and $F_S \in D^b(S)$, we have
\begin{equation*}
\chern(\pi_C^*F_C \otimes \pi_S^*F_S) = \pi_C^* \chern(F_C) \otimes \pi_S^* \chern(F_S).
\end{equation*}
\end{lemma}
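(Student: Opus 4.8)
The plan is to deduce the identity formally from two standard structural properties of the Chern character, combined with the K\"unneth decomposition already recorded above. Recall that $\chern$ descends to a ring homomorphism out of $K$-theory, and as such it enjoys (i) \emph{functoriality under pullback}: for any morphism $g$ of smooth projective varieties and any $E$ in the derived category of the target, $\chern(g^* E) = g^* \chern(E)$; and (ii) \emph{multiplicativity}: for objects $A, B \in D^b(X)$ one has $\chern(A \otimes B) = \chern(A) \cup \chern(B)$ in $H^*_{\mathrm{alg}}(X,\bZ)$, where the tensor product is taken in $K$-theory. Both are classical (they are part of the Grothendieck--Riemann--Roch package) and I would simply cite them rather than reprove them.

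First I would apply (ii) with $A = \pi_C^* F_C$ and $B = \pi_S^* F_S$ to write
\[
\chern(\pi_C^* F_C \otimes \pi_S^* F_S) = \chern(\pi_C^* F_C) \cup \chern(\pi_S^* F_S).
\]
Here I would remark that, since $\pi_C$ and $\pi_S$ are the (flat) projections off the product $X = C \times S$, the pullbacks involve no derived subtlety, and the class of the tensor product in $K$-theory is the product of the two classes, so multiplicativity applies directly. Next I would apply (i) to each factor, namely $\chern(\pi_C^* F_C) = \pi_C^* \chern(F_C)$ and $\chern(\pi_S^* F_S) = \pi_S^* \chern(F_S)$, and substitute to obtain
\[
\chern(\pi_C^* F_C \otimes \pi_S^* F_S) = \pi_C^* \chern(F_C) \cup \pi_S^* \chern(F_S).
\]
The final step is to read off the right-hand side through the K\"unneth isomorphism $H^*_{\mathrm{alg}}(X,\bZ) = H^*_{\mathrm{alg}}(C,\bZ) \otimes H^*_{\mathrm{alg}}(S,\bZ)$: the cup product $\pi_C^*(-) \cup \pi_S^*(-)$ of pullbacks from the two factors is precisely the cross product, i.e.\ the image of the elementary tensor $\chern(F_C) \otimes \chern(F_S)$, which is exactly the class denoted $\pi_C^* \chern(F_C) \otimes \pi_S^* \chern(F_S)$ in the statement.

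I do not expect any genuine obstacle: the lemma is a purely formal consequence of multiplicativity and functoriality of $\chern$ together with the K\"unneth decomposition. The only points meriting a sentence of care are notational bookkeeping -- interpreting the symbol $\otimes$ in the conclusion as the K\"unneth cross product (equivalently, the cup product of the two pullbacks) -- and flagging that it is precisely the multiplicativity of $\chern$ that licenses passing from the tensor product of the two sheaves to the product of their cohomology classes.
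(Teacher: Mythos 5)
Your proof is correct and follows exactly the paper's route: the paper's own proof is the one-line observation that taking Chern characters commutes with tensor products and pullbacks, which is precisely the multiplicativity and functoriality you invoke (your added care about reading the cup product of pullbacks as the K\"unneth cross product is a reasonable expansion of the same argument).
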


\begin{proof}
This is clear since taking Chern characters commutes with tensor products and pullbacks.
\end{proof}

Finally, we are ready to compute the cohomological Fourier-Mukai transform
\begin{equation*}
\Phi^H_\cE: H^*_{\mathrm{alg}}(X, \bZ) \lra H^*_{\mathrm{alg}}(X, \bZ).
\end{equation*}
Note that $\Phi^H_\cE$ is a linear map. Therefore it suffices to determine the images of classes in a basis of $H^*_{\mathrm{alg}}(X, \bZ)$, which can be achieved by making appropriate choices of $F_C$ and $F_S$.

\begin{proposition}
\label{prop:product_change_class}
For any $F \in D^b(X)$, following the notation in \eqref{eqn:matrix} we write
\begin{equation*}
\chern(F)=
\begin{pmatrix}
a_{00} & a_{01} & a_{02} \\
a_{10} & a_{11} & a_{12}
\end{pmatrix}.
\end{equation*}
Then the Chern character of $\Phi_\cE(F)$ is given by
\begin{equation*}
\chern(\Phi_\cE(F))=
\begin{pmatrix}
a_{10} & a_{11} & a_{12} \\
-a_{00} & -a_{01} & -a_{02}
\end{pmatrix}.
\end{equation*}
\end{proposition}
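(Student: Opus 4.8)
The plan is to confine all the content to the single-curve transform $\Phi_{\cE_C}$, whose cohomological action is recorded in Lemma \ref{lem:FM_curve}(3), and then to transport it along the $S$-direction for free using the product structure. Throughout I regard $\Phi^H_\cE$ as the $\bZ$-linear map on $H^*_{\mathrm{alg}}(X,\bZ)$ characterised by $\chern \circ \Phi_\cE = \Phi^H_\cE \circ \chern$, and likewise $\Phi^H_{\cE_C}$ on the curve.

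\emph{Reduction to a simple tensor.} First I would take arbitrary $F_C \in D^b(C)$ and $F_S \in D^b(S)$ and compute, using Lemma \ref{lem:FM_box} (to pull the transform through the product) followed by Lemma \ref{lem:functorial} (multiplicativity of $\chern$ under external tensor):
\begin{align*}
\chern(\Phi_\cE(\pi_C^*F_C \otimes \pi_S^*F_S))
&= \chern(\pi_C^*\Phi_{\cE_C}(F_C) \otimes \pi_S^*F_S) \\
&= \pi_C^*\chern(\Phi_{\cE_C}(F_C)) \otimes \pi_S^*\chern(F_S) \\
&= \pi_C^*\Phi^H_{\cE_C}(\chern(F_C)) \otimes \pi_S^*\chern(F_S).
\end{align*}
Note that no Grothendieck--Riemann--Roch computation on $X$ is needed: Lemma \ref{lem:FM_box} is an identity of honest objects, so I simply apply $\chern$ to both sides, and all the nontriviality is absorbed into the already-known map $\Phi^H_{\cE_C}$. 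Since the left-hand side equals $\Phi^H_\cE(\chern(F_C) \otimes \chern(F_S))$ by Lemma \ref{lem:functorial}, this shows
\[
\Phi^H_\cE\bigl(\chern(F_C) \otimes \chern(F_S)\bigr) = \Phi^H_{\cE_C}(\chern(F_C)) \otimes \chern(F_S).
\]

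\emph{Extension to the basis.} Next I would check that these simple tensors span, so that $\bZ$-linearity determines $\Phi^H_\cE$ on all of $H^*_{\mathrm{alg}}(X,\bZ)$. The Chern character is surjective onto $H^*_{\mathrm{alg}}(C,\bZ)$, since $\chern(\OO_C)=e_0$ and $\chern(\OO_x)=e_1$ for a point $x \in C$; it is surjective onto $H^*_{\mathrm{alg}}(S,\bZ)$ as well, since $\chern(\OO_S)=f_0$, $\chern(\OO_y)=f_2$ for a point $y \in S$, and $\chern(\OO_S(H_S))=f_0+f_1+\tfrac12 H_S^2\,f_2$ has integral coefficients (because $H_S^2$ is even on a K3 surface), whence $f_1$ is an integer combination of realisable classes. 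Bilinearity in the displayed formula then yields, for every basis element,
\[
\Phi^H_\cE(e_i \otimes f_j) = \Phi^H_{\cE_C}(e_i) \otimes f_j.
\]

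\emph{Reading off the matrix.} Finally I would substitute $\Phi^H_{\cE_C}(e_0)=-e_1$ and $\Phi^H_{\cE_C}(e_1)=e_0$, which are exactly the two instances of Lemma \ref{lem:FM_curve}(3). Applying this to $\chern(F)=\sum_{i,j}a_{ij}\,e_i\otimes f_j$ gives
\[
\Phi^H_\cE(\chern(F)) = \sum_j a_{1j}\,e_0\otimes f_j \;-\; \sum_j a_{0j}\,e_1\otimes f_j,
\]
and recording the coefficients in the matrix convention of \eqref{eqn:matrix} produces precisely the asserted transform. I do not expect a serious obstacle: the product structure (Lemma \ref{lem:FM_box}) removes any need for a Todd correction in the $S$-direction, so the only place requiring a little care is the surjectivity/realisability argument for the basis classes, in particular producing the class $f_1$ integrally, which is where the evenness of $H_S^2$ on the K3 surface enters.
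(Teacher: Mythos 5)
Your proof is correct and takes essentially the same route as the paper: both arguments reduce to the curve-level transform via Lemma \ref{lem:FM_box} and Lemma \ref{lem:functorial}, then pin down the linear map $\Phi^H_\cE$ on a spanning set of realisable classes $\chern(F_C)\otimes\chern(F_S)$ and conclude by ($\bZ$-bi)linearity, reading off the matrix from Lemma \ref{lem:FM_curve}(3). The only variation is how $f_1$ is reached: the paper exhibits a sheaf whose Chern character is exactly $f_1$ (a line bundle of degree $\tfrac{1}{2}H_S^2$ on a smooth curve in $|H_S|$), whereas you realise $f_1$ as the integral combination $\chern(\cO_S(H_S))-\chern(\cO_S)-\tfrac{1}{2}H_S^2\,\chern(\cO_x)$ --- both devices resting on the evenness of $H_S^2$ on a K3, and yours avoiding the need for a smooth member of $|H_S|$.
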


\begin{proof}
To apply the above lemma to deduce the formula, we choose candidates for $F_C$ and $F_S$ as follows.
\begin{itemize}
\item Let $F^0_C=\cO_C$ be the structure sheaf, then $\chern(F^0_C)=(1,0)$;
\item Let $F^1_C=\cO_y$ be a skyscraper sheaf of any closed point $y\in C$, then $\chern(F^1_S)=(0,1)$;
\item Let $F^0_S=\cO_S$ be the structure sheaf, then $\chern(F^0_S)=(1,0,0)$;
\item Pick a smooth curve in the linear system $D \in |H_S|$, and let $F^1_S$ be a line bundle on $D$ of degree $\frac{H^2}{2}$, then it is easy to find that $\chern(F^1_S)=(0,1,0)$;
\item Let $F^2_S=\cO_x$ be a skyscraper sheaf of any closed point $x\in S$, then $\chern(F^2_S)=(0,0,1)$.
\end{itemize}
By applying Lemma \ref{lem:functorial}, it is straightforward to check that the following is true:
\begin{equation*}
\chern(\pi_C^*F^i_C \otimes \pi_S^*F^j_S)=e_i\otimes f_j
\end{equation*}
under the notation in \eqref{eqn:normal_chern}. That is, one entry is ``$1$" and all the other entries are ``$0$" in the matrix form of the Chern character in \eqref{eqn:matrix}.

Now we look at the Chern characters after applying the Fourier-Mukai transform $\Phi_\cE$. First of all, by Lemma \ref{lem:FM_curve}, we have that $\chern(\Phi_{\cE_C}(F_C^0))=(0,-1)$ and $\chern(\Phi_{\cE_C}(F_C^1))=(1,0)$.

We can also combine Lemma \ref{lem:FM_box} and Lemma \ref{lem:functorial} to get
\begin{align*}
\chern(\Phi_\cE(\pi_C^*F_C \otimes \pi_S^*F_S)) &= \chern (\pi_C^*\Phi_{\cE_C}(F_C) \otimes \pi_S^*F_S) \\
&= \pi_C^*\chern(\Phi_{\cE_C}(F_C)) \otimes \pi_S^*\chern(F_S).
\end{align*}
Applying this formula, it is straightforward to check that the following is true:
\begin{equation*}
\chern(\Phi_\cE(\pi_C^*F^i_C \otimes \pi_S^*F^j_S))=(-1)^{i+1} e_{1-i} \otimes f_j
\end{equation*}
under the notation in \eqref{eqn:normal_chern}. In the matrix form, this verifies the formula in the statement for objects $\pi_C^*F^i_C \otimes \pi_S^*F^j_S \in D^b(X)$. Since their Chern characters form a basis of $H^*_{\mathrm{alg}}(X,\bZ)$, this shows that
\begin{equation*}
\Phi^H_\cE(\begin{pmatrix}
a_{00} & a_{01} & a_{02} \\
a_{10} & a_{11} & a_{12}
\end{pmatrix})=\begin{pmatrix}
a_{10} & a_{11} & a_{12} \\
-a_{00} & -a_{01} & -a_{02}
\end{pmatrix},
\end{equation*}
as desired.
\end{proof}

We remind the reader again that the degrees of various components are slightly mixed up. For instance, $a_{01}$ moves from $\chern_1$ to $\chern_0$ after the Fourier-Mukai transform, while $a_{10}$ moves from $\chern_1$ to $\chern_2$, etc. This can be easily seen from the matrix form, as entries lying on the same antidiagonal represent classes in the same degree.

\subsection{Product of cohomology classes}\label{sec-intersecpro}

For later convenience we compute the cup products of cohomology classes on $X=C\times S$. We will also give a formula for computing slopes with respect to any ample class on $X$. We follow the notations in the previous subsection. In particular, we will use alternatingly the two ways for writing the Chern classes in \eqref{eqn:normal_chern} and \eqref{eqn:matrix}.

We have seen that $e_i \otimes f_j$ for $0 \leq i \leq 1$, $0 \leq j \leq 2$ form an integral basis of $H^*_{\mathrm{alg}}(X, \bZ)$. To compute arbitrary cup products of classes in mixed degrees, it suffices to compute intersections of classes in this basis, which is given by the following lemma.

\begin{lemma}
Following the above notations, we have
\begin{equation*}
(e_{i_1} \otimes f_{j_1}) \cup (e_{i_2} \otimes f_{j_2}) = (e_{i_1} \cup e_{i_2}) \otimes (f_{j_1} \cup f_{j_2}),
\end{equation*}
where the cup products on the right hand side are the usual cup products on $C$ and $S$, respectively. In particular, the above product is always an integral multiple of $e_{i_1+i_2} \otimes f_{j_1+j_2}$.
\end{lemma}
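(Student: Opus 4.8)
The plan is to recall that, by definition of the basis introduced above, the symbol $e_i \otimes f_j$ stands for the external product $\pi_C^\ast e_i \cup \pi_S^\ast f_j \in H^\ast_{\mathrm{alg}}(X,\bZ)$, and then to reduce the assertion to the standard fact that the cross product realises $H^\ast(C)\otimes H^\ast(S)$ as the cohomology ring of $X = C\times S$ under the graded tensor product multiplication. The only genuine point to verify is that the Koszul sign appearing in that multiplication is trivial here; everything else is formal.

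First I would expand the left-hand side as
\[
(\pi_C^\ast e_{i_1} \cup \pi_S^\ast f_{j_1}) \cup (\pi_C^\ast e_{i_2} \cup \pi_S^\ast f_{j_2}),
\]
and use graded commutativity of the cup product to move $\pi_S^\ast f_{j_1}$ past $\pi_C^\ast e_{i_2}$. This introduces the sign $(-1)^{\deg(\pi_S^\ast f_{j_1})\cdot\deg(\pi_C^\ast e_{i_2})}$. Since every class in $H^\ast_{\mathrm{alg}}(C,\bZ)$ and $H^\ast_{\mathrm{alg}}(S,\bZ)$ is algebraic and therefore concentrated in even degree — explicitly $\deg(\pi_S^\ast f_{j_1}) = 2j_1$ and $\deg(\pi_C^\ast e_{i_2}) = 2i_2$ — the exponent $4\,i_2 j_1$ is even and the sign equals $+1$.

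Next I would invoke the fact that pullback along a morphism is a homomorphism of graded rings, so that $\pi_C^\ast e_{i_1}\cup\pi_C^\ast e_{i_2} = \pi_C^\ast(e_{i_1}\cup e_{i_2})$ and likewise for the $S$-factors. Reassembling the four terms yields
\[
\pi_C^\ast(e_{i_1}\cup e_{i_2}) \cup \pi_S^\ast(f_{j_1}\cup f_{j_2}) = (e_{i_1}\cup e_{i_2})\otimes(f_{j_1}\cup f_{j_2}),
\]
which is the claimed identity. For the last sentence I would use the dimensions: since $\dim C = 1$, the class $e_{i_1}\cup e_{i_2}\in H^{2(i_1+i_2)}(C)$ is an integer multiple of $e_{i_1+i_2}$ when $i_1+i_2\leq 1$ and vanishes otherwise, and since $\dim S = 2$, the class $f_{j_1}\cup f_{j_2}$ is an integer multiple of $f_{j_1+j_2}$ when $j_1+j_2\leq 2$ and vanishes otherwise; in every degenerate case both sides are simply zero.

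I do not anticipate any real obstacle: the substance is entirely the graded-ring isomorphism $H^\ast(C\times S)\cong H^\ast(C)\otimes H^\ast(S)$ together with the triviality of the Koszul signs on even-degree classes. The one place that warrants care is precisely that sign, which is why I would keep the degree bookkeeping explicit rather than merely citing the K\"unneth formula.
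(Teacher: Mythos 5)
Your proof is correct and takes essentially the same approach as the paper: the paper disposes of the lemma in one line by citing that the K\"unneth formula gives a \emph{ring} isomorphism, and your argument simply unpacks that citation by verifying multiplicativity directly (external-product expansion, the trivial Koszul sign on even-degree algebraic classes, and the fact that $\pi_C^\ast$ and $\pi_S^\ast$ are graded ring homomorphisms). The explicit sign bookkeeping is careful but does not constitute a different route.
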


\begin{proof}
This is immediate since the K\"unneth formula gives a ring isomorphism.
\end{proof}

We can write the product more explicitly in the following way. Note that the degree $2$ and degree $4$ components of $H^*_{\mathrm{alg}}(X, \bZ)$ are given by
\begin{align*}
\ns(X) = H^2_{\mathrm{alg}}(X, \bZ) &= \bZ (e_0 \otimes f_1) \oplus \bZ (e_1 \otimes f_0), \\
H^4_{\mathrm{alg}}(X, \bZ) &= \bZ (e_0 \otimes f_2) \oplus \bZ (e_1 \otimes f_1),
\end{align*}
respectively. For simplicity, we further assume that $H_S^2=2d$ (as an intersection of divisor classes on the K3 surface $S$).

The intersections of two divisor classes are given by
\begin{align*}
(e_0 \otimes f_1) \cup (e_0 \otimes f_1) &= 2d \cdot (e_0 \otimes f_2), \\
(e_0 \otimes f_1) \cup (e_1 \otimes f_0) &= e_1 \otimes f_1, \\
(e_1 \otimes f_0) \cup (e_1 \otimes f_0) &= 0.
\end{align*}
And the intersections of a divisor class and a curve class are given by
\begin{align*}
(e_0 \otimes f_1) \cup (e_0 \otimes f_2) &= 0, \\
(e_0 \otimes f_1) \cup (e_1 \otimes f_1) &= 2d, \\
(e_1 \otimes f_0) \cup (e_0 \otimes f_2) &= 1, \\
(e_1 \otimes f_0) \cup (e_1 \otimes f_1) &= 0.
\end{align*}
And the only non-trivial triple intersection of divisor classes under the above basis is given by
\begin{equation}
\label{eqn:triple}
(e_0 \otimes f_1) \cup (e_0 \otimes f_1) \cup (e_1 \otimes f_0) = 2d.
\end{equation}

Now we determine the ample cone of $X$.

\begin{lemma}
\label{lem:ample-cone}
The ample cone of $X$ is given by
\begin{equation*}
\amp(X) = \{ \alpha (e_0 \otimes f_1) + \beta (e_1 \otimes f_0) \mid \alpha > 0, \beta > 0 \}.
\end{equation*}
\end{lemma}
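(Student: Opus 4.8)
The plan is to prove the two inclusions separately, using the product structure of $X$ for one direction and intersection against explicit test curves for the other. Throughout I would write $D := e_0 \otimes f_1$ and $H := e_1 \otimes f_0$, and recall from the discussion above that $\ns(X) = \bZ D \oplus \bZ H$, so that $\ns(X)_{\mathbb{R}} = \mathbb{R} D \oplus \mathbb{R} H$ and every real divisor class can be written uniquely as $\alpha D + \beta H$ with $\alpha, \beta \in \mathbb{R}$. The content of the lemma is exactly to pin down when such a class is ample.

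For the inclusion $\supseteq$, I would note that $D = \pi_S^\ast H_S$ and $H = \pi_C^\ast(\mathrm{pt})$ are the pullbacks, under the two projections, of ample classes on the factors $S$ and $C$ respectively (both factors have Picard rank $1$, with ample cones $\{t H_S : t>0\}$ and $\{t\cdot\mathrm{pt} : t>0\}$). Hence for $\alpha, \beta > 0$ the class $\alpha D + \beta H = \pi_S^\ast(\alpha H_S) + \pi_C^\ast(\beta\,\mathrm{pt})$ is a sum of pullbacks of ample $\mathbb{R}$-divisors, one from each factor, and such a class is ample on the product $C \times S$. This rests on the standard fact that the external tensor product of ample line bundles on the two factors is ample, together with the openness and convexity of the ample cone to pass from integral to $\mathbb{R}$-coefficients. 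This shows every $\alpha D + \beta H$ with $\alpha, \beta > 0$ is ample.

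For the reverse inclusion $\subseteq$, I would invoke Kleiman's criterion (or Nakai--Moishezon): an ample class meets every irreducible curve positively. I would test against two effective curve classes recorded in Section \ref{sec-intersecpro}. First, a fiber $C \times \{s\}$ of $\pi_S$ has class $e_0 \otimes f_2$, and the intersection table gives $(\alpha D + \beta H)\cdot(e_0 \otimes f_2) = \beta$, forcing $\beta > 0$. Second, a curve $\{x\} \times \Sigma$ with $x \in C$ and $\Sigma \in |H_S|$ smooth has class $e_1 \otimes f_1$, and the table gives $(\alpha D + \beta H)\cdot(e_1 \otimes f_1) = 2d\,\alpha$; since $d > 0$ (because $H_S^2 = 2d > 0$), this forces $\alpha > 0$. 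Thus any ample class lies in the asserted cone, and combined with the previous paragraph the two cones coincide. The computations are entirely routine once the intersection numbers are in hand; the only step requiring genuine care is the inclusion $\supseteq$, where one must use that a sum of pullbacks of amples from the two factors is ample (not merely nef) on the product --- this is precisely where the trivial product structure, and the fact that $\ns(X)$ has rank $2 = 1 + 1$ with no extra correspondence classes, is really used.
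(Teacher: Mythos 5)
Your proof is correct, but it takes a partly different route from the paper, whose entire proof is the single sentence that the lemma ``is an easy consequence of the Nakai-Moischezon criterion.'' Your necessity direction is exactly an unpacking of that citation: an ample class must meet the effective curve classes $e_0 \otimes f_2$ (a fiber of $\pi_S$) and $e_1 \otimes f_1$ (a curve $\{x\} \times \Sigma$ with $\Sigma \in |H_S|$) positively, and both of your intersection computations agree with the table in Section \ref{sec-intersecpro}. For sufficiency, however, you substitute a genuinely different tool: instead of verifying the Nakai--Moishezon positivity conditions, you use that an external tensor product of ample line bundles on the two factors of a product is ample (Segre embedding), together with convexity and openness of the ample cone to pass from integral to real coefficients. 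A literal Nakai--Moishezon verification would require checking $(\alpha D + \beta H)^{\dim V} \cdot V > 0$ for \emph{every} irreducible subvariety $V \subseteq X$; since $D$ and $H$ are nef this reduces to showing at least one term in the binomial expansion is positive, which in turn rests on the observation that no positive-dimensional subvariety can be contracted by both projections $\pi_C$ and $\pi_S$. Your product argument sidesteps that case analysis and exploits the triviality of the fibration more directly, at the cost of being tied to the product situation; the Nakai--Moishezon route is the one that would survive on a non-product elliptic threefold, where the external-product fact is unavailable. Either way, the argument is complete and the computations are consistent with the paper's intersection table.
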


\begin{proof}
This is an easy consequence of the Nakai-Moischezon criterion.
\end{proof}

Now we write down a formula for computing the slope with respect to an arbitrary ample class.

\begin{lemma}
\label{lem:slope}
Let $H=\alpha (e_0 \otimes f_1) + \beta (e_1 \otimes f_0)$ for some $\alpha>0$ and $\beta>0$  be an ample class on $X$. Let $F$ be a sheaf on $X$ whose Chern class is given by \eqref{eqn:normal_chern}. Then its slope with respect to $H$ is given by
\begin{equation*}
\mu_H(F) = 2d \cdot \left( \frac{a_{10}}{a_{00}} \cdot \alpha^2 + \frac{a_{01}}{a_{00}} \cdot 2\alpha\beta \right).
\end{equation*}
\end{lemma}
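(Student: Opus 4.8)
The plan is to reduce the statement to a direct intersection-theoretic computation, since the slope $\mu_H$ is determined entirely by the degree-$0$ and degree-$2$ parts of $\chern(F)$ together with the intersection numbers already tabulated earlier in Section \ref{sec-intersecpro}. First I would recall from Example \ref{paper13-eg1}, specialised to $n = \dim X = 3$, that the relevant slope is
$$\mu_H(F) = \frac{c_1(F) \cdot H^2}{\rank(F)},$$
so the whole task amounts to reading off $\rank(F)$ and $c_1(F)$ from the matrix presentation \eqref{eqn:matrix} and then intersecting with $H^2$.

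Next I would identify the relevant pieces of $\chern(F)$. Since $a_{ij}$ contributes the class $a_{ij}(e_i \otimes f_j)$ to the $(i+j)$-th graded component, the rank is $\rank(F) = a_{00}$ and the first Chern class is the degree-$2$ component $c_1(F) = a_{01}(e_0 \otimes f_1) + a_{10}(e_1 \otimes f_0)$. The remaining entries $a_{02}, a_{11}, a_{12}$ sit in higher degree and play no role in the slope.

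Then I would compute $H^2$ by expanding $H = \alpha(e_0 \otimes f_1) + \beta(e_1 \otimes f_0)$ and applying the three divisor–divisor formulas of Section \ref{sec-intersecpro}: the squared divisor term contributes $2d\alpha^2(e_0 \otimes f_2)$, the cross term contributes $2\alpha\beta(e_1 \otimes f_1)$, and the $(e_1 \otimes f_0)^2$ term vanishes, so that $H^2 = 2d\alpha^2(e_0 \otimes f_2) + 2\alpha\beta(e_1 \otimes f_1)$. Intersecting $c_1(F)$ against this with the four divisor–curve formulas, two of the four products vanish while the surviving ones give $c_1(F) \cdot H^2 = 4d\alpha\beta\, a_{01} + 2d\alpha^2\, a_{10} = 2d(\alpha^2 a_{10} + 2\alpha\beta a_{01})$. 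Dividing by $\rank(F) = a_{00}$ yields exactly the claimed expression.

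There is no genuine obstacle here; the entire argument is bookkeeping. The one point demanding care is the grading convention, namely keeping track of which basis products land on the same antidiagonal and hence which intersections vanish. Concretely, I would double-check that the vanishing of $(e_0\otimes f_1)\cup(e_0\otimes f_2)$ and of $(e_1\otimes f_0)\cup(e_1\otimes f_1)$ is what kills the spurious $\alpha^2 a_{01}$ and $\alpha\beta a_{10}$ contributions, leaving only the two terms appearing in the final formula.
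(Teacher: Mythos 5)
Your proposal is correct and follows essentially the same route as the paper: both start from the definition $\mu_H(F) = c_1(F)\cdot H^2/\rank(F)$, read off $\rank(F)=a_{00}$ and $c_1(F)=a_{01}(e_0\otimes f_1)+a_{10}(e_1\otimes f_0)$ from the matrix form, and evaluate the product using the intersection numbers of Section \ref{sec-intersecpro}, the only cosmetic difference being that the paper invokes the triple-intersection formula \eqref{eqn:triple} directly while you first expand $H^2$ and then apply the divisor--curve pairings. Your bookkeeping, including which cross terms vanish, is accurate, so the argument is complete.
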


\begin{proof}
By definition we have
\begin{align*}
\mu_H(F) &= \frac{c_1(F) \cdot H^2}{\rank(F)} \\
&= \frac{1}{a_{00}} \cdot (a_{01}(e_0 \otimes f_1) + a_{10}(e_1 \otimes f_0)) \cdot (\alpha (e_0 \otimes f_1) + \beta (e_1 \otimes f_0))^2.
\end{align*}
From here one can easily apply \eqref{eqn:triple} to get the desired formula.
\end{proof}

We immediately have the following formula for the slope of the image of $F$ under the Fourier-Mukai transform constructed in Proposition \ref{prop:FM_product}.

\begin{corollary}
\label{cor:slope}
Assume that $\Phi(F)$ is a sheaf on $X$, then its slope with respect to an ample line bundle $H'=\alpha' (e_0 \otimes f_1) + \beta' (e_1 \otimes f_0)$ for some $\alpha'>0$ and $\beta'>0$ is given by
\begin{equation*}
\mu_{H'}(\Phi(F)) = 2d \cdot \left( \frac{-a_{00}}{a_{10}} \cdot \alpha'^2 + \frac{a_{11}}{a_{10}} \cdot 2\alpha'\beta' \right).
\end{equation*}
\end{corollary}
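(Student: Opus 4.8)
The plan is to read this off as a direct composition of the two immediately preceding results: the cohomological transform of Proposition \ref{prop:product_change_class} and the slope formula of Lemma \ref{lem:slope}. The strategy is purely substitutional, since both ingredients are already in hand.

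First I would invoke Proposition \ref{prop:product_change_class} to record the Chern character of $\Phi(F) = \Phi_\cE(F)$ in the matrix form of \eqref{eqn:matrix}. Writing $\chern(F)$ as in the statement, the transformed matrix has entries obtained by the prescribed shuffle; in particular the new rank (the $(0,0)$-entry) equals $a_{10}$, the new $(0,1)$-entry equals $a_{11}$, and the new $(1,0)$-entry equals $-a_{00}$. These are precisely the three entries that feed into the slope computation.

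Next, since $\Phi(F)$ is assumed to be a genuine sheaf, I would apply the formula of Lemma \ref{lem:slope} to $\Phi(F)$ relative to the ample class $H' = \alpha'(e_0 \otimes f_1) + \beta'(e_1 \otimes f_0)$, simply reading off the relevant entries of the transformed matrix. Concretely, replacing $a_{00} \mapsto a_{10}$, $a_{01} \mapsto a_{11}$, and $a_{10} \mapsto -a_{00}$ in the expression $2d\bigl(\tfrac{a_{10}}{a_{00}}\alpha^2 + \tfrac{a_{01}}{a_{00}}2\alpha\beta\bigr)$ yields exactly the claimed formula for $\mu_{H'}(\Phi(F))$.

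I do not expect any real obstacle here, as the corollary is a formal consequence of the two earlier results. The only point deserving a word of care is well-definedness: Lemma \ref{lem:slope} divides by the rank of the sheaf, which for $\Phi(F)$ is the entry $a_{10}$, so the formula is meaningful precisely when $a_{10} \neq 0$ — consistent with the denominators appearing in the statement. Beyond checking that $\Phi(F)$ being a sheaf legitimises the use of Lemma \ref{lem:slope}, the argument is mechanical.
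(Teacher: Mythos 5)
Your proposal is correct and is exactly the paper's own argument: the paper's proof is the one-line statement that the corollary follows by combining Proposition \ref{prop:product_change_class} with Lemma \ref{lem:slope}, which is precisely the substitution you carry out. Your extra remark about $a_{10}\neq 0$ being needed for the slope to be defined is a reasonable point of care but not a deviation.
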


\begin{proof}
This is a combination of Proposition \ref{prop:product_change_class} and Lemma \ref{lem:slope}.
\end{proof}

From Lemma \ref{lem:slope} and Corollary \ref{cor:slope}, we see that if $\alpha \gg \beta$, then the polarization is ``fiber-like". That is, the slope it produces will be very close to the slope of the restriction of $F$ on a generic fiber. However, due to the mixture of the contributions from the base and the fiber, the slopes of $F$ and $\Phi(F)$ do not follow any simple relation, which makes the Fourier-Mukai transforms on families more difficult to handle.

\section{Preservation of semistability on elliptic threefolds}
\label{sec:pres}




\subsection{Hilbert polynomials of coherent sheaves}

In this section, let $X = C \times S$ where $C$ is an elliptic curve and $S$ is a projective K3 surface of Picard rank $1$.  For a coherent sheaf $E$ on $X$, its Hilbert polynomial with respect to the polarisation $\omega := H+nD$ (where $n$ is some positive integer) is given by
\begin{align*}
  \chi (E \otimes & \cO_X (m(H+nD))) \\
  &= \int_X \ch(E) \cdot \ch (\cO_X (m(H+nD)))\cdot \td (X) \\
&= \frac{m^3}{6} \cdot \rk (E) + \frac{m^2}{2} (H+nD)^2 (\ch_1(E)+\rk(E)\cdot \td_1(X)) \\
 &\text{\quad} m(H+nD) (\ch_2(E)+\ch_1(E)\cdot \td_1(X) + \rk(E) + \td_2(X)) + \chi (E).
\end{align*}
Therefore, when $\dimension E = 3$, the  reduced Hilbert polynomial of $E$ is given by
\begin{align*}
  \frac{m^3}{6} (H+nD)^3 &+ \frac{m^2}{2}(H+nD)^2 \left( \frac{\ch_1(E)}{\rk(E)} + \td_1(X)\right) \\
  & + m(H+nD)\left( \frac{\ch_2(E)}{\rk(E)} + \frac{\ch_1(E)\cdot \td_1(X)}{\rk(E)} + \td_2(X)\right) \frac{\chi(E)}{\rk(E)}.
\end{align*}
Ignoring the terms independent of $E$ and noting that $H^2=0$, we see that $\omega$-semistability of $E$, for $n \gg 0$,  is determined by lexicographical ordering with respect to the following vector and the corresponding vectors for subsheaves of $E$:

\begin{multline}
\left( \frac{\ch_1(E)}{\rk(E)} \cdot D^2, \quad \frac{\ch_1(E)}{\rk(E)} \cdot H\cdot D, \quad \left( \frac{\ch_1(E)}{\rk(E)} \cdot \td_1(X) + \frac{\ch_2(E)}{\rk(E)} \right) \cdot D, \qquad\qquad\qquad\qquad\right. \\
\left. \left( \frac{\ch_1(E)}{\rk(E)} \cdot \td_1(X) + \frac{\ch_2(E)}{\rk(E)} \right) \cdot H, \quad \frac{\chi(E)}{\rk(E)} \right).\label{0602-eq6}
\end{multline}

When $\dimension E=2$, the Hilbert polynomial of $E$ is
\[
  \frac{m^2}{2} (H+nD)^2 \ch_1(E) + m(H+nD)(\ch_2(E)+\ch_1(E)\cdot \td_1(X)) + \chi (E),
\]
and so the reduced Hibert polynomial is
\begin{equation}\label{0602-eq7}
 \frac{m^2}{2} + m \frac{ (H+nD)(\ch_2(E)+\ch_1(E)\cdot \td_1(X))}{(H+nD)^2 \ch_1(E)} + \frac{\chi (E)}{(H+nD)^2\ch_1(E)}.
\end{equation}
Ignoring the terms independent of $E$ and using $H^2=0$, and since $\td_1(X)=0$, we are left with
\[
 m \frac{ (H+nD)\ch_2(E)}{ (n\cdot 2HD + n^2D^2)\ch_1(E)} + \frac{\chi(E)}{ (n\cdot 2HD + n^2D^2)\ch_1(E)}.
\]
Therefore, if $E$  is $\omega$-semistable for $n \gg 0$, then   for any subsheaf $0 \neq E' \subseteq E$ we must have
\begin{equation}\label{0602-eq1}
  \frac{ (H+nD)\ch_2(E')}{ (n\cdot 2HD + n^2D^2)\ch_1(E')} \leq \frac{ (H+nD)\ch_2(E)}{ (n\cdot 2HD + n^2D^2)\ch_1(E)} \text{\quad for $n \gg 0$}.
\end{equation}
 In the case that the denominators on both sides of \eqref{0602-eq1} are nonzero, we have:
 \begin{itemize}
 \item The inequality \eqref{0602-eq1} is equivalent to
\begin{multline}
n^3 \cdot D \ch_2 (E') \cdot D^2 \ch_1(E) + n^2 \cdot \left( D \ch_2(E') \cdot 2HD  \ch_1(E) + H \ch_2(E')\cdot D^2 \ch_1(E)\right) \\
+ n \left( H  \ch_2(E') \cdot 2HD \ch_1(E)\right) \\
\leq n^3 \cdot D \ch_2 (E) \cdot D^2 \ch_1(E') + n^2 \cdot \left( D  \ch_2(E) \cdot 2HD  \ch_1(E') + H \ch_2(E)\cdot D^2 \ch_1(E')\right) \\
+ n \left( H  \ch_2(E) \cdot 2HD \ch_1(E')\right).\label{0602-eq2}
\end{multline}
Then,  a sufficient condition for  \eqref{0602-eq2} to hold for $n \gg 0$  is to have strict inequality for the leading coefficients on both sides, i.e.\
\begin{equation}\label{0602-eq3}
\frac{\ch_2(E')\cdot D}{\ch_1(E')\cdot D^2} < \frac{\ch_2(E)\cdot D}{\ch_1(E)\cdot D^2}.
\end{equation}
\item In the case of equality in \eqref{0602-eq3}, a sufficient condition for \eqref{0602-eq2} to hold for $n \gg 0$ is to have strict inequality for the coefficients of the $n^2$ terms, i.e.\
\begin{multline}\label{0602-eq4}
\left( D  \ch_2(E') \cdot 2HD  \ch_1(E) + H  \ch_2(E')\cdot D^2 \ch_1(E)\right) \\
< \left( D  \ch_2(E) \cdot 2HD  \ch_1(E') + H  \ch_2(E)\cdot D^2 \ch_1(E')\right),
\end{multline}
which is equivalent to
\begin{multline}
D^2\ch_1 (E') \left( \frac{D\ch_2(E')}{D^2 \ch_1(E')} \cdot 2HD \ch_1(E)-H\ch_2(E)\right) \notag\\
< D^2 \ch_1 (E) \left( \frac{D\ch_2(E)}{D^2\ch_1(E)} 2HD \ch_1 (E') - H\ch_2(E')\right). \label{0602-eq5}
\end{multline}
\end{itemize}

%

\subsection{Positivity of components of Chern character}

Now we turn to proving various positivity results. To motivate this, note that any torsion-free coherent sheaf $F$ on a smooth projective variety $X$ must have positive rank. In general, the component of $\ch(F)$ of the lowest degree always has similar effectivity property. For later convenience, here we collect some results along this line for any coherent sheaf $F$ on $X=C \times S$.


In the following, we always use the notation given by \eqref{eqn:matrix}. In other words, for any $F \in \coh(X)$, the term containing $e_i \otimes f_j$ in $\ch(F)$ is denoted by
$$ \ch_{ij}(F) = a_{ij} e_i \otimes f_j. $$
Then we write $\ch(F)$ in the form of a matrix
\begin{equation*}
\ch(F) = \begin{pmatrix} a_{00} & a_{01} & a_{02} \\
a_{10} & a_{11} & a_{12} \end{pmatrix}.
\end{equation*}

We first of all establish two formulas concerning Chern classes of restrictions of a sheaf $F$ to a generic section or a generic fiber. They will be useful in the proofs of various positivity results.

\begin{lemma}\label{lem:pos0}
Let $X = C \times S$. For any $F \in \coh(X)$, assume $\ch(F)$ is given by \eqref{eqn:matrix}. For a generic closed point $c \in C$, the Chern character of the restriction $F|_{\{c\} \times S}$ is given by
\begin{equation}
\label{eqn:res-to-section}
\ch(F|_{\{c\} \times S}) = a_{00}f_0 + a_{01}f_1 + a_{02}f_2 \in H^*_{\mathrm{alg}}(S, \bZ).
\end{equation}
Similarly, for a generic closed point $s \in S$, the Chern class of the restriction $F|_{C \times \{s\}}$ is given by
\begin{equation}
\label{eqn:res-to-fiber}
\ch(F|_{C \times \{s\}}) = a_{00}e_0 + a_{10}e_1 \in H^*_{\mathrm{alg}}(C, \bZ).
\end{equation}
\end{lemma}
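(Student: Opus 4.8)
The plan is to recognise both identities as instances of the single principle that the Chern character of a restriction to a fiber is the cohomological pullback of $\ch(F)$, valid whenever the underived restriction agrees with the derived one. Write $j_c : \{c\}\times S \hookrightarrow X$ for the inclusion of the section over $c \in C$, and recall that $\iota_s : X_s = C\times\{s\} \hookrightarrow X$ denotes the fiber inclusion over $s \in S$. By generic flatness there is a dense open $U \subseteq C$ (resp.\ $V \subseteq S$) such that $F$ is flat over $U$ via $\pi_C$ (resp.\ over $V$ via $\pi_S$); for $c \in U$ the higher Tor sheaves $\mathcal{T}or_i^{\OO_X}(F,\OO_{\{c\}\times S})$ vanish for $i>0$, so $j_c^\ast F \cong Lj_c^\ast F$, and likewise $\iota_s^\ast F \cong L\iota_s^\ast F$ for $s \in V$. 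Since $j_c$ and $\iota_s$ are closed immersions of smooth projective varieties, the Chern character commutes with derived pullback, and hence $\ch(F|_{\{c\}\times S}) = j_c^\ast \ch(F)$ and $\ch(F|_{C\times\{s\}}) = \iota_s^\ast \ch(F)$ for generic $c$ and $s$.

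Next I would compute these pullbacks on the K\"unneth basis. Since $e_i \otimes f_j = \pi_C^\ast e_i \cup \pi_S^\ast f_j$, and $\pi_C \circ j_c$ is the constant map $S \to C$ at $c$ while $\pi_S \circ j_c = \id_S$, we get $j_c^\ast(e_i \otimes f_j) = (\mathrm{const}_c^\ast e_i)\cup f_j$. A constant map sends the degree-zero class $e_0$ to $1$ and kills the positive-degree class $e_1$, so $j_c^\ast(e_0\otimes f_j) = f_j$ and $j_c^\ast(e_1\otimes f_j)=0$; summing over the matrix entries of $\ch(F)$ yields \eqref{eqn:res-to-section}. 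Symmetrically, $\pi_S\circ\iota_s$ is constant at $s$ while $\pi_C\circ\iota_s=\id_C$, so $\iota_s^\ast(e_i\otimes f_j)=e_i\cup(\mathrm{const}_s^\ast f_j)$; here $f_0\mapsto 1$ while the positive-degree classes $f_1,f_2$ are killed, giving $\iota_s^\ast(e_i\otimes f_0)=e_i$ and $\iota_s^\ast(e_i\otimes f_j)=0$ for $j\geq 1$, which is exactly \eqref{eqn:res-to-fiber}.

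The cohomological computation is entirely routine; the only point that genuinely requires care, and where the word ``generic'' is indispensable, is the first step, namely that the naive restriction $\iota_s^\ast F$ (an underived operation) computes the same class as the derived restriction $L\iota_s^\ast F$. This can fail on the non-flat locus, a proper closed subset of $C$ (resp.\ $S$), where higher Tor contributions would perturb the class; generic flatness is precisely what provides the Tor-independence off this locus. I expect no further obstacle: once derived-compatibility is secured, functoriality of $\ch$ under pullback between smooth varieties, together with the vanishing of positive-degree classes under constant maps, delivers both formulas at once.
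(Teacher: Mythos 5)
Your proof is correct, but it runs through a different mechanism than the paper's. Both arguments start identically, invoking generic flatness (EGA IV 6.9.1) to handle the word ``generic''; after that they diverge. The paper stays on the pushforward side: for the section case it tensors $F$ with $0 \to \OO_X(-\{c\}\times S) \to \OO_X \to \OO_{\{c\}\times S} \to 0$ (flatness over the DVR $\OO_{C,c}$ giving exactness on the left), reads off $\ch(i_\ast(F|_{\{c\}\times S}))$ by additivity, and then applies Grothendieck--Riemann--Roch to the closed immersion $i$, using invertibility of $\td(S)$ and $\td(C)=1$ to extract $\ch(F|_{\{c\}\times S})$ from its pushforward; the codimension-two fiber case is only asserted to be ``similar'' (strictly it needs a Koszul resolution or an iteration of the divisor trick in place of the single short exact sequence). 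You instead go through the pullback side: flatness gives Tor-vanishing, so the underived restriction agrees with $Lj_c^\ast F$ (resp. $L\iota_s^\ast F$), and then $\ch(Lf^\ast E) = f^\ast\ch(E)$ — which on a smooth variety is elementary, via a finite resolution of $F$ by vector bundles and functoriality of Chern classes — reduces everything to pulling back the K\"unneth basis, where constant maps kill the positive-degree classes. What your route buys is uniformity and economy: the section and fiber cases are literally the same computation, no GRR or Todd classes appear, and the codimension-two case needs no separate treatment. What the paper's route buys is that it stays entirely inside the pushforward/intersection-theoretic matrix formalism set up in Section 4, so every matrix entry manipulation is explicit and reusable in the later positivity lemmas. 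Your identification of the Tor-independence step as the one place where genericity is genuinely needed is exactly right, and matches the role flatness plays in the paper's argument.
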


\begin{proof}
We prove \eqref{eqn:res-to-section} first. Since $F \in \coh(X)$, we can also think of $F$ as an $\cO_C$-module. By generic flatness \cite[Theorem 6.9.1]{EGA4}, there exists a non-empty open subset $U \subset C$ such that $F|_{U \times S}$ is flat over $\cO_U$. For any closed point $c \in U$, we have the exact sequence
$$ 0 \lra F \otimes_{\cO_C} \cO_C(-c) \lra F \lra F \otimes_{\cO_C} \cO_c \lra 0. $$
Or we can write it as
$$ 0 \lra F \otimes_{\cO_X} \cO_X(-\{c\} \times S) \lra F \lra F \otimes_{\cO_X}\cO_{\{c\} \times S}  \lra 0 $$
where $i: \{c\} \times S \hookrightarrow C \times S$ is the embedding. Now we compute $\ch(F \otimes_{\cO_X}\cO_{\{c\} \times S})$. By assumption we have that
$$ \ch (F)= \begin{pmatrix} a_{00} & a_{01} & a_{02} \\ a_{10} & a_{11} & a_{12} \end{pmatrix}, $$
and it is clear that
$$ \ch(\cO_{\{c\} \times S}) = \begin{pmatrix} 0 & 0 & 0 \\ 1 & 0 & 0 \end{pmatrix}. $$
Therefore we have
$$ \ch(F \otimes_{\cO_X}\cO_{\{c\} \times S}) = \begin{pmatrix} 0 & 0 & 0 \\ a_{00} & a_{01} & a_{02} \end{pmatrix}. $$
Since $F \otimes_{\cO_X}\cO_{\{c\} \times S} = i_\ast(F|_{\{c\} \times S})$, by Grothendieck-Riemann-Roch theorem, we have
$$ i_\ast(\ch(F|_{\{c\} \times S}) \cdot \td(S)) = \ch(F \otimes_{\cO_X}\cO_{\{c\} \times S}) \cdot \td(C \times S). $$
As $\td(S)$, hence $\pi^*_S\td(S)$, is invertible, the equation simplifies to
$$ i_\ast(\ch(F|_{\{c\} \times S})) = \ch(F \otimes_{\cO_X}\cO_{\{c\} \times S}) \cdot \pi^*_C\td(C) = \begin{pmatrix} a_{00} & a_{01} & a_{02} \\ a_{10} & a_{11} & a_{12} \end{pmatrix}. $$
It follows that we have
$$ \ch(F|_{\{c\} \times S}) = a_{00}f_0 + a_{01}f_1 + a_{02}f_2 \in H^*_{\mathrm{alg}}(S, \bZ). $$

The equation \eqref{eqn:res-to-fiber} can be proved in a similar way.
\end{proof}

We start to prove the positivity results. To avoid repetitions, in the rest of the section, we always assume that $X=C \times S$ is the product of an elliptic curve and a K3 surface of Picard number $1$, and $F \in \coh(X)$.

\begin{lemma}
\label{lem:pos1}
If $\ch (F)=\begin{pmatrix} 0 & a_{01} & \ast \\ a_{10} & \ast & \ast \end{pmatrix}$, then $a_{01}, a_{10} \geq 0$.
\end{lemma}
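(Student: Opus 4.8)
The plan is to read off $a_{01}$ and $a_{10}$ from the Chern characters of the restrictions of $F$ to a generic section and a generic fiber, and then invoke the elementary positivity of the natural invariants of a torsion sheaf on a curve and on a surface. The starting observation is that the hypothesis amounts to $a_{00} = \ch_0(F) = \rank(F) = 0$, so $F$ is a rank-zero (torsion) sheaf on $X$; this is precisely the regime in which the restriction formulas of Lemma \ref{lem:pos0} produce rank-zero sheaves on the factors, whose leading Chern data are forced to be non-negative.

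For $a_{10}$ I would restrict to a generic fiber. By \eqref{eqn:res-to-fiber} in Lemma \ref{lem:pos0}, for a generic closed point $s \in S$ we have $\ch(F|_{C \times \{s\}}) = a_{00}e_0 + a_{10}e_1 = a_{10}e_1$ using $a_{00}=0$. Thus $F|_{C \times \{s\}}$ is a coherent sheaf of rank $0$ on the smooth projective curve $C$, hence torsion, supported at finitely many points. Since $e_1$ is the point class and $a_{10}$ therefore records the length of this torsion sheaf, the non-negativity of lengths gives $a_{10} = \mathrm{length}(F|_{C \times \{s\}}) \geq 0$.

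For $a_{01}$ I would restrict to a generic section. By \eqref{eqn:res-to-section} in Lemma \ref{lem:pos0}, for a generic closed point $c \in C$ we have $\ch(F|_{\{c\} \times S}) = a_{01}f_1 + a_{02}f_2$, again using $a_{00}=0$. Hence $F|_{\{c\} \times S}$ is a rank-zero coherent sheaf on the K3 surface $S$, i.e.\ a torsion sheaf supported in dimension at most $1$, and its first Chern class is $a_{01}[H_S]$. The key point is that $c_1$ of such a torsion sheaf is an effective divisor class: passing to the torsion filtration, the $0$-dimensional part contributes nothing to $c_1$, while the pure $1$-dimensional quotient has $c_1$ equal to its support cycle, which is effective. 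Since $\Pic(S) = \bZ[H_S]$ with $H_S$ ample, an effective class is a non-negative multiple of $[H_S]$, forcing $a_{01} \geq 0$.

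The computation is essentially routine once the restriction formulas are in hand, so I do not anticipate a genuine obstacle; the only step deserving explicit justification is the effectivity of $c_1$ for a torsion sheaf on the surface $S$, which is what lets me convert the abstract inequality into the concrete bound $a_{01} \geq 0$ via the Picard-rank-one assumption. The analogous curve statement (non-negativity of lengths) is immediate.
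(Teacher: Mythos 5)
Your proof is correct, but it follows a genuinely different route from the paper's. The paper argues numerically on $X$ itself: since $F$ is torsion, the Hilbert polynomial $P(m)=\chi(F\otimes L^m)$ has degree at most $2$ with leading coefficient $\tfrac{1}{2}L^2\cdot c_1(F)$, and Serre vanishing forces $P(m)>0$ for $m\gg 0$, whence $L^2\cdot c_1(F)\geq 0$ for \emph{every} ample $L$; writing $L=\alpha(e_0\otimes f_1)+\beta(e_1\otimes f_0)$ via Lemma \ref{lem:ample-cone} and letting $\alpha\gg 0$ (resp.\ $\beta\gg 0$) separates out $a_{10}\geq 0$ (resp.\ $a_{01}\geq 0$). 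You instead restrict to a generic fiber and a generic section via Lemma \ref{lem:pos0} and use positivity on the factors: non-negativity of length on the curve $C$ for $a_{10}$, and effectivity of $c_1$ of a torsion sheaf on $S$ combined with $\Pic(S)=\bZ[H_S]$ for $a_{01}$. Both arguments are sound, and since Lemma \ref{lem:pos0} precedes Lemma \ref{lem:pos1} in the paper there is no circularity in your appeal to it. Your approach is more geometric and explains \emph{what} the numbers $a_{10}$ and $a_{01}$ are (a fiberwise length, and the class of a support cycle), and it makes visible where the Picard-rank-one hypothesis enters; the trade-offs are that it needs the generic-flatness restriction formulas and the (standard, but worth citing) fact that $c_1$ of a pure $1$-dimensional sheaf on a surface is its effective support cycle, whereas the paper's Hilbert-polynomial argument is purely numerical, self-contained at this point in the text, and leans only on the description of the ample cone rather than on the product structure of $X$.
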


\begin{proof}
For any ample line bundle $L$ on $X$, we consider the Hilbert polynomial
$$P(m)=\chi(F \otimes L^m).$$
Since $F$ is a torsion sheaf, the polynomial $P(m)$ has degree at most $2$. By Riemann-Roch, the coefficient of the $m^2$-term is $\frac{1}{2}L^2 \cdot c_1(F)$. By Serre vanishing, when $m \gg 0$, we must have $P(m)>0$ because all higher cohomology groups of $F \otimes L^m$ vanish. Therefore we must have
$$L^2 \cdot c_1(F) \geq 0.$$

By Lemma \ref{lem:ample-cone}, we can write
$$L=\alpha e_0 \otimes f_1 + \beta e_1 \otimes f_0$$
for some $\alpha, \beta>0$. With the given assumption on $c_1(F)$, we have
$$L^2 \cdot c_1(T) = 2d (\alpha^2 a_{10} + 2\alpha\beta a_{01})$$
where $2d=(e_0 \otimes f_1) \cdot (e_0 \otimes f_1) \cdot (e_1 \otimes f_0) >0$. Therefore we must have
$$ \alpha^2 a_{10} + 2\alpha\beta a_{01} \geq 0 $$
for all $\alpha,\beta>0$. If we fix $\beta>0$ and take $\alpha \gg 0$, we get $a_{10} \geq 0$. If we fix $\alpha>0$ and take $\beta \gg 0$, we get $a_{01} \geq 0$, as required.
\end{proof}

\begin{lemma}
\label{lem:pos3}
If $\ch (F)= \begin{pmatrix} 0 & 0 & a_{02} \\ a_{10} & \ast & \ast \end{pmatrix}$, then $a_{02}, a_{10} \geq 0$.
\end{lemma}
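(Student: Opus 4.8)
The plan is to reduce both inequalities to the positivity of lengths of torsion sheaves obtained by restricting $F$ to a generic section and a generic fiber, using the Chern-class formulas of Lemma \ref{lem:pos0}.

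First I would dispose of $a_{10} \geq 0$. The hypothesis here is a special case of that of Lemma \ref{lem:pos1} (we have merely imposed the extra condition $a_{01} = 0$), so the conclusion $a_{10} \geq 0$ is already contained in Lemma \ref{lem:pos1}. Alternatively, restricting to a generic fiber $C \times \{s\}$ and applying the second formula of Lemma \ref{lem:pos0} gives $\ch(F|_{C \times \{s\}}) = a_{10} e_1$ (using $a_{00} = 0$); this is the Chern character of a torsion sheaf on the curve $C$ whose length equals $a_{10}$, whence $a_{10} \geq 0$.

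The new content is $a_{02} \geq 0$, and for this I would restrict $F$ to a generic section $\{c\} \times S$. By the first formula of Lemma \ref{lem:pos0}, and using $a_{00} = a_{01} = 0$, the restriction satisfies $\ch(F|_{\{c\} \times S}) = a_{02} f_2$; in particular its rank and first Chern class both vanish. The key step is to argue that this forces $F|_{\{c\} \times S}$ to be supported in dimension $0$: if it had a $1$-dimensional (divisorial) component, its first Chern class would be a nonzero effective divisor class on $S$, but since $\ns(S) = \bZ[H_S]$ every nonzero effective class is a positive multiple of $f_1 = [H_S]$, contradicting $a_{01} = 0$. Hence the restriction is a $0$-dimensional sheaf on $S$, whose $\ch_2$ equals its length; reading off $\ch_2 = a_{02} f_2$ gives $a_{02} \geq 0$.

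The computation is essentially routine once Lemma \ref{lem:pos0} is in hand; the only point requiring care is this Picard-rank-$1$ argument that vanishing of $a_{01}$ rules out a divisorial component of the restriction, so that $a_{02}$ may legitimately be interpreted as a length. A minor technical caveat throughout is that the restriction formulas hold only for a generic $c$ (respectively generic $s$), where generic flatness guarantees that the underived restriction is an honest sheaf; this is exactly the setting in which Lemma \ref{lem:pos0} is established, so no additional work is needed.
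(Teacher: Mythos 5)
Your proposal is correct and follows essentially the same route as the paper: $a_{10}\geq 0$ is quoted from Lemma \ref{lem:pos1}, and $a_{02}\geq 0$ comes from restricting to a generic section via \eqref{eqn:res-to-section}, observing that $\ch(F|_{\{c\}\times S})=a_{02}f_2$ forces $0$-dimensional support, so $a_{02}=\chi(F|_{\{c\}\times S})\geq 0$. Your explicit justification that vanishing rank and first Chern class rule out a divisorial component (via effectivity of the support class) merely fills in a step the paper leaves implicit, and is a welcome clarification rather than a different argument.
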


\begin{proof}
The inequality $a_{10} \geq 0$ follows from Lemma \ref{lem:pos1}.

By formula \eqref{eqn:res-to-section}, for a generic closed point $c \in C$, we have that
$$ \ch(F_{\{c\} \times S}) = a_{00}f_0 + a_{01}f_1 + a_{02}f_2 = a_{02}f_2. $$
This implies that $F_{\{c\} \times S}$ is supported on finitely many points, and $a_{02} = \chi(F_{\{c\} \times S}) \geq 0$.
\end{proof}



\begin{lemma}
\label{lem:chern}
Let $X = C \times S$. Let $F$ be a sheaf on $X$ with Chern character given by
\begin{equation}
\label{eqn:chern}
\ch(F) = \begin{pmatrix}
0 & 0 & \ast \\
\ast & \ast & \ast
\end{pmatrix},
\end{equation}
then $F|_{C \times \{s\}}$ has dimension $0$ for all closed points $s \in S$ except finitely many of them.
\end{lemma}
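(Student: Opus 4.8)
The plan is to convert the two vanishing entries $a_{00}=0$ and $a_{01}=0$ of the matrix \eqref{eqn:matrix} into constraints on $\supp F$, and then run a dimension count over $S$. First I would reduce the statement to a claim about the support. Since $a_{00}=\ch_0(F)=0$, the sheaf $F$ is torsion and $\dim\supp F\leq 2$. For a closed point $s\in S$ the support of $F|_{C\times\{s\}}$ is contained in $\supp F\cap(C\times\{s\})$, and since $C\times\{s\}\cong C$ is an irreducible curve, this restriction can have positive-dimensional support only if the entire fiber lies in $\supp F$. Thus it suffices to show that $Z:=\{\,s\in S : C\times\{s\}\subseteq\supp F\,\}$ is finite.

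Next I would dispose of the low-dimensional components. Decomposing $\supp F$ into its finitely many irreducible components, if a fiber $C\times\{s\}\subseteq\supp F$ meets no $2$-dimensional component then, being irreducible of dimension $1$, it must itself be one of the $1$-dimensional components; hence the components of dimension $\leq 1$ contribute only finitely many points to $Z$. This leaves the fibers swept out by the $2$-dimensional components, which split into the ``horizontal'' ones (dominant and generically finite over $S$ under $\pi_S$) and the ``vertical'' ones of the form $C\times B$ with $B\subset S$ a curve. A single vertical component would contribute a whole one-parameter family to $Z$ (as the example $F=\cO_{C\times B}$ already shows, where $\ch_1=m(e_0\otimes f_1)$), so these must be excluded.

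The step I expect to be the main obstacle is precisely ruling out the vertical components, and this is where the hypothesis $a_{01}=0$ enters. Writing $H:=e_1\otimes f_0$ (the zero section) and $D:=e_0\otimes f_1=\pi_S^\ast H_S$, the product \eqref{eqn:triple} gives $\ch_1(F)\cdot H\cdot D=2d\,a_{01}$, so $a_{01}=0$ is equivalent to $\ch_1(F)\cdot H\cdot D=0$. I would then expand $\ch_1(F)=\sum_i r_i[V_i]$ as a sum over the $2$-dimensional components $V_i$ of $\supp F$ with positive multiplicities $r_i>0$. Both $H$ and $D$ are nef ($D$ is the pullback of the ample class $H_S$, while $H$ moves in the family $\{c\}\times S$ as $c$ varies over $C$), so each $[V_i]\cdot H\cdot D\geq 0$, whereas for a vertical component $C\times B$ with $B\sim mH_S$ one computes $[C\times B]\cdot H\cdot D=m\cdot 2d>0$. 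Hence $\ch_1(F)\cdot H\cdot D=0$ forces every $2$-dimensional component to be horizontal.

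Finally I would finish the dimension count. For a horizontal component $V$ the locus $V_{\geq 1}\subseteq V$ where $\pi_S|_V$ has fibers of dimension $\geq 1$ is a proper closed subset (the generic fiber being finite), so $\dim V_{\geq 1}\leq 1$; consequently $V$ can contain $C\times\{s\}$ for only finitely many $s$, for otherwise $V_{\geq 1}$ would contain a $2$-dimensional union of fibers. Summing over the finitely many components shows that $Z$ is finite, and the lemma follows.
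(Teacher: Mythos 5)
Your proof is correct, but it takes a genuinely different route from the paper's. The paper argues via two sublemmas: first, for \emph{every} curve $B \subset S$ it shows that $F|_{C \times B}$ is torsion, by applying Grothendieck--Riemann--Roch to the inclusion $i : C \times B \hookrightarrow X$ and observing that otherwise the kernel $K$ of $F \to i_\ast i^\ast F$ would be a torsion sheaf whose $c_1$ has a strictly negative coefficient against $e_0 \otimes f_1$, contradicting effectivity (this is where $a_{00}=a_{01}=0$ enters); second, it deduces finiteness by a generic-point argument, setting $T = \pi_S(\supp(F|_{\eta \times S}))$ for $\eta$ the generic point of $C$ and noting that if $T$ were infinite it would contain a curve $B$, forcing $C \times B \subseteq \supp F$ and hence contradicting the first sublemma. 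You instead work directly with the cycle structure of $\supp F$: you write $c_1(F)$ as an effective cycle $\sum_i r_i [V_i]$ on the two-dimensional components, rule out vertical components $C\times B$ by nefness of $H$ and $D$ together with the computation $[C\times B]\cdot H\cdot D = 2dm > 0$, and then bound the fibers contained in horizontal components by Chevalley semicontinuity of fiber dimension. Both proofs turn on the same numerical insight---$a_{01}=0$ forbids any vertical two-dimensional piece of the support---but yours makes this transparent at the level of cycles and avoids GRR, at the cost of invoking the standard facts that $c_1$ of a torsion sheaf is the effective cycle of its codimension-one support and that nef divisor classes pair non-negatively with effective cycles (Kleiman); the paper's version stays within the Chern-character and restriction calculus of the section and reuses its positivity lemmas. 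Two small points to tighten: ``meets no $2$-dimensional component'' should read ``is not contained in any $2$-dimensional component'', and the decomposition $c_1(F)=\sum_i r_i[V_i]$ deserves a one-line justification (a filtration of $F$ with subquotients supported on the $V_i$, plus the vanishing of $c_1$ for sheaves supported in codimension at least $2$).
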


We will need the following two results in the proof.

\begin{sublemma}
\label{sublem:chern1}
Let $F$ be a sheaf on $X$ with Chern character given by \eqref{eqn:chern}, then for every curve $D \subset S$, $F|_{C \times D}$ is a torsion sheaf.
\end{sublemma}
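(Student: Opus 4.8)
The plan is to analyse the two-dimensional components of $\supp F$ by restricting $F$ to the generic fibre of the projection $\pi_C : X \to C$, and to show that every such component is itself a fibre $\{c_0\} \times S$ of $\pi_C$. The sublemma then follows immediately, since such a fibre meets $C \times D$ in the one-dimensional subscheme $\{c_0\} \times D$. First I would invoke the restriction formula \eqref{eqn:res-to-section} of Lemma \ref{lem:pos0}: because the Chern character of $F$ has the shape \eqref{eqn:chern}, we have $a_{00} = a_{01} = 0$, so for a generic closed point $c \in C$ the restriction satisfies $\ch(F|_{\{c\} \times S}) = a_{02} f_2$. In particular $F|_{\{c\} \times S}$ has rank $0$ and vanishing first Chern class on $S$.

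The next step is to observe that a coherent sheaf on $S$ with rank $0$ and $c_1 = 0$ must be supported in dimension $0$. This is where the hypothesis $\pic(S) = \bZ[H_S]$ enters: any one-dimensional component of its support is a curve of class $k[H_S]$ with $k \geq 1$, and the sum of these classes, weighted by the (positive) generic lengths, equals $c_1 = 0$, which is impossible unless there are no such components. Hence $F|_{\{c\} \times S}$ is zero-dimensional for $c$ in some dense open $U \subseteq C$.

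Finally I would translate this fibrewise statement into a statement about $\supp F$. Let $Z$ be a two-dimensional irreducible component of $\supp F$ (which is at most two-dimensional, since $a_{00} = 0$ forces $F$ to be a torsion sheaf on $X$). If $\pi_C|_Z : Z \to C$ were dominant, the fibre-dimension theorem would give $\dim(Z \cap (\{c\} \times S)) = 1$ for generic $c$; but $Z \subseteq \supp F$ implies $Z \cap (\{c\} \times S) \subseteq \supp(F|_{\{c\} \times S})$, contradicting the zero-dimensionality just established. Therefore $\pi_C(Z)$ is a single point $c_0$, and by dimension $Z = \{c_0\} \times S$. Since $S \not\subseteq D$, no such $Z$ is contained in $C \times D$, and each meets $C \times D$ in $\{c_0\} \times D$, which is one-dimensional; the lower-dimensional part of $\supp F$ contributes at most one-dimensionally as well. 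Thus $\supp(F|_{C \times D}) = \supp F \cap (C \times D)$ has dimension at most $1$, i.e.\ $F|_{C \times D}$ is a torsion sheaf.

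The main obstacle is the passage from the generic-fibre computation to the global structure of the components of $\supp F$; once the fibre-dimension dichotomy is set up carefully, making sure that the two dense open subsets of $C$ produced along the way overlap, the remaining bookkeeping is routine. An alternative route would avoid the fibre restriction entirely and instead write $c_1(F) = \sum_Z m_Z [Z]$ over the codimension-one components $Z \subseteq \supp F$ with positive multiplicities $m_Z$, note that each effective $[Z]$ pairs non-negatively with the covering curve class $e_1 \otimes f_1$ so that its $e_0 \otimes f_1$-coefficient is $\geq 0$, and then use $a_{01} = 0$ to conclude that no $Z$ is of the form $C \times D'$; I find the restriction argument cleaner because it reuses Lemma \ref{lem:pos0} directly.
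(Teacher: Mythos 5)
Your proof is correct, and it follows a genuinely different route from the paper's. The paper argues numerically and never leaves the curve $D$: it forms the exact sequence $0 \to K \to F \to i_*i^*F \to 0$ for the inclusion $i : C\times D \hookrightarrow X$, computes $\ch(i_*i^*F)$ by Grothendieck--Riemann--Roch (its $e_0\otimes f_1$-component is $\rk(i^*F)\deg(D)$), and then uses that $K$ is torsion, so $\ch_1(K)$ must be effective, which together with $a_{01}(F)=0$ forces $\rk(i^*F)=0$. You instead restrict along the other projection: by \eqref{eqn:res-to-section} the generic restriction $F|_{\{c\}\times S}$ has rank $0$ and $c_1=0$, hence $0$-dimensional support (for this step ampleness of $H_S$ alone suffices, since every curve class pairs positively with $H_S$; the Picard-rank-one hypothesis is not what is really used), and a fiber-dimension argument then shows that every $2$-dimensional irreducible component of $\supp F$ is a horizontal section $\{c_0\}\times S$, so that $\supp F \cap (C\times D)$ has dimension at most $1$. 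What each approach buys: the paper's computation is local to the given curve $D$ and stays stylistically parallel to the other positivity lemmas of the section (Lemma \ref{lem:pos1} and its relatives), while your argument proves a stronger structural statement about $\supp F$ that in fact nearly subsumes Sublemma \ref{sublem:chern2} and Lemma \ref{lem:chern} in one stroke, since from your description of the support one reads off directly that $F|_{C\times \{s\}}$ is $0$-dimensional for all but finitely many $s \in S$. The two points needing care, both of which you flag, are intersecting the two dense open subsets of $C$ (the flatness locus used for \eqref{eqn:res-to-section} and the locus over which fibers of $Z \to C$ have dimension $1$), and interpreting ``torsion'' on the possibly singular surface $C \times D$ as rank zero at its generic points, which is exactly what support of dimension at most $1$ gives.
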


\begin{proof}
We write the inclusion
$$ i: C \times D \hookrightarrow C \times S, $$
then $F|_{C \times D} = i^*F$. We need to show that $\rk(i^*F)=0$.

Since $i^*F$ is the restriction of $F$ on $C \times D$, we have an exact sequence
$$ 0 \lra K \lra F \lra i_*i^*F \lra 0. $$
By condition \eqref{eqn:chern} we know that $F$ is a torsion sheaf, hence so is $K$, therefore $\ch_1(K)$ must be effective.

We first compute $\ch(i_*i^*F)$. We apply Grothendieck-Riemann-Roch on the closed immersion $i$
$$ \ch(i_*i^*F)=i_*(\ch(i^*F) \cdot \td(N_{C \times D/C \times S}^\vee)), $$
where $N_{C \times D/C \times S}^\vee$ is the conormal bundle of $C \times D$ in $C \times S$. Notice that both $i^*F$ and $N_{C \times D/C \times S}^\vee$ are sheaves on $C \times D$, hence we have
\begin{align*}
\ch(i^*F) &= (\rk(i^*F), \ast, \ast); \\
\td(N_{C \times D/C \times S}^\vee) &= (1, \ast, \ast).
\end{align*}
Therefore we have
$$ \ch(i^*F) \cdot \td(N_{C \times D/C \times S}^\vee) = (\rk(i^*F), a_1, a_2), $$
where $a_1 \in \HH^2(C \times D)$ and $a_2 \in \HH^4(C \times D)$. After pushforward, we get
$$ i_*(\ch(i^*F) \cdot \td(N_{C \times D/C \times S}^\vee)) = i_*(\rk(i^*F), a_1, a_2). $$
Notice that we have
$$ i_*(\rk(i^*F)) = \rk(i^*F)\mathrm{PD}(C \times D) = \rk(i^*F) \deg(D) \cdot e_0 \otimes f_1, $$
where $\mathrm{PD}$ is the Poincar\'{e} dual, and $\deg(D)$ is the degree of the curve $D$ in $S$.
Moreover, we have
\begin{align*}
i_*(a_1) &\in \HH^4(C \times S); \\
i_*(a_1) &\in \HH^6(C \times S).
\end{align*}
Therefore in the matrix notation, we have
$$ \ch(i_*i^*F) = i_*(\rk(i^*F), a_1, a_2) = \begin{pmatrix}
0 & \rk(i^*F) \deg(D) & \ast \\
0 & \ast & \ast
\end{pmatrix}. $$
It follows that
$$ \ch(K) = \ch(F)-\ch(i_*i^*F) = \begin{pmatrix}
0 & -\rk(i^*F) \deg(D) & \ast \\
\ast & \ast & \ast
\end{pmatrix}. $$
If $\rk(i^*F)>0$, then since $\deg(D)>0$, we have $-\rk(i^*F) \deg(D)<0$, hence $\ch_1(K)$ is not effective, which contradicts the fact that $K$ is a torsion sheaf. Therefore we conclude that $\rk(i^*F)>0$, which means that $F|_{C \times D}$ is a torsion sheaf.
\end{proof}

\begin{sublemma}
\label{sublem:chern2}
Let $F$ be a sheaf on $X$ such that $F|_{C \times D}$ is a torsion sheaf for every curve $D \subset S$. Then $F|_{C \times \{s\}}$ has dimension $0$ for all closed points $s \in S$ except finitely many of them.
\end{sublemma}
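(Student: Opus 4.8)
The plan is to translate the statement into a purely geometric assertion about the support of $F$ and then analyse the irreducible components of that support. First I would set $Z := \supp F \subseteq X$ and use the standard fact (a consequence of Nakayama's lemma) that for any closed immersion $i : W \hookrightarrow X$ one has $\supp(i^\ast F) = Z \cap W$. Applying this to the fibre inclusion $C \times \{s\} \hookrightarrow X$, and noting that the only positive-dimensional closed subset of the irreducible curve $C \times \{s\}$ is the whole curve, the exceptional locus in question becomes
\[
  B := \{ s \in S : \dim \supp(F|_{C \times \{s\}}) > 0 \} = \{ s \in S : C \times \{s\} \subseteq Z\}.
\]
Since $Z$ has finitely many irreducible components $Z_1, \dots, Z_k$ and $C \times \{s\}$ is irreducible, we have $B = \bigcup_i B_i$ with $B_i := \{ s : C \times \{s\} \subseteq Z_i\}$, so it suffices to prove that each $B_i$ is finite.

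Fix an irreducible component $W = Z_i$. If $\dim W = 0$ then $B_i = \emptyset$, and if $\dim W = 1$ then $C \times \{s\} \subseteq W$ forces $W = C \times \{s\}$, so $B_i$ has at most one element. The substantial case is $\dim W = 2$. Here I would use that every fibre of $\pi_S$ is the curve $C$, so that $W \cap (C \times \{s\})$ is either finite or all of $C \times \{s\}$; consequently $B_i = \{ s : \dim (W \cap (C \times \{s\})) \geq 1\}$ is closed in $S$, by upper semicontinuity of fibre dimension together with properness of the projection $W \to S$.

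Suppose, for contradiction, that $B_i$ is infinite. Being a closed subset of the surface $S$, it then contains an irreducible curve $D$, whence $C \times \{s\} \subseteq W$ for every $s \in D$ and therefore $C \times D \subseteq W$. As $C \times D$ and $W$ are both irreducible surfaces, $W = C \times D$; but then $\supp(F|_{C \times D}) = Z \cap (C \times D) \supseteq W = C \times D$, so $F|_{C \times D}$ has full two-dimensional support and is not a torsion sheaf, contradicting the hypothesis applied to the curve $D$. Hence each $B_i$ is finite, and so is $B$.

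The main obstacle, and the only place the hypothesis is actually used, is this last step for two-dimensional components: one must rule out components of the form $C \times D$, namely those whose projection to $S$ collapses onto a curve, and this is precisely what the torsion assumption on $F|_{C \times D}$ forbids. Components that dominate $S$ cause no difficulty, since for them the locus where the fibre dimension jumps to one is forced to be finite.
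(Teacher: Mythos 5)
Your proof is correct in substance but takes a genuinely different route from the paper's. The paper avoids any decomposition of $\supp(F)$: it restricts $F$ to the generic fibre $\eta \times S$ of the projection to $C$ (where $\eta$ is the generic point of $C$), sets $T = \pi_S(\supp(F|_{\eta \times S}))$, observes that the exceptional locus is precisely the set of closed points of $T$, and then, if that locus is infinite, extracts a curve $D \subset T$ and takes closures to get $C \times D \subset \supp(F)$, contradicting the torsion hypothesis. You reach the same punchline --- an infinite bad locus produces a curve $D$ with $C \times D$ inside the support --- but get there by decomposing $\supp(F)$ into irreducible components and invoking Chevalley upper semicontinuity of fibre dimension plus properness of $W \to S$ to see that each $B_i$ is closed. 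What the paper's generic-point device buys is uniformity: there is no case analysis on $\dim W$ and no appeal to semicontinuity, since passing to the generic fibre automatically records exactly those $s$ over which the restriction is positive-dimensional. What your version buys is an argument phrased entirely in terms of closed subvarieties of $X$, which is arguably more concrete.

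There is one small gap: your case analysis stops at $\dim W = 2$, which tacitly assumes $F$ is a torsion sheaf, and the sublemma does not assume this. If $\supp(F)$ had a three-dimensional component, then $W = X$ (as $X$ is irreducible), $B_i = S$, and the sentence ``as $C \times D$ and $W$ are both irreducible surfaces, $W = C \times D$'' no longer applies. The repair is already contained in your own final step: the identification $W = C \times D$ is superfluous, because $C \times D \subseteq W \subseteq Z$ alone gives $\supp(F|_{C \times D}) = Z \cap (C \times D) = C \times D$, hence $F|_{C \times D}$ is not torsion, a contradiction. So either add the (trivial) case $\dim W = 3$, or delete the identification and run the same closed-locus-contains-a-curve argument for every component of dimension at least $2$; with that adjustment the proof is complete.
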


\begin{proof}
Let $\eta$ be the generic point of $C$, and write $T = \pi_S(\supp(F|_{\eta \times S}))$, where $\pi_S$ is the projection to the second factor. Then for any closed point $s \in S$, $F|_{C \times \{s\}}$ has dimension $0$ if and only if $s \notin T$. Therefore it suffices to show that $T$ contains only finitely many closed points in $S$.

If not, then $\dim T \geq 1$, hence $T$ contains a curve $D \subset T$. Now we consider the restriction $F|_{C \times D}$. By the definition of $T$, we know that $\eta \times T \subset \supp(F)$. Therefore its closure $C \times T \subset \supp(F)$. It follows that $C \times D \subset \supp(F)$ hence $F|_{C \times D}$ is not a torsion sheaf, which is a contradiction.
\end{proof}

\begin{proof}[Proof of Lemma \ref{lem:chern}]
The statement follows easily from Sublemmas \ref{sublem:chern1} and \ref{sublem:chern2}.
\end{proof}

\begin{lemma}
\label{lem:supp1}
If $\ch (F) = \begin{pmatrix} 0 & 0 & 0 \\ \ast & \ast & \ast \end{pmatrix}$, then $F|_{C \times \{s\}}$ is 0-dimensional for all closed points $s \in S$.
\end{lemma}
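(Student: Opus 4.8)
The plan is to analyse $F$ by restricting it to the \emph{sections} $\{c\} \times S$ rather than to the fibers $C \times \{s\}$, and to use the vanishing of the \emph{entire} top row of $\ch(F)$ to pin down the support of $F$. First I would invoke formula \eqref{eqn:res-to-section} of Lemma \ref{lem:pos0}: for a generic closed point $c \in C$ (concretely, $c$ in the dense open subset $U \subseteq C$ over which $F$ is flat as an $\cO_C$-module, as in the proof of Lemma \ref{lem:pos0}),
\[
\ch(F|_{\{c\} \times S}) = a_{00} f_0 + a_{01} f_1 + a_{02} f_2 = 0,
\]
since by hypothesis $a_{00}=a_{01}=a_{02}=0$.

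The key step is then to upgrade ``trivial Chern character'' to ``zero sheaf''. Writing $G := F|_{\{c\} \times S}$, this is a coherent sheaf on $S$ with $\rank(G)=0$, $c_1(G)=0$ and $\ch_2(G)=0$. Being of rank $0$, $G$ is torsion, and $c_1(G)$ is the class of its (effective) one-dimensional support. Since $S$ has Picard rank $1$ with ample generator $H_S$, any nonzero effective curve class is a positive multiple of $H_S$ and hence nonzero in $\Pic(S)$; thus $c_1(G)=0$ forces $G$ to be supported in dimension $0$. But then $\ch_2(G)$ is the length of $G$ times the point class, so $\ch_2(G)=0$ gives $G=0$. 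Therefore $F|_{\{c\} \times S}=0$ for every $c \in U$, which is to say $\supp F$ is disjoint from $\{c\} \times S$ for all such $c$.

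Finally I would translate this back into a statement about fibers. Since $\pi_C : C \times S \to C$ is proper, $\pi_C(\supp F)$ is closed in $C$; by the previous paragraph it avoids the dense open set $U$, so it is a proper closed subset of the curve $C$, hence a finite set $\{c_1, \dots, c_k\}$. Consequently $\supp F \subseteq \{c_1, \dots, c_k\} \times S$, and for \emph{every} closed point $s \in S$,
\[
\supp\!\big(F|_{C \times \{s\}}\big) = \supp F \cap (C \times \{s\}) \subseteq \{(c_1,s), \dots, (c_k,s)\},
\]
which is finite, so $F|_{C \times \{s\}}$ is $0$-dimensional for all $s$. This is precisely the desired strengthening of Lemma \ref{lem:chern}: the extra hypothesis $a_{02}=0$ is exactly what removes the finitely many exceptional fibers allowed there. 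The only genuine subtlety is the middle step, deducing $G=0$ from $\ch(G)=0$, which is where the Picard rank $1$ assumption on $S$ is essential; the rest is a routine consequence of Lemma \ref{lem:pos0} together with the properness of $\pi_C$.
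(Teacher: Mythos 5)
Your proof is correct and takes essentially the same route as the paper's: restrict $F$ to a generic section $\{c\} \times S$ via \eqref{eqn:res-to-section}, observe the Chern character vanishes so the restriction is zero, conclude $\supp F$ lies over finitely many points of $C$, and hence meets every fiber $C \times \{s\}$ in finitely many points; you merely spell out the intermediate steps (Nakayama-free support argument, properness of $\pi_C$) that the paper leaves implicit. One small quibble: the Picard rank $1$ hypothesis is not actually \emph{essential} for deducing $G = 0$ from $\ch(G) = 0$, since on any smooth projective surface a nonzero effective curve class pairs strictly positively with an ample class and so cannot vanish in cohomology; but this overstatement does not affect the validity of your argument.
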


\begin{proof}
By \eqref{eqn:res-to-section}, for a generic closed point $c \in C$, the restriction $F|_{\{c\} \times S}$ has the Chern character $\ch(F|_{\{c\} \times S}) = 0$. It follows that $F$ is supported on a finite number of horizontal sections of the form $\{c\} \times S$. Therefore for any closed point $s \in S$, the restriction $F|_{C \times \{s\}}$ is supported on at most finitely many points, which are the intersections of the above sections with the fiber.
\end{proof}

\begin{lemma}
\label{lem:supp2}
If $\ch (F) = \begin{pmatrix} 0 & \ast & \ast \\ 0 & \ast & \ast \end{pmatrix}$, then $F|_{C \times \{s\}} = 0$ for a generic closed point $s \in S$.
\end{lemma}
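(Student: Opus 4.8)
The plan is to read off the Chern character of the restriction to a generic fiber directly from Lemma~\ref{lem:pos0}, and then invoke the elementary fact that a coherent sheaf on a smooth curve is zero as soon as its Chern character vanishes. This mirrors the proof of Lemma~\ref{lem:supp1}, with the roles of the two projections interchanged.

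First I would apply formula~\eqref{eqn:res-to-fiber} of Lemma~\ref{lem:pos0}: for a generic closed point $s \in S$ we have
\[
  \ch(F|_{C \times \{s\}}) = a_{00} e_0 + a_{10} e_1 \in H^*_{\mathrm{alg}}(C, \bZ).
\]
The hypothesis $\ch(F) = \begin{pmatrix} 0 & \ast & \ast \\ 0 & \ast & \ast \end{pmatrix}$ says precisely that $a_{00} = a_{10} = 0$, and therefore $\ch(F|_{C \times \{s\}}) = 0$ for a generic $s$.

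Next I would interpret the two coefficients geometrically. On the smooth projective curve $C$, the class $e_0$ records the rank and $e_1$ the degree of a coherent sheaf. Vanishing of the $e_0$-coefficient forces $F|_{C \times \{s\}}$ to be torsion-free of rank $0$, hence a $0$-dimensional (torsion) sheaf; for such a sheaf the degree equals its length, which is non-negative and vanishes if and only if the sheaf itself is zero. Since the $e_1$-coefficient also vanishes, I conclude that $F|_{C \times \{s\}} = 0$ for a generic closed point $s \in S$, as claimed.

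The argument is essentially immediate once~\eqref{eqn:res-to-fiber} is available, so there is no real obstacle; the only point needing care is that~\eqref{eqn:res-to-fiber} holds only on the open locus of $S$ over which $F$ is flat (via generic flatness, as in Lemma~\ref{lem:pos0}), which is exactly the ``generic $s$'' appearing in the statement. On that locus the underived restriction agrees with the derived one, so the Chern character formula genuinely computes $\ch(F|_{C \times \{s\}})$ and the conclusion follows.
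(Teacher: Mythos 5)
Your proof is correct and follows exactly the paper's argument: apply formula \eqref{eqn:res-to-fiber} to see that $\ch(F|_{C \times \{s\}})=0$ for generic $s$, then conclude the restriction itself vanishes (the paper leaves this last step implicit, while you spell out the rank/degree reasoning on the curve $C$). One small wording slip: a sheaf on a curve with vanishing rank is \emph{torsion} (not ``torsion-free of rank $0$''), though your subsequent chain --- $0$-dimensional, degree equals length, hence zero --- is exactly right.
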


\begin{proof}
By \eqref{eqn:res-to-fiber}, for a generic closed point $s \in S$, the restriction $F|_{C \times \{s\}}$ has the Chern character $\ch(F|_{C \times \{s\}})=0$. It follows that $F|_{C \times \{s\}}=0$ for a generic closed point $s \in S$.
\end{proof}

\begin{lemma}
\label{lem:pos2}
If $\ch (F) = \begin{pmatrix} 0 & 0 & a_{02} \\ 0 & a_{11} & \ast \end{pmatrix}$, then $a_{02}, a_{11} \geq 0$.
\end{lemma}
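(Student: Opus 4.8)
The plan is to re-run the Hilbert-polynomial argument of Lemma \ref{lem:pos1}, this time exploiting that the hypotheses pin $F$ down to dimension at most $1$. Indeed $\ch_0(F)=a_{00}=0$ and $\ch_1(F)=a_{01}(e_0\otimes f_1)+a_{10}(e_1\otimes f_0)=0$, so $F$ is torsion with vanishing first Chern class; hence its support carries no $2$-dimensional component and $F$ is supported in dimension at most $1$. Consequently $\ch(F)=\ch_2(F)+\ch_3(F)$ with
\[
\ch_2(F)=a_{02}(e_0\otimes f_2)+a_{11}(e_1\otimes f_1).
\]

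The inequality $a_{02}\geq 0$ is immediate: the given Chern character is exactly the case $a_{10}=0$ of Lemma \ref{lem:pos3} (and it will in any case reappear below). For $a_{11}$, I would fix an arbitrary ample class $L=\alpha(e_0\otimes f_1)+\beta(e_1\otimes f_0)$ with $\alpha,\beta>0$ (Lemma \ref{lem:ample-cone}) and study $P(m):=\chi(F\otimes L^m)$. Since $F$ is supported in dimension at most $1$, $P$ has degree at most $1$; by Riemann--Roch, and using $\td_1(X)=0$, the only contribution to the linear coefficient is the cross term $\ch_2(F)\cdot(mL)$, so that $P(m)=m\,(\ch_2(F)\cdot L)+\mathrm{const}$. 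Serre vanishing gives $P(m)=h^0(F\otimes L^m)\geq 0$ for $m\gg 0$, whence $\ch_2(F)\cdot L\geq 0$ for every ample $L$.

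It then remains to read off $a_{11}$ from this pairing. Using the intersection numbers of Section \ref{sec-intersecpro} one finds $\ch_2(F)\cdot(e_0\otimes f_1)=2d\,a_{11}$ and $\ch_2(F)\cdot(e_1\otimes f_0)=a_{02}$, so
\[
\ch_2(F)\cdot L=2d\,\alpha\,a_{11}+\beta\,a_{02}\geq 0\qquad\text{for all }\alpha,\beta>0.
\]
Fixing $\beta$ and letting $\alpha\to\infty$ forces $a_{11}\geq 0$ (and, symmetrically, fixing $\alpha$ and letting $\beta\to\infty$ recovers $a_{02}\geq 0$). The only step demanding care is the passage from $\ch_1(F)=0$ to the genuine absence of any $2$-dimensional support, which is what guarantees $\deg P\leq 1$ and that its leading coefficient is exactly $\ch_2(F)\cdot L$; the remaining Riemann--Roch bookkeeping is routine and identical in spirit to Lemma \ref{lem:pos1}, so I anticipate no further obstacle.
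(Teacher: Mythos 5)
Your proof is correct and takes essentially the same approach as the paper's: a Hilbert-polynomial/Serre-vanishing argument combined with the description of the ample cone (Lemma \ref{lem:ample-cone}) and the intersection numbers of Section \ref{sec-intersecpro}, concluding by letting $\alpha$ or $\beta$ dominate. The only cosmetic differences are that you first pin down the support dimension and quote Lemma \ref{lem:pos3} for $a_{02}\geq 0$ (the paper extracts both inequalities from the single pairing computation); incidentally, your coefficient $2d\,\alpha\,a_{11}+\beta\,a_{02}$ is the precise one, the paper having silently absorbed the harmless positive factor $2d$.
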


\begin{proof}
By Lemma \ref{lem:ample-cone}, we can write any ample line bundle $L$ on $X$ as
$$ L = \alpha e_0 \otimes f_1 + \beta e_1 \otimes f_0 $$
for some $\alpha, \beta>0$. By Serre vanishing, we have that $\chi(F \otimes L^{\otimes m}) \geq 0$ for $m \gg 0$, therefore the leading coefficient in the expansion of $\chi(F \otimes L^{\otimes m})$ must be non-negative. We find that
\begin{align*}
\chi(F \otimes L^{\otimes m}) &= \int \ch(F) \cdot \ch(L^{\otimes m}) \cdot \td(X) \\
&= \int (0, 0, \ch_{02}+\ch_{11}, \ch_{12}) \cdot \ch(L^{\otimes m}) \cdot \td(X) \\
&= m(\alpha e_0 \otimes f_1 + \beta e_1 \otimes f_0) \cdot (\ch_{02}+\ch_{11}) + \text{ constant term}.
\end{align*}
Therefore we require
$$ (\alpha e_0 \otimes f_1 + \beta e_1 \otimes f_0) \cdot (\ch_{02}+\ch_{11}) \geq 0 $$
or equivalently,
$$ \alpha \cdot a_{11} + \beta \cdot a_{02} \geq 0 $$
for any $\alpha, \beta>0$. By taking $\alpha \gg 0$ and $0 < \beta \ll 1$, the term $\alpha \cdot a_{11}$ dominates, hence we get $a_{11} \geq 0$. Similarly, by taking $0 < \alpha \ll 1$ and $\beta \gg 0$, we get $a_{02} \geq 0$.
\end{proof}

\begin{lemma}
\label{lem:supp3}
If $\ch (F) = \begin{pmatrix} 0 & 0 & \ast \\ 0 & \ast & \ast \end{pmatrix}$, then $F$ is supported in dimension at most $1$.
\end{lemma}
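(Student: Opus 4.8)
The plan is to reduce the statement to the claim that $F$ has no pure $2$-dimensional part, and then to rule this out using the vanishing of $\ch_0(F)$ and $\ch_1(F)$. Reading off the matrix notation \eqref{eqn:matrix}, the hypothesis $a_{00}=a_{01}=a_{10}=0$ says exactly that $\ch_0(F)=0$ (so that $F$ is a torsion sheaf, automatically supported in dimension at most $2$) and that $\ch_1(F)=a_{01}(e_0\otimes f_1)+a_{10}(e_1\otimes f_0)=0$. So first I would take the torsion filtration $0\to G\to F\to Q\to 0$ in $\Coh(X)$, where $G$ is the maximal subsheaf of $F$ lying in $\Coh^{\leq 1}(X)$ and $Q=F/G$ is either zero or pure of dimension $2$. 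Since $G$ is supported in dimension at most $1$ we have $\ch_0(G)=\ch_1(G)=0$, hence $\ch_1(Q)=\ch_1(F)-\ch_1(G)=0$. It then suffices to prove that $Q=0$.

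The key step is the observation that a nonzero pure $2$-dimensional sheaf $Q$ on the smooth threefold $X$ has $\ch_1(Q)$ equal to the fundamental class of its (nonempty) support divisor, counted with the generic multiplicities of $Q$ along its components. This class is a nonzero effective class in $\ns(X)=\bZ(e_0\otimes f_1)\oplus\bZ(e_1\otimes f_0)$; its non-negativity is consistent with the positivity recorded in Lemma \ref{lem:pos1}, and it is nonzero precisely because the support is a nonempty surface. Combined with $\ch_1(Q)=0$ from the previous paragraph, this forces $Q=0$, so $F$ lies in $\Coh^{\leq 1}(X)$, as desired.

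I expect the only genuine content here to be the identification of $\ch_1$ of a pure $2$-dimensional sheaf with its nonzero effective support cycle; everything else is bookkeeping with the matrix form. If instead one prefers to stay entirely within the restriction framework developed in this section, one can argue as follows: Lemma \ref{lem:supp2} applies (since $a_{00}=a_{10}=0$) and gives $F|_{C\times\{s\}}=0$ for a generic $s\in S$, so $\pi_S(\supp F)$ is a proper closed subset of $S$ and $\supp F\subseteq C\times T$ with $\dim T\leq 1$; meanwhile the restriction formula \eqref{eqn:res-to-section} of Lemma \ref{lem:pos0} shows (since $a_{00}=a_{01}=0$) that $F|_{\{c\}\times S}$ is $0$-dimensional for a generic $c\in C$. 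A hypothetical $2$-dimensional component of $\supp F$ would have to be a component of $C\times T_0$ for some curve $T_0\subseteq T$, which meets a generic slice $\{c\}\times S$ in a curve and thereby contradicts the $0$-dimensionality of $F|_{\{c\}\times S}$. In this second version the one point requiring a little care is the appeal to generic flatness in order to identify $\supp(F|_{\{c\}\times S})$ with $(\supp F)\cap(\{c\}\times S)$ for generic $c$; the first version avoids this entirely.
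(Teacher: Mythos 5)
Your first argument is correct and is essentially the paper's own proof: the paper disposes of this lemma in one line, observing that $\ch_0(F)=\ch_1(F)=0$ forces $\supp F$ to have codimension at least $2$ in $X$, which is precisely the cycle-theoretic fact you isolate (for a coherent sheaf whose support has codimension $c$, the Chern characters below degree $c$ vanish and $\ch_c$ is the effective, nonzero class of the codimension-$c$ support cycle). Your torsion-filtration reduction to a pure $2$-dimensional quotient $Q$ with $\ch_1(Q)=0$ is just a more explicit packaging of that same point.

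Your second, restriction-based variant is a genuinely different route and it does go through, but two remarks are in order. First, it does not really avoid the key cycle-theoretic input: to pass from $\ch(F|_{\{c\}\times S})=a_{02}f_2$ to the statement that $F|_{\{c\}\times S}$ is $0$-dimensional, you need the surface analogue of the very same fact ($\ch_0=\ch_1=0$ on $S$ forces $0$-dimensional support); this is exactly the step the paper takes in the proof of Lemma \ref{lem:pos3}, so it is fair game, but it means the second proof is not more elementary than the first. Second, the point you flag as delicate is not where the care is needed: the identification $\supp\bigl(F|_{\{c\}\times S}\bigr)=(\supp F)\cap(\{c\}\times S)$ holds for \emph{every} closed point $c$ by Nakayama's lemma, since support commutes with pullback along closed immersions; generic flatness is needed only to establish the Chern character formula \eqref{eqn:res-to-section} itself, as in the proof of Lemma \ref{lem:pos0}. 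With that adjustment both of your arguments are complete, and the first coincides with the paper's.
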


\begin{proof}
As $\ch_i(F)=0$ for $i=0$ and $1$, the support of $F$ has at least codimension $2$ in $X$. The claim follows.
\end{proof}

\begin{lemma}
\label{lem:supp4}
If $\ch (F) = \begin{pmatrix} 0 & 0 & a_{02} \\ 0 & 0 & a_{12} \end{pmatrix}$, then $F$ is supported on a finite number of fibers.
\end{lemma}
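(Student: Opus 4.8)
The plan is to reduce the statement to showing that the image $T := \pi_S(\supp F) \subseteq S$ is a finite set of points; once this is established, $\supp F \subseteq \pi_S^{-1}(T)$ is a finite union of fibers and we are done. First I would invoke Lemma \ref{lem:supp3}: since the first two antidiagonals of the matrix vanish, $\ch_0(F) = 0$ and $\ch_1(F) = 0$, so $F$ is supported in dimension at most $1$. Consequently $\supp F$ is closed with only finitely many irreducible components, each of dimension $\leq 1$, and $T = \pi_S(\supp F)$ is closed with finitely many irreducible components. Thus $T$ is finite if and only if $\dim T = 0$.

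I would then argue by contradiction: suppose $T$ is infinite, so that $\dim T = 1$ and $T$ contains a curve $D \subseteq S$. Then some $1$-dimensional irreducible component $Z_0$ of $\supp F$ dominates $D$ under $\pi_S$, i.e.\ $Z_0$ is a \emph{horizontal} curve. The key point is that such a component contributes a nonzero multiple of the horizontal class $e_1 \otimes f_1$ to $\ch_2(F)$, which is incompatible with the hypothesis $a_{11} = 0$. To make this precise, I would use the standard fact that for a sheaf supported in dimension $\leq 1$ the degree-$2$ part of the Chern character is the associated $1$-cycle: writing $\ch_2(F) = \sum_i \ell_i [Z_i]$ over the $1$-dimensional components $Z_i$ with positive multiplicities $\ell_i$, I would intersect both sides with the nef divisor class $e_0 \otimes f_1 = \pi_S^\ast H_S$.

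On the one hand, the hypothesis $a_{11} = 0$ gives $\ch_2(F) = a_{02}(e_0 \otimes f_2)$, and by the intersection table in Section \ref{sec-intersecpro} we have $(e_0 \otimes f_2) \cdot (e_0 \otimes f_1) = 0$, so $\ch_2(F) \cdot (e_0 \otimes f_1) = 0$. On the other hand, by the projection formula $[Z_i] \cdot \pi_S^\ast H_S = (\pi_{S\ast}[Z_i]) \cdot H_S$; this vanishes for every vertical component $C \times \{s\}$, whose image is a point, but is strictly positive for the horizontal component $Z_0$, since $\pi_{S\ast}[Z_0]$ is a positive multiple of the curve class $[D]$ and $H_S$ is ample on $S$. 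As all multiplicities $\ell_i$ are positive and every term is $\geq 0$, the presence of $Z_0$ forces $\ch_2(F) \cdot (e_0 \otimes f_1) > 0$, a contradiction. Hence $\dim T = 0$ and the lemma follows.

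I expect the main obstacle to be the bookkeeping that justifies $\ch_2(F) = \sum_i \ell_i [Z_i]$ with positive multiplicities and, especially, the positivity $[Z_0] \cdot \pi_S^\ast H_S > 0$; the cleanest way to secure the latter is via the projection formula together with ampleness of $H_S$, rather than a direct geometric intersection count on $X$. It is worth emphasizing that the hypothesis $a_{11} = 0$ is genuinely essential and is exactly what the earlier fiber-restriction results (Lemmas \ref{lem:chern} and \ref{lem:supp1}) cannot detect: a sheaf supported on a curve of the form $\{c_0\} \times D$ restricts to a $0$-dimensional sheaf on every fiber yet is spread over infinitely many fibers, and it is excluded here precisely because it would contribute to $a_{11}$.
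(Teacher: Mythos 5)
Your proof is correct, but it takes a genuinely different route from the paper's. Both arguments begin the same way (invoke Lemma \ref{lem:supp3} to get $\dimension \supp F \leq 1$, then assume for contradiction that some irreducible component of $\supp F$ dominates a curve in $S$), but the mechanisms for deriving the contradiction differ. The paper pushes the \emph{sheaf} forward: it applies Grothendieck-Riemann-Roch to $\pi_S$, using $\td(C)=1$ to get $\ch((\pi_S)_\ast F) - \ch(R^1(\pi_S)_\ast F) = (\pi_S)_\ast \ch(F) = a_{12}f_2$, and then observes that a horizontal component would force $\ch_1((\pi_S)_\ast F) = b_1 f_1$ with $b_1 \neq 0$, while $R^1(\pi_S)_\ast F$ is supported at only finitely many points (those whose entire fiber lies in $\supp F$) and so cannot cancel this class. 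You instead push the \emph{cycle} forward: you use the standard fact that for a sheaf of dimension $\leq 1$ the class $\ch_2(F)$ is the effective support $1$-cycle $\sum_i \ell_i [Z_i]$, and pair it with the nef class $e_0 \otimes f_1 = \pi_S^\ast H_S$; vertical components pair to zero, a horizontal component pairs strictly positively by the projection formula and ampleness of $H_S$, while the hypothesis $a_{11}=0$ forces the total pairing to vanish. Your route is more direct — it avoids any analysis of $R^1(\pi_S)_\ast F$ (which in the paper implicitly rests on cohomology and base change) and it isolates exactly where the hypothesis $a_{11}=0$ enters — at the cost of invoking the cycle-theoretic interpretation of $\ch_2$ with positive multiplicities, a standard fact (via dévissage and GRR for integral curves) that the paper never needs to state. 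Your closing remark correctly identifies why this lemma cannot follow from the fiber-restriction results alone: a sheaf on $\{c_0\} \times D$ has $\ch_2$ proportional to $e_1 \otimes f_1$ and is excluded precisely by $a_{11}=0$, not by any condition on fiberwise restrictions.
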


\begin{proof}
By Lemma \ref{lem:supp3} we know that $F$ is supported in dimension at most $1$. It remains to show that every $1$-dimensional irreducible component in the support of $F$ must be a fiber.

Assume on the contrary that there is some $1$-dimensional irreducible component in the support of $F$ whose projection to $S$ is still $1$-dimensional. Then the support of $(\pi_S)_\ast F$ is $1$-dimensional. Therefore we can write $\ch((\pi_S)_\ast F) = b_1f_1 + b_2f_2$ where $b_1 \neq 0$.

Moreover, $R^1(\pi_S)_\ast F$ is only supported at points $s \in S$ such that the entire fiber $C \times \{s\}$ is in the support of $F$. Therefore the support of $R^1(\pi_S)_\ast F$ contains only a finite number of points. It follows that $\ch(R^1(\pi_S)_\ast F) = b'_2f_2$.

On the other hand, we realize that $\td(C)=1$ hence $\pi_C^*\td(C)=1$. By Grothendieck-Riemann-Roch, we have that
\begin{align*}
\ch((\pi_S)_\ast F) - \ch(R^1(\pi_S)_\ast F) &= (\pi_S)_\ast(\ch(F) \cdot \pi_C^*\td(C)) \\
&= (\pi_S)_\ast \begin{pmatrix} 0 & 0 & a_{02} \\ 0 & 0 & a_{12} \end{pmatrix} = a_{12}f_2.
\end{align*}

Comparing the above computations we get
$$ b_1f_1 + (b_2-b'_2)f_2 = a_{12}f_2, $$
hence $b_1=0$, which is a contradiction.
\end{proof}

\begin{lemma}
\label{lem:pos4}
If $\ch (F)= \begin{pmatrix} 0 & 0 & 0 \\ 0 & 0 & a_{12} \end{pmatrix}$, then $a_{12} \geq 0$.
\end{lemma}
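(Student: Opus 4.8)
The plan is to mimic the Serre vanishing arguments of Lemmas \ref{lem:pos1}--\ref{lem:pos3}, simplified by the fact that the only possibly nonzero component of $\ch(F)$ sits in top degree. Fix any ample line bundle $L = \alpha(e_0 \otimes f_1) + \beta(e_1 \otimes f_0)$ with $\alpha, \beta > 0$ (Lemma \ref{lem:ample-cone}). By Serre vanishing, for $m \gg 0$ all higher cohomology of $F \otimes L^{\otimes m}$ vanishes, so $\chi(F \otimes L^{\otimes m}) = h^0(F \otimes L^{\otimes m}) \geq 0$.

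The key observation is that this Euler characteristic is simply $a_{12}$, independent of $m$. Indeed, by Hirzebruch--Riemann--Roch, $\chi(F \otimes L^{\otimes m}) = \int_X \ch(F)\cdot\ch(L^{\otimes m})\cdot\td(X)$; since $\ch(F) = a_{12}(e_1\otimes f_2)$ already lies in $H^6(X)$, the only contribution to the degree-$6$ part of the integrand comes from pairing it with the degree-$0$ parts of $\ch(L^{\otimes m})$ and $\td(X)$, both equal to $1$. Using $(e_1\otimes f_0)\cup(e_0\otimes f_2) = 1$ from Section \ref{sec-intersecpro}, which gives $\int_X(e_1\otimes f_2) = 1$, the integral collapses to $\chi(F\otimes L^{\otimes m}) = a_{12}$. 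Combining with the previous paragraph yields $a_{12}\geq 0$.

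Conceptually this is just the statement that the length of a sheaf is non-negative: since $\ch_0(F) = \ch_1(F) = \ch_2(F) = 0$ forces the support of $F$ to have codimension at least $3$ in $X$ (exactly as in the proof of Lemma \ref{lem:supp3}), the sheaf $F$ is $0$-dimensional, and then $a_{12} = \chi(F) = \mathrm{length}(F)$. There is essentially no obstacle here; the only thing to check carefully is the bookkeeping that $\ch(F)$ being concentrated in degree $6$ makes both the Euler-characteristic computation and its independence of the twist immediate.
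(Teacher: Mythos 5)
Your proposal is correct, and it actually contains two complete proofs. Your main argument---Serre vanishing gives $\chi(F\otimes L^{\otimes m})\geq 0$ for $m\gg 0$, while Hirzebruch--Riemann--Roch collapses to $\chi(F\otimes L^{\otimes m})=a_{12}$ independently of $m$ because $\ch(F)$ is concentrated in top degree and $\int_X(e_1\otimes f_2)=1$---is a genuinely different route from the paper's, and is the same mechanism the paper itself uses for Lemmas \ref{lem:pos1} and \ref{lem:pos2}; its mild advantage is that it needs no information about the support of $F$, only the shape of $\ch(F)$. The paper instead proves Lemma \ref{lem:pos4} exactly as in your closing ``conceptual'' paragraph: since $\ch_i(F)=0$ for $i=0,1,2$, the support of $F$ is $0$-dimensional, hence $a_{12}=\chi(F)$ is the length of $F$ and so nonnegative. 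So your coda is the paper's proof verbatim, while your primary argument is an alternative that keeps the treatment uniform with the earlier positivity lemmas; the paper's version is shorter and gives the slightly stronger interpretation of $a_{12}$ as a length.
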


\begin{proof}
By assumption, we have $\ch_i(F)=0$ for $i=0,1$ and $2$. Therefore the support of $F$ is $0$-dimensional hence a finite number of points. It follows that $a_{12} = \chi(F) \geq 0$ as it is the length of $F$.
\end{proof}

To summarise the above discussion and give a complete list of positivity results that we can achieve for the Chern character of any coherent sheaf $F$ on $X=C \times S$, we state the following proposition. For simplicity, for any closed point $s \in S$, we write $F|_s$ to denote $F|_{C \times \{s\}}$.

\begin{proposition}
\label{prop:summary}
Let $X = C \times S$ where $C$ is an elliptic curve and $S$ is a K3 surface of Picard number $1$. For any $F \in \coh(X)$, we write the Chern character $\ch(F)$ in the form of \eqref{eqn:matrix}. Then we have
\begin{enumerate}
\item[(1)] If $\ch (F)=\begin{pmatrix} 0 & \ast & \ast \\ \ast & \ast & \ast \end{pmatrix}$, then $a_{01}, a_{10} \geq 0$.
\item[(2)] If $\ch_0 (F)= \begin{pmatrix} 0 & 0 & \ast \\ \ast & \ast & \ast \end{pmatrix}$, then $a_{02}, a_{10} \geq 0$, and $F|_s$ is 0-dimensional for all but a finite number of closed points $s \in S$.
\item[(3)] If $\ch (F) = \begin{pmatrix} 0 & 0 & 0 \\ \ast & \ast & \ast \end{pmatrix}$, then $a_{10} \geq 0$, and $F|_s$ is 0-dimensional for all closed points $s \in S$.
\item[(4)] If $\ch (F) = \begin{pmatrix} 0 & \ast & \ast \\ 0 & \ast & \ast \end{pmatrix}$, then $a_{01} \geq 0$, and $F|_s = 0$ for a general closed point $s \in S$.
\item[(5)] If $\ch (F) = \begin{pmatrix} 0 & 0 & \ast \\ 0 & \ast & \ast \end{pmatrix}$, then $a_{02}, a_{11} \geq 0$, and $F$ is supported in dimension at most 1.
\item[(6)] If $\ch (F) = \begin{pmatrix} 0 & 0 & \ast \\ 0 & 0 & \ast \end{pmatrix}$, then $a_{02} \geq 0$, and $F$ is supported on a finite number of fibers.
\item[(7)] If $\ch (F) = \begin{pmatrix} 0 & 0 & 0 \\ 0 & \ast & \ast \end{pmatrix}$, then $F$ is supported in dimension at most 1, and $F|_s$ is 0-dimensional for all closed points $s \in S$.
\item[(8)] If $\ch (F) = \begin{pmatrix} 0 & 0 & 0 \\ 0 & 0 & \ast \end{pmatrix}$, then $F$ is supported at a finite number of points, and $\ch_3(F) \geq 0$.
\end{enumerate}
\end{proposition}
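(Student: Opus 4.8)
The plan is to observe that Proposition \ref{prop:summary} is simply a compilation of the lemmas already established in this subsection, so the proof reduces to matching each of the eight cases to the appropriate earlier result and noting that the more constrained Chern-character patterns arise as special cases of the less constrained ones (by setting further entries to zero). No new computation is required; the only care needed is bookkeeping, so I would present the argument as a list of citations rather than a fresh derivation.

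First I would dispatch the positivity statements. Case (1) is exactly Lemma \ref{lem:pos1}. Imposing $a_{01}=0$ in that pattern is the hypothesis of Lemma \ref{lem:pos3}, which supplies $a_{02}, a_{10} \geq 0$ for case (2); reading Lemma \ref{lem:pos1} with $a_{01}=0$ gives $a_{10}\geq 0$ for case (3), and reading it with $a_{10}=0$ gives $a_{01}\geq 0$ for case (4). For the patterns with $a_{10}=0$, Lemma \ref{lem:pos2} yields $a_{02}, a_{11}\geq 0$ in case (5), and the further specialization $a_{11}=0$ gives $a_{02}\geq 0$ in case (6). Finally, case (8) reads $\ch_3(F)=a_{12}\geq 0$ directly off Lemma \ref{lem:pos4}.

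Next I would collect the support statements. The claim in case (2) that $F|_s$ is $0$-dimensional away from finitely many $s$ is precisely Lemma \ref{lem:chern}; strengthening the hypothesis to $a_{02}=0$ (case (3)) upgrades this to all $s\in S$ via Lemma \ref{lem:supp1}, and case (7) is the further special case of Lemma \ref{lem:supp1} with $a_{10}=0$, combined with Lemma \ref{lem:supp3} for the dimension bound. The vanishing $F|_s=0$ at a generic point in case (4) is Lemma \ref{lem:supp2}; the dimension-at-most-one conclusions in cases (5) and (7) are Lemma \ref{lem:supp3}; the finite-fiber support in case (6) is Lemma \ref{lem:supp4}; and the $0$-dimensional support in case (8) is the opening observation in the proof of Lemma \ref{lem:pos4}.

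Since every assertion is quoted essentially verbatim from an earlier lemma, there is no genuine obstacle here. The only point to verify is that the matrix hypothesis in each case is indeed an instance of the hypothesis of the cited lemma, which is immediate by inspection; in particular one should confirm that each extra vanishing entry only restricts, and never enlarges, the class of sheaves under consideration, so that the cited conclusions continue to apply.
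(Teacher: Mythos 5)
Your proposal is correct and follows essentially the same route as the paper: the paper's proof is likewise a one-paragraph compilation citing Lemma \ref{lem:pos1} for (1), Lemmas \ref{lem:pos3} and \ref{lem:chern} for (2), Lemmas \ref{lem:pos1} and \ref{lem:supp1} for (3), Lemmas \ref{lem:pos1} and \ref{lem:supp2} for (4), Lemmas \ref{lem:pos2} and \ref{lem:supp3} for (5), Lemmas \ref{lem:pos2} and \ref{lem:supp4} for (6), Lemmas \ref{lem:supp1} and \ref{lem:supp3} for (7), and Lemma \ref{lem:pos4} for (8), with the same implicit observation that extra vanishing entries only specialize the hypotheses of the cited lemmas.
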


\begin{proof}
This is a collection of statements proved in the above series of lemmas. (1) follows from Lemma \ref{lem:pos1}. (2) follows from Lemmas \ref{lem:pos3} and \ref{lem:chern}. (3) follows from Lemmas \ref{lem:pos1} and \ref{lem:supp1}. (4) follows from Lemmas \ref{lem:pos1} and \ref{lem:supp2}. (5) follows from Lemmas \ref{lem:pos2} and \ref{lem:supp3}. (6) follows from Lemmas \ref{lem:pos2} and \ref{lem:supp4}. (7) follows from Lemmas \ref{lem:supp1} and \ref{lem:supp3}. (8) follows from Lemma \ref{lem:pos4}.
\end{proof}

\begin{remark}\label{0602-remark3}
Note that, by Proposition \ref{prop:summary}(3), any coherent sheaf $F$ on $X$ with $\ch(F) = \begin{pmatrix} 0 & 0 & 0 \\ \ast & \ast & \ast \end{pmatrix}$ must lie in $\{\coh^{\leq 0}\}^\uparrow$. However, the converse is not true; the following is a counterexample. Indeed, from the proof of Lemma \ref{lem:supp1}, we know that a coherent sheaf with its Chern class given by the above formula must be supported on the union of finitely many horizontal sections.
\end{remark}

\begin{example}
Suppose $S$ is a K3 surface containing an integral nodal curve $C' \in |H|$ of geometric genus $1$. Let $C$ be the normalization of $C'$. Then $X := C \times S$ contains the graph $\Gamma$ of the composition $C \twoheadrightarrow C' \hookrightarrow S$ as a closed subvariety. Its structure sheaf $\cO_{\Gamma}$ has the Chern character of the form
\[
 \ch (\cO_\Gamma) = \begin{pmatrix} 0 & 0 & 1 \\ 0 & \ast & \ast \end{pmatrix}
\]
since by \eqref{eqn:res-to-section} the first row of the matrix represents the Chern character of $F|_{\{c\} \times S}$ for a generic closed point $c \in C$, which is a skyscraper sheaf.
\end{example}

The following observation will be useful later on:
\begin{lemma}\label{0602-lemma4}
The categories
\[
  \coh^{\mathrm{sec}}(X) := \{ F \in \coh (X) : \ch(F) = \begin{pmatrix} 0 & 0 & 0 \\ \ast & \ast & \ast \end{pmatrix} \}
\]
and
\[
  \coh^{\lrcorner}(X) := \{ F \in \coh (X) : \ch(F) = \begin{pmatrix} 0 & 0 & \ast \\ \ast & \ast & \ast \end{pmatrix} \}
\]
are Serre subcategories of $\coh(X)$. In particular, they are both torsion classes in $\coh(X)$.
\end{lemma}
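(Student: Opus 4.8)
The plan is to realise both categories as the kernel, respectively the preimage of a Serre subcategory, under a single exact functor: restriction to the generic fibre of the first projection $\pi_C : X \to C$. Write $K = K(C)$ for the function field of $C$ and let $S_K$ denote the generic fibre of $\pi_C$, so that restriction defines a functor $(-)_K : \coh(X) \to \coh(S_K)$. Since the generic fibre is obtained by flat base change along $\spec K \to C$, pullback is exact, and hence $(-)_K$ is an exact functor of abelian categories. The whole argument then hinges on translating the two numerical (Chern-character) conditions into geometric conditions on $(-)_K$.

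For this key step I would use Lemma \ref{lem:pos0}: for a generic closed point $c \in C$ the restriction $F|_{\{c\}\times S}$ has Chern character equal to the first row $a_{00}f_0 + a_{01}f_1 + a_{02}f_2$ of the matrix $\ch(F)$. Consequently $F_K = 0$ exactly when $F|_{\{c\}\times S}=0$ for generic $c$, and $F_K$ is supported in dimension zero exactly when $F|_{\{c\}\times S}$ is zero-dimensional for generic $c$. On the K3 surface $S$, a coherent sheaf with vanishing Chern character is zero (vanishing rank forces torsion, then $c_1=0$ forces zero-dimensional support, then $\ch_2=0$ forces length zero), and a torsion sheaf with $c_1=0$ is zero-dimensional. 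This yields
$$ F \in \coh^{\mathrm{sec}}(X) \iff a_{00}=a_{01}=a_{02}=0 \iff F_K = 0, $$
$$ F \in \coh^{\lrcorner}(X) \iff a_{00}=a_{01}=0 \iff F_K \in \coh^{\leq 0}(S_K). $$

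With these identifications the conclusion is formal. The kernel of an exact functor between abelian categories is a Serre subcategory, so $\coh^{\mathrm{sec}}(X) = \ker\big((-)_K\big)$ is Serre. Likewise $\coh^{\leq 0}(S_K)$ is a Serre subcategory of $\coh(S_K)$, and the preimage of a Serre subcategory under an exact functor is again Serre; hence $\coh^{\lrcorner}(X) = \big((-)_K\big)^{-1}\big(\coh^{\leq 0}(S_K)\big)$ is Serre. Finally, any Serre subcategory is closed under quotients and extensions, so since $\coh(X)$ is noetherian, Lemma \ref{lem:torsion-pair} shows that both subcategories are torsion classes.

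The hard part will be the middle step: verifying that the purely numerical vanishing conditions coincide with the geometric conditions on the generic fibre. This rests on Lemma \ref{lem:pos0} together with the elementary facts that on a surface the Chern character detects vanishing of a sheaf and that the first Chern class of a torsion sheaf is effective (so that $c_1=0$ forces zero-dimensional support). A small amount of care with generic flatness is also needed to pass between the generic-closed-point restrictions of Lemma \ref{lem:pos0} and the generic fibre $S_K$. Everything after this identification is a formal property of kernels and preimages under exact functors, so I expect no further difficulty there.
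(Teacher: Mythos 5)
Your proof is correct, but it takes a genuinely different route from the paper's. The paper argues purely numerically: given a short exact sequence $0 \to F'' \to F \to F' \to 0$ with $F$ in either category, the sub and quotient are torsion, so Proposition \ref{prop:summary}(1) gives $c_1(F'')\cdot H D,\, c_1(F')\cdot H D \geq 0$ (i.e.\ $a_{01} \geq 0$ for each); since these add up to $c_1(F)\cdot HD = 0$, both vanish, and then Proposition \ref{prop:summary}(2) applies to give $\ch_2(F'')\cdot H,\, \ch_2(F')\cdot H \geq 0$, which again sum to zero — so the defining vanishing conditions pass to subobjects and quotients (extension-closure being immediate from additivity). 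You instead identify the two categories intrinsically: $\coh^{\mathrm{sec}}(X)$ as the kernel, and $\coh^{\lrcorner}(X)$ as the preimage of $\coh^{\leq 0}(S_K)$, under the exact restriction functor to the generic fibre of $\pi_C$, after which the Serre property is formal. Your translation of the numerical conditions into generic-fibre conditions is sound: it rests on Lemma \ref{lem:pos0}, on the effectivity of $c_1$ for torsion sheaves on a surface, and on standard generic-flatness/support arguments, and it is consistent with what the paper itself records in Remark \ref{0602-remark3} (sheaves in $\coh^{\mathrm{sec}}(X)$ are exactly those supported on finitely many horizontal sections) and in Lemma \ref{lem:chern}. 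What the two approaches buy: the paper's argument is shorter given that Proposition \ref{prop:summary} is already in hand, and never leaves Chern-character arithmetic; yours is more conceptual — it explains \emph{why} these numerical loci are Serre (they are pullbacks of geometric Serre conditions along an exact functor), gives the geometric description of both categories as a byproduct, and localizes the positivity input to the surface fibres rather than invoking the global Hilbert-polynomial positivity of Lemmas \ref{lem:pos1} and \ref{lem:pos2}. The only places requiring care in your write-up — the equivalence between conditions at general closed points of $C$ and at the generic point, and the fact that a torsion sheaf on a surface with $c_1 = 0$ is $0$-dimensional — are exactly the ones you flag, and both are standard.
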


\begin{proof}
Take any coherent sheaf $F$ with $\ch(F) = \begin{pmatrix} 0 & 0 & 0 \\ \ast & \ast & \ast \end{pmatrix}$, and consider  any short exact sequence $0 \to F'' \to F \to F' \to 0$ in $\coh (X)$.  We need to show that $F'', F'$ both have Chern characters of the same form.

Since $F$ is a torsion sheaf, both $F'', F'$ are also torsion.  Then, by Proposition \ref{prop:summary} (1), we have $c_1(F'')\cdot HD \geq 0$ and $c_1(F') \cdot HD \geq 0$.  However, that $0 = c_1(F)\cdot HD = c_1(F'')\cdot HD + c_1(F')\cdot HD$ implies that $c_1(F'')\cdot HD =0=c_1(F')\cdot HD$.    Similarly, by Proposition \ref{prop:summary} (2), we have that $\ch_2(F'')\cdot H = 0 = \ch_2(F')\cdot H$, thus proving that the first category is a Serre subcategory of $\coh (X)$.  The same argument shows that the second category is also a Serre subcategory.  By Lemma \ref{lem:torsion-pair}, these are torsion classes in $\Coh (X)$.
\end{proof}



\begin{remark}\label{remark1}
Note that, whenever we have a coherent sheaf $F$ in $\coh (\pi)_{\leq d}$ (in which case $F$ has codimension at least $c := 3-(d+1)$) that is $\Phi$-WIT$_1$, we have
\[
  \ch_{1,c}(F) \cdot D^{3-(c+1)} \leq 0.
\]
The reason is simply that $\ch_{1,c}(F)\cdot D^{3-(c+1)} = \ch_{0,c}(\wh{F}[-1])=-\ch_{0,c}(\wh{F})$, where $\ch_{0,c}(\wh{F})$ is nonnegative since $\wh{F}$ is also of codimension at least $c$.  For instance, when $F$ is a $\Phi$-WIT$_1$ sheaf that lies in $\coh (\pi)_1$, we have $\ch (F) = \begin{pmatrix} 0 & \ast & \ast \\ 0 & a & \ast \end{pmatrix}$ for some $a \leq 0$.  And when $F$ is a $\Phi$-WIT$_1$ fiber sheaf that is 1-dimensional, we have $\ch (F) = \begin{pmatrix} 0 & 0 & \ast \\ 0 & 0 & a \end{pmatrix}$ for some $a \leq 0$.  Similarly, for $F \in \Coh (\pi)_{\leq d}$ that is $\Phi$-WIT$_0$, we have
\[
  \ch_{1,c}(F) \cdot D^{3-(c+1)} \geq 0.
\]
\end{remark}

Another application of Proposition \ref{prop:summary} is the existence of slope-like functions other than $\mu_f$ (which was defined in Example \ref{paper13-eg2}):
\begin{itemize}
\item On the abelian category $\mathcal A := \Coh (X)$, we can define the functions
\begin{align*}
  C_0 (-) &:= \rk (-), \\
  C_1 (-) &:= \ch_{01}(-)\cdot H\cdot D.
\end{align*}
For any $F \in \Coh (X)$, that $C_1 (F) \geq 0$ when $C_0(F) =0$ follows from Proposition \ref{prop:summary}(1).  Therefore, we obtain a slope-like function
\[
  \mu^\ast (-) := \frac{C_1(-)}{C_0(-)} = \frac{\ch_{01}(-)\cdot H\cdot D}{\rk (-)}
\]
on $\Coh (X)$.
\item On the abelian category $\mathcal A := \{\Coh^{\leq 0}\}^\uparrow$, we can define the functions
\begin{align*}
  C_0 (-) &:= \ch_{10}(-)\cdot D^2, \\
  C_1 (-) &:= \ch_{11}(-)\cdot D.
\end{align*}
That $C_0$ is nonnegative on $\mathcal A$ follows from Proposition \ref{prop:summary}(1).  On the other hand, for any $F \in \mathcal A$, we know $F$ is $\Phi$-WIT$_0$ from \cite[Remark 3.14]{Lo11}. And if $C_0(F)=0$, we have $C_1 (F) \geq 0$ by Remark \ref{remark1}.  We thus obtain the following  slope-like function
\[
  \mu_\ast (-) := \frac{C_1(-)}{C_0(-)} =  \frac{\ch_{11}(-)\cdot D}{\ch_{10}(-)\cdot D^2}
\]
on $\mathcal A = \{\Coh^{\leq 0}\}^\uparrow$.
\end{itemize}

\subsection{Results on preservation of semistability}\label{sec-preservation}

We now consider the preservation of Gieseker semistability in the rest of the discussion.  Recall that we are writing $\omega := H + nD$.

The first of our theorems on preservation of semistability is the following:

\begin{theorem}\label{0602-theorem1}
Let $\ch$ be a fixed Chern character of the form $\ch = \begin{pmatrix} 0 & 0 & 0 \\ \ast & \ast & \ast \end{pmatrix}$ where $\ch_{10} \neq 0$. Then we have an isomorphism between the following moduli spaces:
\begin{itemize}
\item[(a)] The moduli space of $\omega$-semistable sheaves $F$ on $X$ with $\ch (F) = \ch$;
\item[(b)] The moduli space of $\omega$-semistable sheaves $E$ for $n \gg 0$ on $X$ with $\ch (E) = \Phi^{H} (\ch)$.
\end{itemize}
\end{theorem}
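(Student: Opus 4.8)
My plan is to realise the claimed isomorphism as the restriction of the autoequivalence $\Phi$, reducing the comparison of Gieseker semistability on the two sides to the term-by-term comparison of reduced Hilbert polynomials recorded in \eqref{0602-eq3} and \eqref{0602-eq6}. First I would observe that every $F\in\coh(X)$ with $\ch(F)=\ch$ lies in $\coh^{\mathrm{sec}}(X)$, hence in $\{\Coh^{\leq 0}\}^\uparrow$ by Remark \ref{0602-remark3}, and is therefore $\Phi$-WIT$_0$ by \cite[Remark 3.14]{Lo11}; so $\wh F:=\Phi(F)$ is a sheaf with $\ch(\wh F)=\Phi^H(\ch)=\begin{pmatrix} \ch_{10} & \ch_{11} & \ch_{12}\\ 0&0&0\end{pmatrix}$ (Proposition \ref{prop:product_change_class}), of positive rank $\ch_{10}>0$ by Proposition \ref{prop:summary}(3) and the hypothesis $\ch_{10}\neq 0$. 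Using base change for the relative transform, $\wh F|_s=\widehat{F|_s}$ for general $s\in S$; since $F|_s$ is $0$-dimensional its fibrewise transform is an iterated extension of degree-$0$ line bundles on $C$, hence a slope-$0$ semistable locally free sheaf. This yields simultaneously that $\wh F$ is torsion-free and that $\mu_f(\wh F)=0$, with every subsheaf $E'\subseteq\wh F$ satisfying $\mu_f(E')\leq 0$.

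Next I would unwind $\omega$-semistability of $E:=\wh F$ for $n\gg 0$ through \eqref{0602-eq6}. Its leading entry is $\tfrac{\ch_1(E')}{\rk(E')}\cdot D^2=2d\,\mu_f(E')$ (Lemma \ref{lem:slope} and \eqref{eqn:triple}), which is $\leq 0=2d\,\mu_f(E)$ for every subsheaf, strictly so unless $\mu_f(E')=0$. Hence for $n\gg0$ the sheaf $E$ is $\omega$-semistable if and only if the remaining entries of \eqref{0602-eq6} satisfy the required inequalities for every fibre-degree-$0$ subsheaf $E'$. On such subsheaves (which, by the correspondence below, are transforms $\wh{F'}$ of subsheaves $F'\subseteq F$) a direct computation with \eqref{eqn:triple} shows that the $\ch_1\cdot HD$ entry of \eqref{0602-eq6} equals $2d$ times the quantity $\tfrac{\ch_2(F')\cdot D}{\ch_1(F')\cdot D^2}$ governing the leading comparison \eqref{0602-eq3} for $F'$; that the $\ch_2\cdot D$ and $\chi$ entries vanish identically here; and that the surviving $\ch_2\cdot H$ entry matches the $\chi$-comparison coming from \eqref{0602-eq7} for $F'$, the constant shift contributed by $\td_2(X)$ cancelling on both sides. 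Thus the two reduced-Hilbert-polynomial orderings agree under $\Phi^H$.

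The remaining point is to match subobjects. Since $\coh^{\mathrm{sec}}(X)$ is a Serre subcategory (Lemma \ref{0602-lemma4}) consisting of $\Phi$-WIT$_0$ sheaves, $\Phi$ restricts to an exact equivalence onto its image $\mathcal C:=\Phi(\coh^{\mathrm{sec}}(X))$, under which subsheaves $F'\subseteq F$ correspond to subsheaves $\wh{F'}\subseteq E$. Conversely, for semistability testing it suffices to consider saturated $E'\subseteq E$; for such $E'$ with $\mu_f(E')=0$ both $E'$ and $E/E'$ are fibre-degree-$0$, so fibrewise $E'|_s$ is a slope-$0$ subsheaf of the slope-$0$ semistable bundle $E|_s$ and is therefore itself slope-$0$ semistable, placing $E'$ in $\mathcal C$. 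Hence every saturated fibre-degree-$0$ subsheaf of $E$ is of the form $\wh{F'}$, and the bijection above combined with the term-by-term matching shows, for $n\gg 0$, that $F$ is $\omega$-semistable if and only if $\wh F$ is. To make the threshold uniform over the moduli space I would invoke boundedness: the $\ch$-fixed semistable sheaves form a bounded family, so only finitely many Chern characters of potential destabilisers occur and a single $n_0$ suffices.

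Finally I would promote this object-level bijection to an isomorphism of moduli spaces by running the construction in families. Given a $T$-flat family of sheaves with Chern character $\ch$, the fibrewise WIT$_0$ property together with cohomology and base change (as in \cite{BM04}, compare \cite{FMNT}) shows that the relative $\Phi$ produces a $T$-flat family of torsion-free sheaves with Chern character $\Phi^H(\ch)$, and $\Psi$ supplies the inverse construction; the two induced morphisms of moduli functors are mutually inverse. I expect the \emph{main obstacle} to be precisely the subobject-correspondence step of the third paragraph together with the uniform choice of $n$: one must guarantee that \emph{every} potentially destabilising subsheaf of $E$, not merely those already lying in $\mathcal C$, is accounted for, which is exactly what the fibrewise slope-$0$ bound and the saturation argument are engineered to do, and one must ensure a single polarisation $\omega=H+n_0D$ detects semistability across the whole family. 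This is also where the argument meets the more general Theorem \ref{0602-theorem2}, whose $\mu_\ast$-versus-$\mu_\omega$ equivalence packages the same subobject comparison at the level of slope-like functions (compare Proposition \ref{pro6} and Lemma \ref{lemma19}).
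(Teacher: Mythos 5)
Your strategy (reduce to saturated subsheaves, transport them across $\Phi$, and compare \eqref{0602-eq6} with \eqref{0602-eq3} and \eqref{0602-eq7}) is viable, and the term-by-term matching in your second paragraph is correct; but the subobject-correspondence step --- which you yourself identify as the main obstacle --- is not closed by the argument you give. From ``$E'|_s$ is a slope-$0$ subsheaf of the slope-$0$ semistable bundle $E|_s$, hence slope-$0$ semistable'' you may conclude, via Lemma \ref{lemma10}, that $E'$ is torsion-free, $\mu_f$-semistable and $\Phi$-WIT$_1$, i.e.\ $E' \in \Phi(W_{0,X}\cap\Coh^{\leq 2}(X))$. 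This is strictly weaker than $E' \in \mathcal{C} = \Phi(\coh^{\mathrm{sec}}(X))$: nothing so far forces $c_1(\wh{E'})\cdot H D = 0$ or $\ch_2(\wh{E'})\cdot H = 0$ (the category $W_{0,X}\cap\Coh^{\leq 2}(X)$ is genuinely larger than the $\Phi$-WIT$_0$ part of $\coh^{\mathrm{sec}}(X)$), and --- what you actually need in order to quote the semistability of $F$ --- it does not exhibit $\wh{E'}$ as a subsheaf of $F$. Applying $\Phi$ to $0 \to E' \to E \to E/E' \to 0$ only gives the four-term exact sequence $0 \to \Phi^0(E/E') \to \wh{E'} \to F \to \Phi^1(E/E') \to 0$; as long as $\Phi^0(E/E')$ is not known to vanish, $\wh{E'}$ merely maps to $F$ with kernel, and you are back in the situation the paper's proof of Lemma \ref{0602-lemma2} confronts head-on, where the error terms must be controlled by positivity (Remark \ref{remark1}, Proposition \ref{prop:summary}, \cite[Lemma 2.6]{Lo7}). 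The missing idea is to run your fibrewise argument on the quotient as well: by saturation $E/E'$ is torsion-free, it has fibre degree $0$, and for general $s$ the restriction $(E/E')|_s$ is a slope-$0$ quotient of the slope-$0$ semistable bundle $E|_s$, hence itself slope-$0$ semistable; therefore $E/E'$ is $\mu_f$-semistable with $\mu_f=0$ and so $\Phi$-WIT$_1$ by Lemma \ref{lemma10}. Only then is $\Phi$ exact on the sequence, giving $0 \to \wh{E'} \to F \to \wh{E/E'} \to 0$, and only then does the Serre property of $\coh^{\mathrm{sec}}(X)$ (Lemma \ref{0602-lemma4}) place $\wh{E'}$ in $\coh^{\mathrm{sec}}(X)$ and hence $E'$ in $\mathcal{C}$. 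With this inserted your route does close, and it is genuinely different from the paper's: the paper never restricts to saturated subsheaves, but instead runs a lexicographic case analysis on an arbitrary subsheaf $A \subseteq \wh{F}$, estimating the contributions of $\Phi^0 B$ and $\Phi^1 B$ at each stage.

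Two smaller points. First, torsion-freeness of $\wh{F}$ does not follow from $\wh{F}|_s$ being locally free for general $s$: a torsion subsheaf could be supported over finitely many fibres and be invisible to general restriction. Use purity of $F$ and \cite[Lemma 4.3]{Lo11}, as the paper does, or argue that a torsion subsheaf of $\wh{F}$ would be $\Phi$-WIT$_1$ and torsion, hence lie in $\coh(\pi)_{\leq 1}$, and derive a contradiction. Second, for the isomorphism of moduli spaces you also need essential surjectivity onto side (b): every $\omega$-semistable $E$ for $n \gg 0$ with $\ch(E) = \Phi^{H}(\ch)$ must be shown to be $\Phi$-WIT$_1$ with $\wh{E}$ semistable of class $\ch$. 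This is the content of the paper's Lemma \ref{0602-lemma3}, and your phrase that ``$\Psi$ supplies the inverse construction'' presupposes it; the proof is short (semistability for $n \gg 0$ forces torsion-freeness and $\mu_f$-semistability with $\mu_f(E)=0$, then Lemma \ref{lemma10} and Lemma \ref{0602-lemma4} apply), but it is a separate statement from the equivalence you prove for sheaves already known to be of the form $\wh{F}$.
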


Notice that the moduli space in (a) is independent of the value of $n$, because for the given Chern character $\ch$, $H$ does not play a role in $\omega$-semistability. The only relevant intersection numbers are $\ch_{10} \cdot D^2$, $\ch_{11} \cdot D$ and $\ch_{12}$.

On an elliptic surface $X$, Yoshioka established an isomorphism between a moduli of 1-dimensional $\omega$-semistable sheaves and a moduli of 2-dimensional (and torsion-free) $\omega$-semistable sheaves on $X$  \cite[Theorem 3.15]{YosAS} (see also \cite[Proposition 3.4.5]{YosPII}).  Also, on elliptic Calabi-Yau threefolds $X$, one can find similar results for slope semistable sheaves in \cite[Lemma 6.64]{FMNT}.

We break the proof of Theorem \ref{0602-theorem1} into Lemma \ref{0602-lemma2} and Lemma \ref{0602-lemma3}.

\begin{lemma}\label{0602-lemma2}
Suppose $F \in \coh (X)$ is a 2-dimensional sheaf that is $\omega$-semistable with  $\ch (F) = \begin{pmatrix} 0 & 0 & 0 \\ \ast & \ast & \ast \end{pmatrix}$.  Then $\wh{F}$ is $\omega$-semistable for $n \gg 0$.
\end{lemma}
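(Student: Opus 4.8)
The plan is to use that $F$, lying in $\{\coh^{\leq 0}\}^\uparrow$, is $\Phi$-WIT$_0$, so that $\wh F := \Phi^0(F)$ is a genuine sheaf, and then to compare the Gieseker semistability of the $3$-dimensional $\wh F$ with that of the $2$-dimensional $F$ coordinate by coordinate in the lexicographic criterion \eqref{0602-eq6}. First I would record the numerics: by Proposition \ref{prop:product_change_class}, $\ch(\wh F)=\begin{pmatrix} a_{10} & a_{11} & a_{12} \\ 0 & 0 & 0 \end{pmatrix}$, so $\wh F$ has rank $a_{10}$, which is positive because $F$ is genuinely $2$-dimensional (so $\ch_1(F)\neq 0$, forcing $a_{10}\neq 0$, and $a_{10}\geq 0$ by Proposition \ref{prop:summary}(3)). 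A short intersection computation gives $\ch_1(\wh F)\cdot D^2=0$ and $\ch_1(\wh F)\cdot HD=2d\,a_{11}$, so if $v(E)$ denotes the lex vector \eqref{0602-eq6} of a sheaf $E$, then $v(\wh F)$ begins $\big(0,\ 2d\,a_{11}/a_{10},\dots\big)$. Since $\Phi^2=\iota^\ast[-1]$ (Proposition \ref{prop:FM_product}), $\wh F$ is $\Phi$-WIT$_1$ with $\wh{\wh F}=\iota^\ast F$; as $\Phi^0$ is left exact and $\Phi^0(\wh F)=H^0(\iota^\ast F[-1])=0$, every subsheaf $E'\subseteq\wh F$ satisfies $\Phi^0(E')\hookrightarrow\Phi^0(\wh F)=0$ and is therefore $\Phi$-WIT$_1$.

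Next I would run the lex comparison. For a positive-rank subsheaf $E'\subseteq\wh F$, Remark \ref{remark1} (with $c=0$, since $E'\in\coh(\pi)_{\leq 2}$ is $\Phi$-WIT$_1$) gives $\ch_1(E')\cdot D^2\leq 0$, so the first entry of $v(E')$ is $\leq 0=$ the first entry of $v(\wh F)$; subsheaves with strict inequality are harmless. Thus the whole question reduces to subsheaves whose restriction to a general fiber has degree $0$, i.e. $\ch_1(E')\cdot D^2=0$. Here I would first pass to the saturation, so that $Q:=\wh F/E'$ is torsion-free. Fiberwise $\wh F|_s$ is a degree-$0$ semistable bundle (the $\Phi_{\cE_C}$-transform of the $0$-dimensional $F|_s$, cf. Lemma \ref{lem:FM_curve}), so the torsion-free degree-$0$ quotient $Q|_s$ is semistable of degree $0$, hence $\Phi$-WIT$_1$ on the general fiber; being torsion-free and fiberwise $\Phi$-WIT$_1$, $Q$ is globally $\Phi$-WIT$_1$ by the mechanism used in the proof of Proposition \ref{pro6}. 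Consequently $\Phi^0(Q)=0$, the natural map $\wh{E'}=\Phi^1(E')\to\wh{\wh F}=\iota^\ast F$ is injective, and $\wh{E'}$ becomes a subsheaf of the $\omega$-semistable $2$-dimensional sheaf $\iota^\ast F$ (which has the same Chern character and semistability as $F$, since $\iota$ fixes $H$, $D$ and acts trivially on $\ch$).

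The comparison then transfers: a direct computation shows the second entry of $v(E')$ equals $2d\cdot\dfrac{\ch_2(\wh{E'})\cdot D}{\ch_1(\wh{E'})\cdot D^2}$, i.e. $2d$ times the leading slope of $\wh{E'}\subseteq\iota^\ast F$, while $\omega$-semistability of $F$ via \eqref{0602-eq3} forces this leading slope to be $\leq a_{11}/a_{10}$; when equality holds, the deeper entries of $v(E')$ match the successive coefficients of the reduced Hilbert polynomial comparison \eqref{0602-eq5} (and its $\chi$-level refinement) for the inclusion $\wh{E'}\subseteq\iota^\ast F$. Hence $v(E')\leq_{\mathrm{lex}}v(\wh F)$ for every subsheaf; combined with boundedness of the family of subsheaves of the fixed sheaf $\wh F$, this yields $\omega$-semistability of $\wh F$ for $n\gg 0$, and torsion-freeness follows since a positive-rank Gieseker-semistable sheaf is pure.

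The main obstacle I expect is the middle step together with the tie-breaking bookkeeping: one must control the $\Phi$-WIT$_0$ defect $\Phi^0(Q)$ — upgrading the fiberwise $\Phi$-WIT$_1$ statement for $Q$ to a global one so that destabilizing candidates really transform to honest subsheaves of $\iota^\ast F$ — and then match each entry of the five-term lex vector of the $3$-dimensional $\wh F$ with the corresponding term in the reduced Hilbert polynomial comparison of the $2$-dimensional $F$. The leading coordinate transfers immediately, but carrying the correspondence through the ties (equal slopes), so that the lower-order and $\chi$-level inequalities also transfer intact, is the delicate part.
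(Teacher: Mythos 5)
Your proposal reaches the right conclusion by a route that genuinely differs from the paper's in its middle section. The paper never saturates: it takes an \emph{arbitrary} subsheaf $A \subseteq \wh{F}$, applies $\Phi$ to get the four-term sequence $0 \to \Phi^0 B \to \wh{A} \overset{\alpha}{\to} F \to \Phi^1 B \to 0$, and spends most of the proof controlling the error term $\Phi^0 B$ --- showing, after the first reductions, that it is a $\Phi$-WIT$_1$ fiber sheaf with $\ch(\Phi^0 B) = \begin{pmatrix} 0 & 0 & \ast \\ 0 & 0 & b \end{pmatrix}$, $b \leq 0$, via Proposition \ref{prop:summary}, Lemma \ref{0602-lemma4}, \cite[Lemma 2.6]{Lo7} and \cite[Remark 3.24]{Lo11} --- so that each lexicographic entry of $A$ differs from that of $\image \alpha \subseteq F$ by a term of known sign. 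You instead kill the error term outright: saturating $E'$ and proving the torsion-free quotient $Q$ is globally $\Phi$-WIT$_1$ (fiberwise semistability of degree-$0$ quotients of $\wh{F}|_s$, then the torsion-free-plus-generically-fiberwise-WIT$_1$ upgrade --- which, for accuracy, is \cite[Lemma 3.18(ii)]{Lo11} as used in the proof of Lemma \ref{lemma16}(iii), not really Proposition \ref{pro6}) gives $\Phi^0(Q)=0$ and hence an honest inclusion $\wh{E'} \hookrightarrow \iota^\ast F$. This buys a cleaner endgame: Lemma \ref{0602-lemma4} then forces $\ch(\wh{E'}) = \begin{pmatrix} 0 & 0 & 0 \\ \ast & \ast & \ast \end{pmatrix}$, so the third and fifth entries of the lex vector \eqref{0602-eq6} are automatic equalities ($0$ on both sides), the fourth entry is literally the $\chi$-level Gieseker comparison for $\wh{E'} \subseteq \iota^\ast F$, and your identification of the second entry with $2d\,\mu_\ast(\wh{E'})$ is correct. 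The cost is the extra base-change and fiberwise machinery; the paper's more computational route avoids saturation and any semistability statement about the quotient.

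Two points need repair, one of which is a genuine logical flaw. First, your final sentence derives torsion-freeness of $\wh{F}$ \emph{from} its Gieseker semistability; this is circular. Purity is part of the definition of Gieseker semistability, and the lex-vector criterion \eqref{0602-eq6} only tests positive-rank subsheaves: a torsion subsheaf $T \subset \wh{F}$ has reduced Hilbert polynomial of degree $2$, hence satisfies $p_T(m) \leq p_{\wh{F}}(m)$ for $m \gg 0$ automatically, so impurity is invisible to all of your inequalities. Purity must be established independently and up front, exactly as the paper does: $F$ is pure of dimension $2$ and $\Phi$-WIT$_0$, hence $\wh{F}$ is pure of dimension $3$ by \cite[Lemma 4.3]{Lo11}. (This is also needed earlier in your own argument, e.g.\ to know that saturations of nonzero subsheaves have positive rank.) Second, a smaller gap: you call $Q|_s$ a \emph{torsion-free} degree-$0$ quotient of $\wh{F}|_s$, but the restriction of a torsion-free sheaf to a fiber need not be torsion-free. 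Here it can be fixed in one line --- for general $s$ the fiber misses the singular locus of $Q$, or alternatively a torsion subsheaf of $Q|_s$ would force $\deg Q|_s > 0$, contradicting $\deg Q|_s = 0$ and the semistability of $\wh{F}|_s$ --- but as written the sentence assumes what needs an argument.
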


\begin{proof}
To begin with, that $\ch(F)$ is of the form described implies $F \in \{\coh^{\leq 0}\}^\uparrow$ by Proposition \ref{prop:summary}(3) above.  Hence $F$ is $\Phi$-WIT$_0$ \cite[Remark 3.14]{Lo11}. The $\omega$-semistability of $F$ means that it is pure of dimension 2.  Hence $\wh{F}$ is pure of dimension 3 by \cite[Lemma 4.3]{Lo11}.

Take any short exact sequence
\[
0 \to A \to \wh{F} \to B \to 0
\]
in $\coh (X)$ in which $A, B \neq 0$.  This gives the exact sequence of cohomology in $\coh (X)$
\begin{equation}\label{0602-eq12}
0 \to \Phi^0 B \to \wh{A} \overset{\alpha}{\to} F \to \Phi^1 B \to 0
\end{equation}
in which both $\image \alpha$ and $\Phi^1 B$ lie in $\{\coh^{\leq 0}\}^\uparrow$.

 Since $A$ is a subsheaf of $\wh{F}$, it is $\Phi$-WIT$_1$, which implies $c_1(A)\cdot D^2 \leq 0$ by Remark \ref{remark1}.  If $c_1(A)\cdot D^2 <0$, then we have
\begin{equation}\label{0602-eq11}
  \frac{c_1(A)\cdot D^2}{\rk (A)} < 0 = \frac{c_1(\wh{F})\cdot D^2}{\rk (\wh{F})},
\end{equation}
in which case $A$ does not destabilise $\wh{F}$ with respect to $\omega$-semistability for $n \gg 0$.  Therefore, we assume $c_1(A)\cdot D^2 =0=c_1(\wh{F})\cdot D^2$ from now on, i.e.\ $\ch (A) = \begin{pmatrix} \ast & \ast & \ast \\ 0 & \ast & \ast \end{pmatrix}$.

That $c_1(A)\cdot D^2 =0$ implies $\rk (\wh{A})=0$, and so $\Phi^0 B$ is a $\Phi$-WIT$_1$ torsion sheaf, implying $\Phi^0 B \in \coh (\pi)_{\leq 1}$ by \cite[Lemma 2.6]{Lo7}.  Hence $\ch_2 (\Phi^0 B)\cdot D \leq 0$ by Lemma \ref{lem:pos0} together with Remark \ref{remark1}, giving us $\ch_2 (\wh{A}) \cdot D \leq \ch_2 (\image \alpha) \cdot D$.  We also have $c_1(\wh{A})\cdot D^2 = c_1 (\image \alpha) \cdot D^2$.  Therefore, we have $\mu_\ast (\wh{A})\leq \mu_\ast (\image \alpha)$.  Overall, we now have
\begin{align*}
  \mu^\ast (A) &= \mu_\ast (\wh{A}) \\
  &\leq \mu_\ast (\image \alpha) \\
  &\leq \mu_\ast (F) \text{ by the $\mu_\ast$-semistability of $F$ and Lemma \ref{0602-lemma4}} \\
  &= \mu^\ast (\wh{F}).
\end{align*}

Now, if $\mu^\ast (A) < \mu^\ast (\wh{F})$, then $A$ would not destabilise $\wh{F}$ with respect to $\omega$-semistability for $n \gg 0$.  Hence we will also assume that $\mu^\ast (A) = \mu^\ast (\wh{F})$ from now on.

Note that all the terms in the exact sequence \eqref{0602-eq12} are torsion sheaves.  In particular, we have
\begin{align*}
  -\ch_2 (A)\cdot D &= c_1 (\wh{A})\cdot H \cdot D \\
  &\geq 0 \text{ by Proposition \ref{prop:summary}(1)},
\end{align*}
 and so $\ch_2 (A)\cdot D \leq 0$.  If we have $\ch_2 (A) \cdot D < 0=\ch_2 (\wh{F}) \cdot D$, then $A$ would not destabilise $\wh{F}$ with respect to $\omega$-semistability for $n \gg 0$.  Thus we will also assume that $\ch_2 (A) \cdot D = 0$ from this point onwards, and we have reduced to the case where $\ch (A) = \begin{pmatrix} \ast & \ast & \ast \\ 0 & 0 & \ast \end{pmatrix}$, i.e.\ $\ch (\wh{A}) =  \begin{pmatrix} 0  & 0 & \ast \\ \ast & \ast & \ast \end{pmatrix}$ and so $\wh{A} \in \coh^{\lrcorner}(X)$.  It follows that $\Phi^0 B$ also lies in $\coh^{\lrcorner}(X)$ by Lemma \ref{0602-lemma4}.  Since $\Phi^0B$ is $\Phi$-WIT$_1$ and torsion, it lies in  $\coh (\pi)_{\leq 1}$ by \cite[Lemma 2.6]{Lo7}, which  means $c_1(\Phi^0 B)\cdot D^2=0$.  Hence $\ch (\Phi^0 B) = \begin{pmatrix} 0 & 0 & \ast \\ 0 & \ast & \ast \end{pmatrix}$.  We thus see that $\Phi^0 B$ is a $\Phi$-WIT$_1$ sheaf lying in $\coh^{\leq 1}(X)$, which in turn implies $\Phi^0 B$ is a fiber sheaf by \cite[Remark 3.24]{Lo11}.  Hence $\ch (\Phi^0 B) = \begin{pmatrix} 0 & 0 & \ast \\ 0 & 0 & b \end{pmatrix}$, where $b \leq 0$ by Remark \ref{remark1}.

 From the short exact sequence in $\coh (X)$
 \begin{equation}\label{0602-eq15}
 0 \to \Phi^0 B \to \wh{A} \to \image \alpha \to 0,
 \end{equation}
 we now have the short exact sequence (by applying $\Phi$)
 \begin{equation}\label{0602-eq14}
 0 \to A \to \wh{\image \alpha} \to \wh{\Phi^0 B} \to 0,
 \end{equation}
 in which $\rk (A) = \rk (\wh{\image \alpha})$ while
 \begin{align*}
   \ch_2 (A)\cdot H &= \ch_2 (\wh{\image \alpha})\cdot H - \ch_2 (\wh{\Phi^0B})\cdot H \\
   &= \ch_2 (\wh{\image \alpha})\cdot H - (-\ch_{12}(\Phi^0B)) \\
   &\leq \ch_2 (\wh{\image \alpha})\cdot H.
 \end{align*}
 That is, we have
 \begin{equation}\label{0602-eq16}
   \frac{\ch_2 (A)\cdot H}{\rk A} \leq \frac{\ch_2 (\wh{\image \alpha})\cdot H}{\rk (\wh{\image \alpha})}.
 \end{equation}

 Since $\wh{\Phi^0 B}$ is a fiber sheaf, from \eqref{0602-eq14} we have $\mu^\ast (\wh{\image \alpha}) = \mu^\ast (A)=\mu^\ast (\wh{F})$, which yields $\mu_\ast (\image \alpha) = \mu_\ast (\wh{A}) = \mu_\ast (F)$.  Now, Lemma \ref{0602-lemma4} says that $\image (\alpha), \Phi^1B$ both have Chern characters of the form $\begin{pmatrix} 0 & 0 & 0 \\ \ast & \ast & \ast \end{pmatrix}$. The $\omega$-semistability of $F$ now implies
 \[
   \frac{\chi (\image \alpha)}{c_1 (\image \alpha)\cdot D^2} \leq \frac{\chi (F)}{c_1(F)\cdot D^2},
 \]
 which is equivalent to
 \[
   \frac{\ch_2 (\wh{\image \alpha})\cdot H}{\rk (\wh{\image \alpha})} \leq \frac{\ch_2 (\wh{F})\cdot H}{\rk (\wh{F})}.
 \]
 This inequality, together with \eqref{0602-eq16}, gives
 \[
    \frac{\ch_2 (A)\cdot H}{\rk A} \leq \frac{\ch_2 (\wh{F})\cdot H}{\rk (\wh{F})}.
 \]
 If we have strict inequality here, then $A$ would not destabilise $\wh{F}$ with respect to $\omega$-semistability with respect to $n \gg 0$; therefore, we will assume that we have equality here.

 Now, since $\wh{A} \in \coh^{\lrcorner}$, we have $\ch_{02}(\wh{A})\cdot H \geq 0$ by Proposition \ref{prop:summary}(2).  Hence $\chi (A) = -\ch_{02}(\wh{A})\cdot H \leq 0$.  
 It follows that
 \[
   \frac{\chi (A)}{\rk A} \leq 0 = \frac{\chi (\wh{F})}{\rk \wh{F}}.
 \]
 This concludes the proof that $\wh{F}$ is $\omega$-semistable for $n \gg 0$.
%
\end{proof}

Now we prove the converse of Lemma \ref{0602-lemma2}:

\begin{lemma}\label{0602-lemma3}
Suppose $E \in \coh (X)$ is a  coherent sheaf supported in dimension 3 that is $\omega$-semistable for $n \gg 0$ with $\ch (E) = \begin{pmatrix} \ast & \ast & \ast \\ 0 & 0 & 0 \end{pmatrix}$.  Then $E$ is torsion-free, $\Phi$-WIT$_1$ and $\wh{E}$ is $\omega$-semistable.
\end{lemma}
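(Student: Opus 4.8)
The plan is to establish the three assertions in order: first that $E$ is torsion-free, then that $E$ is $\Phi$-WIT$_1$ (which makes $\wh E$ a well-defined sheaf), and finally that $\wh E$ is $\omega$-semistable. Torsion-freeness is immediate: a Gieseker $\omega$-semistable sheaf is pure, and a pure sheaf whose support has dimension $3=\dim X$ admits no subsheaf of smaller-dimensional support, hence is torsion-free. For the $\Phi$-WIT$_1$ property I would first observe that $\ch(E)=\begin{pmatrix} a_{00} & a_{01} & a_{02} \\ 0 & 0 & 0\end{pmatrix}$ with $a_{00}=\rk(E)>0$ forces $c_1(E)\cdot D^2=0$; since $D^2$ is a positive multiple of the fiber class $f$, this says $\mu_f(E)=0$. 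Gieseker $\omega$-semistability implies $\mu_\omega$-semistability, and the top-order coefficient in $n$ of $\mu_\omega(-)$ is a positive multiple of $\mu_f(-)$, so $E$ is $\mu_f$-semistable. Lemma~\ref{lemma10}, in the case $\mu_f(E)=0$, then yields $E\in\mathcal F_X\cap\Phi(W_{0,Y}\cap\Coh^{\leq 2})$, i.e. $E=\Phi(G)$ for a $\Phi$-WIT$_0$ sheaf $G$. Applying $\Phi$ once more and invoking $\Phi^2=\iota^\ast(-)[-1]$ from Proposition~\ref{prop:FM_product} gives $\Phi(E)=\iota^\ast G[-1]$, a sheaf concentrated in degree $1$; hence $E$ is $\Phi$-WIT$_1$, and $\wh E=\iota^\ast G$ is a $2$-dimensional sheaf in $\coh^{\mathrm{sec}}(X)$ with $\ch(\wh E)=\begin{pmatrix}0&0&0\\ a_{00}&a_{01}&a_{02}\end{pmatrix}$.

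For the semistability of $\wh E$ the strategy is to transport destabilising subsheaves across the transform. Since $E$ is $\Phi$-WIT$_1$, the sheaf $\wh E$ is $\Phi$-WIT$_0$ with $\widehat{\wh E}=\iota^\ast E$ (again by Proposition~\ref{prop:FM_product}). Given any subsheaf $A\subseteq\wh E$ with quotient $B$, Lemma~\ref{0602-lemma4} tells us $\coh^{\mathrm{sec}}(X)$ is a Serre subcategory, so $A$ and $B$ lie in $\coh^{\mathrm{sec}}(X)\subseteq\{\Coh^{\leq 0}\}^\uparrow$ and are therefore $\Phi$-WIT$_0$. Applying $\Phi$, which is exact on WIT$_0$ objects, to $0\to A\to\wh E\to B\to 0$ produces a short exact sequence $0\to\widehat A\to\iota^\ast E\to\widehat B\to 0$ of sheaves, exhibiting $\widehat A$ as a subsheaf of $\iota^\ast E$. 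Because $\iota^\ast\omega=\omega$ and $\ch(\iota^\ast E)=\ch(E)$, the sheaf $\iota^\ast E$ is $\omega$-semistable for $n\gg 0$, so $p(\widehat A)\leq p(\iota^\ast E)$ for $n\gg 0$.

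It then remains to verify that this inequality is equivalent to the inequality $p(A)\leq p(\wh E)$ needed for $A$ not to destabilise $\wh E$. By Proposition~\ref{prop:product_change_class}, $\ch(A)=\begin{pmatrix}0&0&0\\ b_{10}&b_{11}&b_{12}\end{pmatrix}$ transforms to $\ch(\widehat A)=\begin{pmatrix} b_{10}&b_{11}&b_{12}\\ 0&0&0\end{pmatrix}$. Substituting this into the $3$-dimensional lexicographic vector \eqref{0602-eq6} for $\widehat A\subseteq\iota^\ast E$, and into the $2$-dimensional comparison \eqref{0602-eq3}--\eqref{0602-eq5} (together with the $\chi$-term of \eqref{0602-eq7}) for $A\subseteq\wh E$, and using $H^2=0$ and $\td_1(X)=0$, one finds that in both orderings all the a priori leading entries are forced to be ties and that the two genuinely governing comparisons reduce in each case to $\tfrac{b_{11}}{b_{10}}$ against $\tfrac{a_{01}}{a_{00}}$, followed by $\tfrac{b_{12}}{b_{10}}$ against $\tfrac{a_{02}}{a_{00}}$. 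Hence $p(\widehat A)\leq p(\iota^\ast E)$ if and only if $p(A)\leq p(\wh E)$, and since $A$ was arbitrary this shows $\wh E$ is $\omega$-semistable, completing the argument.

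I expect the last step to be the main obstacle. Precisely because so many intersection numbers ($c_1\cdot D^2$, $\ch_2\cdot D$, $\ch_2\cdot H$, $HD\cdot\ch_1$ and $\chi$) vanish identically for these special Chern characters, most of the leading entries in both Gieseker orderings are ties, and the delicate part is to track which surviving entry actually decides each comparison and to confirm that these survivors correspond correctly across the degree shift produced by $\Phi^H$ (where, as the paper emphasises, classes move between different components of $\ch$). By contrast, torsion-freeness and the $\Phi$-WIT$_1$ property are essentially formal consequences of Lemma~\ref{lemma10} and Proposition~\ref{prop:FM_product}.
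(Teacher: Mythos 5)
Your first two steps are correct and follow the paper's own route: torsion-freeness from purity in top dimension, then $\mu_f(E)=0$ plus $\mu_f$-semistability feeding into Lemma \ref{lemma10}, with $\Phi\circ\Phi=\iota^\ast(-)[-1]$ upgrading the conclusion $E\in\mathcal F_X\cap\Phi(W_{0,Y}\cap\Coh^{\leq 2})$ to the $\Phi$-WIT$_1$ property. Your bookkeeping of the involution $\iota^\ast$ (writing $\widehat{\wh{E}}=\iota^\ast E$ and noting $\iota^\ast\omega=\omega$) is actually more careful than the paper, which harmlessly suppresses it. The transport of subsheaves $A\subseteq\wh{E}$ through $\Phi$ via Lemma \ref{0602-lemma4}, Proposition \ref{prop:summary}(3) and the WIT$_0$ property is also exactly the paper's mechanism, and when $b_{10}\neq 0$ your identification of the two surviving comparisons, $b_{11}/b_{10}$ against $a_{01}/a_{00}$ and then $b_{12}/b_{10}$ against $a_{02}/a_{00}$, is correct: the Todd correction $\chi(A)=b_{12}+2b_{10}$ shifts both sides of the second comparison by the same constant, so it cancels as you implicitly assume.

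The genuine gap is the case $b_{10}=\ch_{10}(A)=0$, which you never exclude and which your final equivalence cannot see. Two things go wrong there. First, every quantity in your concluding comparison divides by $b_{10}$ (and $\rk(\wh{A})=b_{10}$, so the vector \eqref{0602-eq6}, valid only for sheaves of dimension 3, does not apply to $\wh{A}$), so the claimed biconditional is meaningless for such $A$. Second, and more substantively, $\omega$-semistability of the 2-dimensional sheaf $\wh{E}$ includes purity, and purity can never be extracted from inequalities of reduced Hilbert polynomials: a nonzero subsheaf $A\subseteq\wh{E}$ with $b_{10}=0$ has $\ch_1(A)=0$ (since $A\in\coh^{\mathrm{sec}}(X)$), hence dimension $\leq 1$, hence a reduced Hilbert polynomial of degree $\leq 1$ in $m$, which is eventually smaller than the degree-2 reduced polynomial of $\wh{E}$; so it satisfies $p(A)\leq p(\wh{E})$ and yet destroys semistability. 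The paper closes precisely this case: if $\ch_{10}(M)=0$ then $M$ lies in $\coh(\pi)_{\leq 1}$, so $\wh{M}$ is a nonzero subsheaf of $E$ lying in $\coh(\pi)_{\leq 1}\subset\Coh^{\leq 2}(X)$ --- in your language, a nonzero torsion subsheaf of the torsion-free sheaf $\iota^\ast E$ --- which is a contradiction. Adding this observation rules out all such $A$, which simultaneously gives purity of $\wh{E}$ and legitimises the divisions by $b_{10}$; with it, your proof is complete and coincides with the paper's.
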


\begin{proof}
That $E$ is $\omega$-semistable for $n \gg 0$ and that it is supported in dimension 3 together imply it is torsion-free.  The $\omega$-semistability for $n \gg 0$ also implies $E$ is $\mu_f$-semistable.  Since $\mu_f (E)=0$ by assumption, Lemma \ref{lemma10} tells us that  $E$ is $\Phi$-WIT$_1$ and $\wh{E}$ is a torsion sheaf.  Moreover, since $\ch(\wh{E}) = \begin{pmatrix} 0 & 0 & 0 \\ \ast & \ast & \ast \end{pmatrix}$, we have $\wh{E} \in \{\coh^{\leq 0}\}^\uparrow$ by Proposition \ref{prop:summary}(3) above.  We will now check that $\wh{E}$ is indeed $\omega$-semistable.

Take any short exact sequence
\begin{equation}\label{0602-eq17}
0 \to M \to \wh{E} \to N \to 0
\end{equation}
in $\coh (X)$.  Then the Chern characters of $M, N$ are also of the form $\begin{pmatrix} 0 & 0 & 0 \\ \ast & \ast & \ast \end{pmatrix}$ by Lemma \ref{0602-lemma4}, and they both lie in  $\{\coh^{\leq 0}\}^\uparrow$ by Proposition \ref{prop:summary}(3) above.  That is, all the terms in \eqref{0602-eq17} are $\Phi$-WIT$_0$, and \eqref{0602-eq17} is taken by $\Phi$ to the short exact sequence in $\coh (X)$
  \[
0 \to \wh{M} \to E \to \wh{N} \to 0.
  \]
If $c_1(M) \cdot D^2=0$, then $M \in \coh (\pi)_{\leq 1}$, and so $\wh{M}$ is a subsheaf of $E$ lying in $\coh (\pi)_{\leq 1} \subset \coh^{\leq 2}(X)$, contradicting the purity of $E$.  Therefore, we can assume that $c_1(M) \cdot D^2 >0$.  Note that $\ch (\wh{M})$ is also of the form $\begin{pmatrix} \ast & \ast & \ast \\ 0 & 0 & 0 \end{pmatrix}$.

Now, if we have
\[
  \mu_\ast (M) := \frac{\ch_2(M)\cdot D}{\ch_1(M)\cdot D^2} > \frac{\ch_2(\wh{E})\cdot D}{\ch_1(\wh{E})\cdot D^2}=: \mu_\ast (\wh{E}),
\]
then we have, equivalently, $\mu^\ast (\wh{M}) > \mu^\ast (E)$, contradicting that $E$ is $\omega$-semistable for $n \gg 0$.  Hence we have $\mu_\ast (M) \leq \mu_\ast (\wh{E})$.  (That is,  $\wh{E}$ is at least $\mu_\ast$-semistable.)

Suppose $\mu_\ast (M) = \mu_\ast (\wh{E})$ but $\frac{\chi(M)}{\ch_1(M)\cdot D^2} > \frac{\chi (\wh{E})}{\ch_1 (\wh{E})\cdot D^2}$.  This inequality is equivalent to $\frac{\ch_2 (\wh{M})\cdot H}{\rk \wh{M}} > \frac{\ch_2(E) \cdot H}{\rk E}$, which implies that $\wh{M}$ destabilises $E$ with respect to $\omega$-semistability for $n \gg 0$.  Hence we must have $\frac{\chi(M)}{\ch_1(M)\cdot D^2} \leq  \frac{\chi (\wh{E})}{\ch_1 (\wh{E})\cdot D^2}$, concluding the proof that $\wh{E}$ is $\omega$-semistable.
\end{proof}

\begin{proof}[Proof of Theorem \ref{0602-theorem1}]
Take any coherent sheaf $F$ on $X$ that is $\omega$-semistable with $\ch(F)=\ch$.  By Lemma \ref{0602-lemma2} and its proof, we know $F$ is $\Phi$-WIT$_0$ and $\wh{F}$ is $\omega$-semistable for $n \gg 0$.  Conversely, take any coherent sheaf $E$ on $X$ with $\ch (E) = \Phi^{H}(\ch)$.  Then by Lemma \ref{0602-lemma3}, we know $E$ is $\Phi$-WIT$_1$ and $\wh{E}$ is $\omega$-semistable.  Thus the theorem is proved.
\end{proof}

The following  is a variant of Theorem \ref{0602-theorem1}:

\begin{theorem}\label{0602-theorem2}
Let $\omega = H+nD$.  We have an equivalence of categories induced by $\Phi$
\begin{multline}
\{ E \in \{\Coh^{\leq 0}\}^\uparrow : \ch_1(E) \neq 0, E \text{ is $\mu_\ast$-semistable}\} \\
 \overset{\thicksim}{\to} \{E \in \Phi (\{\Coh^{\leq 0}\}^\uparrow) : \ch_0(E) \neq 0, E \text{ is $\mu_\omega$-semistable for $n \gg 0$}\}.
\end{multline}
\end{theorem}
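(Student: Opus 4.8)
The plan is to exhibit the claimed functor as the restriction of $\Phi$ and to reduce everything to a single slope comparison. First I would record that $\{\Coh^{\leq 0}\}^\uparrow$ is a Serre subcategory of $\Coh(X)$ (its objects are exactly the sheaves whose support meets each fiber in dimension $0$, a condition stable under subobjects, quotients and extensions) and that all of its objects are $\Phi$-WIT$_0$ by \cite[Remark~3.14]{Lo11}; hence $\Phi$ restricts to an exact equivalence of abelian categories $\{\Coh^{\leq 0}\}^\uparrow\xrightarrow{\sim}\Phi(\{\Coh^{\leq 0}\}^\uparrow)\subseteq\Coh(X)$. For such an $F$ one has $\ch_0(\Phi F)\neq 0\iff\ch_{10}(F)\cdot D^2\neq 0$, and, because $\ch_0(F)=0$, Corollary~\ref{cor:slope} gives the exact proportionality $\mu_\omega(\Phi F)=4dn\,\mu_\ast(F)$. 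Since a nonzero $\mu_\ast$-semistable object automatically lies in $(\cB_0)^\circ$, the conditions $\ch_1(F)\neq 0$ and $\ch_0(\Phi F)\neq 0$ pick out the same objects, and the theorem reduces to: for $F\in\{\Coh^{\leq 0}\}^\uparrow$ with $\ch_{10}(F)\cdot D^2\neq 0$, $F$ is $\mu_\ast$-semistable iff $\Phi F$ is torsion-free and $\mu_\omega$-semistable for $n\gg 0$.

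The reverse implication is the easy one. If $\Phi F$ is torsion-free and $\mu_\omega$-semistable, take any nonzero $F'\subseteq F$ in $\{\Coh^{\leq 0}\}^\uparrow$; it is again in this Serre subcategory and $\Phi$-WIT$_0$, so $\Phi F'\subseteq\Phi F$. Were $\ch_{10}(F')\cdot D^2=0$, then $\Phi F'$ would be a torsion subsheaf of the torsion-free $\Phi F$, hence $0$ and $F'=0$; so every nonzero subobject has $\ch_{10}(F')\cdot D^2>0$, and the proportionality together with $\mu_\omega$-semistability of $\Phi F$ gives $\mu_\ast(F')\leq\mu_\ast(F)$. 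Thus $F$ is $\mu_\ast$-semistable.

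The forward implication is the crux, and the obstacle is that a subsheaf $A\subseteq\Phi F$ need not be of the form $\Phi(F')$. Fix $0\neq A\subseteq\Phi F$ with quotient $B$. Applying $\Phi$ to $0\to A\to\Phi F\to B\to 0$ and using $\Phi\circ\Phi=\iota^\ast[-1]$ (Proposition~\ref{prop:FM_product}), so that $\Phi^0(\Phi F)=0$ and $\Phi^1(\Phi F)=\iota^\ast F$, yields $\Phi^0 A=0$ — so $A$ is $\Phi$-WIT$_1$ — together with
\begin{equation*}
0\to\Phi^0 B\to\wh{A}\xrightarrow{\ \alpha\ }\iota^\ast F\to\Phi^1 B\to 0,\qquad \wh{A}=\Phi^1 A.
\end{equation*}
As $\wh{A}$ is a sheaf, $\rk\wh{A}=-\ch_{10}(A)\cdot D^2/(2d)\geq 0$, i.e.\ $\ch_{10}(A)\cdot D^2\leq 0$. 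If $\ch_{10}(A)\cdot D^2<0$ then $\rk A>0$ (a torsion sheaf has $\ch_{10}\cdot D^2\geq 0$ by Proposition~\ref{prop:summary}(1)), the $n^2$-coefficient of $\mu_\omega(A)$ is negative, and $A$ cannot destabilise for $n\gg 0$. If $\ch_{10}(A)\cdot D^2=0$ and $\rk A>0$, then $\wh{A}$ is torsion; set $I:=\image\alpha\subseteq\iota^\ast F$, a subobject of $\{\Coh^{\leq 0}\}^\uparrow$ with $\ch_{10}(I)\cdot D^2>0$. The kernel $\Phi^0 B$ is a torsion sheaf that is $\Phi$-WIT$_1$ (every $\Phi^0$ is $\Phi$-WIT$_1$, again from $\Phi\circ\Phi=\iota^\ast[-1]$); applying Proposition~\ref{prop:summary}(1) to $\Phi^0 B$ and to its transform forces $\ch_{10}(\Phi^0 B)\cdot D^2=0$ and $\ch_{11}(\Phi^0 B)\cdot D\leq 0$, whence the kernel sequence gives
\begin{equation*}
\mu_\ast(\wh{A})\ \leq\ \mu_\ast(I)\ \leq\ \mu_\ast(F),
\end{equation*}
the last step by $\mu_\ast$-semistability of $\iota^\ast F$ (which is $\mu_\ast$-semistable with the same slope as $F$). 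Under the proportionality this reads $\mu_\omega(A)\leq\mu_\omega(\Phi F)$, as wanted.

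The remaining case $\rk A=0$ simultaneously establishes torsion-freeness: here $\ch_{10}(A)\cdot D^2=0$, so $I$ (a quotient of the torsion sheaf $\wh{A}$) satisfies $\ch_{10}(I)\cdot D^2=0$, i.e.\ $I\in\cB_0$; since $\iota^\ast F\in(\cB_0)^\circ$ we get $I=0$, hence $\wh{A}\cong\Phi^0 B$, which is both $\Phi$-WIT$_0$ (being $\Phi^1 A$) and $\Phi$-WIT$_1$ (being $\Phi^0 B$), forcing $\wh{A}=0$ and $A=0$. Finally I would invoke boundedness: the subsheaves of the fixed sheaf $\Phi F$ that could violate $\mu_\omega$-semistability form a bounded family, so only finitely many numerical types occur and a single threshold works for all $n\gg 0$. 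Combined with the exactness of $\Phi$ on $\{\Coh^{\leq 0}\}^\uparrow$, the two implications give the asserted equivalence. The main difficulty is precisely this forward direction: one must force $A$ to be WIT$_1$, push it through $\Phi$ to a genuine subsheaf $I\subseteq F$, and absorb the error term $\Phi^0 B$ using the positivity of Proposition~\ref{prop:summary}.
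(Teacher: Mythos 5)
Your proposal is correct and is essentially the paper's own argument: the same reduction to slope comparisons via the proportionality between $\mu_\omega$ and $\mu_\ast$, the same case analysis on $\ch_{10}(A)\cdot D^2$ for a subsheaf $A$ of the transform, the same four-term sequence $0\to\Phi^0 B\to\wh{A}\to\iota^\ast F\to\Phi^1 B\to 0$, and the same chain $\mu_\ast(\wh{A})\leq\mu_\ast(\image\alpha)\leq\mu_\ast(F)$ obtained by absorbing $\Phi^0 B$ through positivity. The only deviations are self-contained substitutes for the paper's citations (you derive torsion-freeness of $\Phi F$ and the vanishing $\ch_{10}(\Phi^0 B)\cdot D^2=0$ directly from the WIT properties and Proposition \ref{prop:summary}(1), where the paper quotes \cite[Lemma 4.3]{Lo11} and \cite[Lemma 2.6]{Lo7}), plus an explicit boundedness remark for uniformity in $n$ that the paper leaves implicit; note only that in your last case the correct reason that $\ch_{10}(I)\cdot D^2=0$ is additivity plus Proposition \ref{prop:summary}(1) applied to $\Phi^0 B$ and $I$ (using $\ch_{10}(\wh{A})\cdot D^2=0$), not merely that $I$ is a quotient of a torsion sheaf.
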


In the statement of  Theorem \ref{0602-theorem2}, it is implicit that a coherent sheaf lying in the category on the left is a $\Phi$-WIT$_0$ sheaf supported in dimension 2, while a coherent sheaf lying in the category on the right is a $\Phi$-WIT$_1$ sheaf supported in dimension 3 (and torsion-free).

Also, we can interpret Theorem \ref{0602-theorem2} as follows: the notion of semistability for coherent sheaves in $\{\Coh^{\leq 0}\}^\uparrow$ that corresponds, under the FMT $\Phi$, to $\mu_\omega$-semistability for $n \gg 0$, is precisely the notion of $\mu_\ast$-semistability.


\begin{proof}
Take any coherent sheaf $E$ lying in the category on the left.  That $E$ is $\mu_\ast$-semistable implies it is pure 2-dimensional.  Also, that $E$ lies in $\{\coh^{\leq 0}\}^\uparrow$ implies $E$ is $\Phi$-WIT$_0$ by \cite[Remark 3.14]{Lo11}, and thus $\wh{E}$ is torsion-free by \cite[Lemma 4.3]{Lo11}.  Now, consider any short exact sequence in $\coh (X)$
\[
  0 \to A \to \wh{E} \to B \to 0
\]
in which $A, B \neq 0$; it yields the long exact sequence in $\coh (X)$
\[
0 \to \Phi^0 B \to \wh{A} \overset{\alpha}{\to} E \to \Phi^1 B \to 0.
\]
Since $A$ is $\Phi$-WIT$_1$, we have $\mu_f (A) \leq 0$ by Remark \ref{remark1}.  Also, that $\ch_0(E)=0$ implies $\mu_f (\wh{E})=0$, and so $\mu_f (A) \leq 0 = \mu_f (\wh{E})$.  If we have $\mu_f (A) < \mu_f (\wh{E})$, then $A$ does not destabilise $\wh{E}$ with respect to $\mu_\omega$ as $n \gg 0$.  As a result, let us assume that $\mu_f (A)=0=\mu_f(\wh{E})$.  Then $c_1(A)\cdot D^2 = 0 = c_1(\wh{E})\cdot D^2$, and so $c_1(B)\cdot D^2=0$, which implies $\rk (\Phi B)=0$.  Since we already have $\rk (\Phi^1 B)=0$ (since $E$ is torsion), we must also have $\rk (\Phi^0 B)=0$.  Hence $\Phi^0 B$ is a $\Phi$-WIT$_1$ torsion sheaf, and hence lies in $\coh (\pi)_{\leq 1}$ by \cite[Lemma 2.6]{Lo7}.

Note that $c_1(\image \alpha)\cdot D^2$ must be nonzero, for if it is zero, then $c_1(\wh{A})\cdot D^2=0$ as well (since $\Phi^0B \in \coh (\pi)_{\leq 1}$), which is equivalent to $\rk (A)=0$, which is impossible since $\wh{E}$ is torsion-free and $A \neq 0$.  Thus $\mu_\ast (\image \alpha) < \infty$ and the $\mu_\ast$-semistability of $E$ gives $\mu_\ast (\image \alpha) \leq \mu_\ast (E)$.  Then
\begin{align*}
  \mu^\ast (A) = \frac{\ch_1(A)\cdot HD}{\rk (A)} &= \frac{\ch_2(\wh{A})\cdot D}{\ch_1 (\wh{A})\cdot D^2} \\
  &= \frac{ (\ch_2(\Phi^0B)+\ch_2(\image \alpha))\cdot D}{\ch_1(\wh{A})\cdot D^2} \\
  &\leq \frac{\ch_2 (\image \alpha)}{\ch_1 (\image \alpha)\cdot D^2} \\
  &= \mu_\ast (\image \alpha) \\
  &\leq \mu_\ast (E) = \mu^\ast (\wh{E}).
\end{align*}
Above, the first inequality holds because $\ch_2 (\Phi^0 B)\cdot D \leq 0$ (from $\Phi^0B$ being $\Phi$-WIT$_1$ and Remark \ref{remark1}) and $\ch_1(\wh{A})\cdot D^2 =\ch_1 (\image \alpha)\cdot D^2$ (from $\Phi^0 B \in \coh (\pi)_{\leq 1}$).  This shows that $\wh{E}$ is $\mu_\omega$-semistable for $n \gg 0$.  Hence $\wh{E}$ lies in the category on the right.

For the other direction, take any coherent sheaf $E$ lying in the category on the right.  That $E$ is $\mu_\omega$-semistable for $n \gg 0$ implies $E$ is torsion-free and is $\mu_f$-semistable.  Also, the assumption $E \in \Phi (\{\coh^{\leq 0}\}^\uparrow)$ implies that $E$ is $\Phi$-WIT$_1$ and $c_1(E)\cdot D^2 = 0$.

Consider any short exact sequence in $\coh (X)$
\[
0 \to M \to \wh{E} \to N \to 0
\]
where $M, N \neq 0$.  Then $M, N$ both lie in $\{\coh^{\leq 0}\}^\uparrow$ and are both $\Phi$-WIT$_0$.  This short exact sequence is taken by $\Phi$ to the short exact sequence in $\coh (X)$
\[
0 \to \wh{M} \to E \to \wh{N} \to 0
\]
in which all the terms satisfy $c_1(-)\cdot D^2=0$.  Since $E$ is torsion-free and $\wh{M}\neq 0$, we have $\rk(\wh{M})>0$.  We also have  $\mu_f (\wh{M})=0=\mu_f (E)$.  The $\mu_\omega$-semistability of $E$ for $n \gg 0$ now implies
\[
\mu_\ast (M) = \mu^\ast (\wh{M})\leq \mu^\ast (E) = \mu_\ast (\wh{E})
\]
and so $\wh{E}$ is $\mu_\ast$-semistable, and lies in the category on the left.
\end{proof}


\begin{remark}\label{0602-remark4}
From \eqref{0602-eq7}, we have the following relations for coherent sheaves $F$ on $X$ supported in dimension 2 (here $\omega := H + nD$):
\begin{itemize}
\item if $F$ is $\mu_\omega$-semistable for $n \gg 0$, then $F$ is $\mu_\ast$-semistable;
\item if $F$ is $\mu_\ast$-stable, then $F$ is $\mu_\omega$-stable for $n \gg 0$.
\end{itemize}
\end{remark}

\appendix
\section{Preservation of semistability on elliptic surfaces}\label{sec-appendix}

In this section, we consider the trivial elliptic surface $X = C \times T$ where $C$ is an elliptic curve, and $T$ is an arbitrary smooth projective curve.  We regard $X$ as an elliptic surface via the second projection $\pi : X = C \times T \to T$.  Using the techniques we have developed, we recover Yoshioka's result on preservation on Gieseker semistabiilty (i.e.\ \cite[Theorem 3.15]{YosAS}; see also \cite[Proposition 3.4.5]{YosPII}), in the case of the trivial elliptic surface $X$.

As a reminder, we consider the Fourier-Mukai transform $\Phi : X \to X$ as defined in Section \ref{sec-FMTcohom}, so that if $E \in D(X)$ has $\ch(E) = \begin{pmatrix} a_{00} & a_{01} \\ a_{10} & a_{11} \end{pmatrix}$, then we have $\ch(\Phi (E)) = \begin{pmatrix} a_{10} & a_{11} \\ -a_{00} & -a_{01} \end{pmatrix}$.

\subsection{Semistability with respect to fiber-like polarisations}

We use $f$ for the class of a fiber $C \times \{ \text{point} \}$ and $h$ for the class of a horizontal section $\{ \text{point} \} \times T$ of the fibration $\pi$.
Consider  the polarisation $\omega = th + sf$ where $t,s >0$. Given an object $E \in D(X)$, the Hilbert polynomial of $E$ with respect to the polarisation $\omega$ is 
\[
  P_\omega (E,m) = m^2 a_{00} ts + m(a_{01}t + a_{10}s) + a_{11}.
\]
Setting $t=1$, we can arrange the coefficients of $P_\omega (E,m)$  in an array as follows:

\begin{center}
\begin{tabular}{c | c c }
 & $s$ & $s^0$ \\
 \hline
 $m^2$ & $a_{00}$ &  \\
 $m$ & $a_{10}$ & $a_{01}$ \\
 $m^0$ & & $a_{11}$
\end{tabular}
\end{center}


From this table, we see that in determining the  semistabilty of a coherent sheaf $E$ on $X$ with respect to $\omega$ for $s \gg 0$, we can simply  compare the following quantities, in the listed order:
\begin{itemize}
\item when $a_{00} \neq 0$, i.e.\ when $E$ is supported in dimension 2:
\begin{equation}\label{seq2-2}
  \frac{a_{10}}{a_{00}} \to \frac{a_{01}}{a_{00}} \to \frac{a_{11}}{a_{00}}.
\end{equation}
(Note that $\frac{a_{10}}{a_{00}}, \frac{a_{01}}{a_{00}}$ are constant multiples of $\mu_f(E), \mu^\ast (E)$, respectively.)
\item when $a_{00} =0$, but $a_{10} \neq 0$ or $a_{01} \neq 0$, i.e.\ when $E$ is supported in dimension 1:
\begin{equation}\label{seq2-1}
  \frac{a_{11}}{a_{10}} \to \frac{a_{11}}{a_{01}}.
\end{equation}
\end{itemize}

This can be stated more precisely in the following manner:

\begin{lemma}\cite[Lemma 3.4.1]{YosPII}\label{lemma20}
\begin{itemize}
\item[(1)] Let $E$ be a torsion-free coherent sheaf on $X$.  Then $E$ is $\omega$-semistable for $s \gg 0$ iff for every proper subsheaf $E'$ of $E$, one of the following conditions holds:
    \begin{itemize}
    \item[(a)]
    \begin{equation}\label{eq-Yos3.31}
    \frac{c_1(E)\cdot f}{\rank (E)} > \frac{c_1(E')\cdot f}{\rank E'},
    \end{equation}
    \item[(b)]
    \begin{equation}\label{eq-Yos3.32}
    \frac{c_1(E)\cdot f}{\rank E} = \frac{c_1 (E')\cdot f}{\rank E'}, \frac{c_1(E)\cdot h}{\rank E} > \frac{c_1(E')\cdot h}{\rank E'},
    \end{equation}
    \item[(c)]
    \begin{equation}\label{eq-Yos3.33}
    \frac{c_1(E)\cdot f}{\rank E} = \frac{c_1(E')\cdot f}{\rank E'}, \frac{c_1(E)\cdot h}{\rank E} = \frac{c_1(E')\cdot h}{\rank E'}, \frac{\chi (E)}{\rank E} \geq \frac{\chi (E')}{\rank E'}.
    \end{equation}
    \end{itemize}
\item[(2)] Let $F$ be a 1-dimensional coherent sheaf on $Y$ with $c_1(F)\cdot f \neq 0$.  Then $F$ is $\omega$-semistable for $s \gg 0$ iff for every proper subsheaf $F'$ of $F$, one of the following conditions holds:
    \begin{itemize}
    \item[(a)]
    \begin{equation}\label{eq-Yos3.34}
    c_1(F')\cdot f \cdot \frac{\chi (F)}{c_1(F)\cdot f} > \chi (F') ,
    \end{equation}
    \item[(b)]
    \begin{equation}\label{eq-Yos3.35}
    c_1(F')\cdot f \cdot \frac{\chi (F)}{c_1(F)\cdot f} = \chi (F'), c_1(F')\cdot h \cdot \frac{\chi (F)}{c_1(F)\cdot h} \geq \chi (F').
    \end{equation}
    \end{itemize}
\end{itemize}
\end{lemma}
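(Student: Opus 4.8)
The plan is to reduce the statement to the comparison of reduced Hilbert polynomials and then read off the lexicographic structure that emerges as $s \to \infty$. Recall that $E$ is $\omega$-Gieseker semistable precisely when $p_\omega(E') \le p_\omega(E)$ (compared for $m \gg 0$) for every proper saturated subsheaf $E' \subset E$, where $p_\omega$ is the reduced Hilbert polynomial and $\omega = h + sf$ (so $t=1$). Dividing $P_\omega(E,m) = m^2 a_{00}s + m(a_{01}+a_{10}s) + a_{11}$ by its leading coefficient $a_{00}s$, I would write in the torsion-free case
\[
p_\omega(E,m) = m^2 + m\left(\frac{a_{10}}{a_{00}} + \frac{1}{s}\cdot\frac{a_{01}}{a_{00}}\right) + \frac{1}{s}\cdot\frac{a_{11}}{a_{00}},
\]
so that every coefficient is analytic in $1/s$. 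As already noted below \eqref{seq2-2}, $\tfrac{a_{10}}{a_{00}}$ and $\tfrac{a_{01}}{a_{00}}$ are constant multiples of $\mu_f(E)$ and $\mu^\ast(E)$, and the constant term recovers $\chi(E)/\rank(E)$. Hence the inequality $p_\omega(E') \le p_\omega(E)$ unravels, as $s\to\infty$, into the lexicographic comparison of the triple $(\mu_f,\mu^\ast,\chi/\rank)$ that is displayed in \eqref{seq2-2}, matching exactly the trichotomy (a)/(b)/(c).

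For the forward implication I would argue by contraposition. If a subsheaf $E'$ fails all of (a), (b), (c), then either $\mu_f(E') > \mu_f(E)$, or $\mu_f(E')=\mu_f(E)$ with $\mu^\ast(E')>\mu^\ast(E)$, or equality in both together with $\chi(E')/\rank(E') > \chi(E)/\rank(E)$. In the first case the $m$-coefficient of $p_\omega(E')$ strictly exceeds that of $p_\omega(E)$ once $s$ is large enough that the $O(1/s)$ correction cannot reverse the leading sign; in the second case the leading terms of the $m$-coefficients agree and the $1/s$-terms differ with the destabilising sign for every $s>0$; in the third case the $m$-coefficients coincide for all $s$ and the normalised constant terms decide. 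In each case $E'$ destabilises $E$ for all sufficiently large $s$, contradicting semistability. Since a single bad $E'$ suffices, no uniformity is required and this direction is essentially formal.

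The reverse implication carries the real content, since ``$\omega$-semistable for $s\gg 0$'' demands one threshold $s_0$ valid simultaneously for all proper subsheaves, whereas conditions (a)/(b)/(c) only furnish, for each $E'$ separately, a bound on $s$ beyond which $p_\omega(E') \le p_\omega(E)$ — and that bound a priori grows with the size of the $\mu^\ast$- or $\chi$-contribution of $E'$. The main obstacle is therefore a uniform boundedness statement: I would show that the saturated subsheaves $E' \subset E$ that satisfy $p_\omega(E') \ge p_\omega(E)$ for arbitrarily large $s$ form a bounded family, so that only finitely many Chern characters $(a_{00}(E'),a_{10}(E'),a_{01}(E'))$ occur among the potential first- and second-level destabilisers. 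Invoking Grothendieck's boundedness together with the relative Harder-Narasimhan theory (\cite[Theorem 2.3.2]{HL10} and the $\mu_f$-formalism of Section \ref{sec-slf}) to control these finitely many numerical types, I would reduce the infinitely many comparisons to finitely many and take the maximum of the corresponding thresholds to obtain a single $s_0$.

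Finally, for part (2) I would run the identical argument with the degree-one reduced polynomial
\[
p_\omega(F,m) = m + \frac{\chi(F)}{a_{01}+a_{10}s},
\]
where the hypothesis $c_1(F)\cdot f = a_{10} > 0$ guarantees that the leading coefficient $a_{01}+a_{10}s$ grows with $s$. Expanding in $1/s$ and cross-multiplying turns the comparison into the lexicographic pair of \eqref{seq2-1}, namely first $\chi/(c_1\cdot f)$ and then $\chi/(c_1\cdot h)$, which matches \eqref{eq-Yos3.34}--\eqref{eq-Yos3.35}; the same boundedness input now controls the one-dimensional subsheaves. As usual, the bookkeeping of strict versus non-strict inequalities is forced by the requirement that all but the final comparison be strict for semistability as $s \gg 0$, while the last may be an equality.
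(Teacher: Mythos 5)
Your first paragraph and your forward implication are fine, but note that the paper itself offers no proof of this lemma at all: it is quoted verbatim from Yoshioka \cite[Lemma 3.4.1]{YosPII}, and the coefficient table before the statement (which your first paragraph essentially reproduces) is only motivation. So the entire burden of your proposal falls on the reverse implication, and that is where there is a genuine gap: the uniform boundedness statement you propose to prove is the wrong one. Writing the $m$-coefficient of $p_\omega(E')$ as $\mu_f(E') + \mu^\ast(E')/s$ (up to positive constants), one checks that under the hypothesis that every proper subsheaf satisfies (a), (b) or (c), the only subsheaves with $p_\omega(E') \geq p_\omega(E)$ for arbitrarily large $s$ are those realising equality throughout \eqref{eq-Yos3.33}, and for these $p_\omega(E') = p_\omega(E)$ for \emph{every} $s$; such subsheaves never violate semistability, so boundedness of that family yields nothing. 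The actual threat comes from subsheaves in case (a) with $\mu^\ast(E') > \mu^\ast(E)$: each such $E'$ destabilises $E$ for all $s$ below, up to a positive constant, $\left(\mu^\ast(E')-\mu^\ast(E)\right)/\left(\mu_f(E)-\mu_f(E')\right)$, and the lemma is true only because these thresholds admit a uniform upper bound. Your contrapositive scheme (bounded family $\Rightarrow$ finitely many Chern characters $\Rightarrow$ maximum of thresholds) does not engage with this family, and indeed the family of subsheaves destabilising at \emph{some} large $s$ can genuinely be unbounded when the hypothesis fails, so no hypothesis-free boundedness statement can do the work.

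What a correct argument needs, and what you never establish, are two inputs: (i) a uniform upper bound on $\mu^\ast(E') = c_1(E')\cdot h/\rank(E')$ over all saturated subsheaves $E' \subseteq E$, and (ii) the quantization $\mu_f(E)-\mu_f(E') \geq 1/\rank(E)^2$ whenever this difference is positive, which holds because $c_1(\cdot)\cdot f$ is an integer. Point (ii) is trivial, but point (i) is not delivered by the phrase ``Grothendieck's boundedness'': the standard Grothendieck lemma bounds slopes of subsheaves with respect to an \emph{ample} class, whereas $h$ is only nef ($h^2 = 0$), and the naive reduction $c_1(E')\cdot h = c_1(E')\cdot(h+f) - c_1(E')\cdot f$ fails because $c_1(E')\cdot f$ is unbounded below over subsheaves (consider $\cO_X(-nh) \subset \cO_X$). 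One needs an argument exploiting the fibred structure, e.g.\ restricting a saturated $E' \subseteq E$ to a generic horizontal section $\{c\}\times T$ and bounding degrees of subsheaves of the bounded family $E|_{\{c\}\times T}$ --- that is, relative Harder--Narasimhan theory for the \emph{other} projection $\pi_C : X \to C$, not the $\mu_f$-formalism of Section \ref{sec-slf}, which concerns $\pi : X \to T$ and controls the wrong intersection numbers. Until (i) is proved, ``take the maximum of the corresponding thresholds'' is precisely the unproved content of the lemma, i.e.\ exactly what the paper delegates to Yoshioka; the same step is also asserted rather than proved in your part (2), though there it is easier, since $c_1(F')$ ranges over finitely many effective classes and $\chi(F') \leq h^0(F)$.
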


\subsection{Preservation of Gieseker stability on elliptic surfaces (Yoshioka)}

In this section, we prove the following:

\begin{theorem}\label{theorem3}
Let $X = C\times T$ where $C$ is an elliptic curve and $T$ a smooth projective curve.  Consider $X$ as an elliptic surface via the second projection $\pi : X = C \times T \to T$, and let $\Phi : D(X) \overset{\thicksim}{\to} D(X)$ be as above.  Let $\ch$ be a fixed Chern character such that it is the Chern character of a 1-dimensional sheaf, with $c_1 \cdot f>0$ and $\chi=\ch_2>0$.  Then we have an equivalence of categories
\begin{multline*}
  \{F \in \Coh (X) : \ch(F) = \ch, F \text{ is $\omega$-semistable for $s \gg 0$} \} \overset{\Phi}{\to} \\
  \{ E \in \Coh (X): \ch(E)=\Phi^H(\ch), E \text{ is $\omega$-semistable for $s \gg 0$}\}.
\end{multline*}
\end{theorem}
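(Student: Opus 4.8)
The plan is to mirror the proof of Theorem \ref{0602-theorem1}, splitting the claimed equivalence into a ``forward'' statement (a $1$-dimensional $\omega$-semistable $F$ with $\ch(F)=\ch$ is $\Phi$-WIT$_0$ and $\wh F$ is $\omega$-semistable for $s\gg0$) and its ``backward'' converse (a $2$-dimensional $\omega$-semistable $E$ with $\ch(E)=\Phi^H(\ch)$ is $\Phi$-WIT$_1$ and $\wh E$ is $\omega$-semistable for $s\gg0$), and then assembling the two into a quasi-inverse pair using that $\Phi$ is an equivalence of derived categories with $\Phi\circ\Phi=\iota^\ast[-1]$ (Lemma \ref{lem:FM_curve}(2), cf.\ Proposition \ref{prop:FM_product}). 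Throughout, the criterion for $\omega$-semistability with $s\gg0$ will be Lemma \ref{lemma20}, in its torsion-free form (1) on the $2$-dimensional side and its $1$-dimensional form (2) on the $1$-dimensional side; the engine is to track how the ordered invariants $\mu_f$, the section-slope $c_1(-)\cdot h/\rank$, and $\chi$ permute and change sign under $\Phi^H$, exactly as in Lemma \ref{0602-lemma2}.

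The first step is to establish the WIT properties. On the backward side this is immediate: $\omega$-semistability for $s\gg0$ forces $E$ torsion-free and $\mu_f$-semistable, and since $\ch(E)=\Phi^H(\ch)$ gives $c_1(E)\cdot f=0$, hence $\mu_f(E)=0$, Lemma \ref{lemma10} yields that $E$ is $\Phi$-WIT$_1$ with $\wh E$ a $1$-dimensional sheaf. On the forward side $F$ is torsion, so Lemma \ref{lemma10} does not apply and I would argue directly: if $\Phi^1F\neq0$, then the spectral sequence for $\Phi\circ\Phi=\iota^\ast[-1]$ shows $\Phi^0F$ is $\Phi$-WIT$_1$ and $G:=\Phi^1F$ is $\Phi$-WIT$_0$, and produces a short exact sequence $0\to\wh{\Phi^0F}\to\iota^\ast F\to\wh G\to0$. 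Since $c_1(F)\cdot f>0$, the restriction of $F$ to a general fiber is $0$-dimensional, hence $\Phi$-WIT$_0$, so $\Phi^0F\neq0$ and $\wh G$ is a \emph{proper} quotient; because $G$ is a fiber sheaf, $\wh G$ is a fiber sheaf of negative fiber-slope, so its reduced Hilbert polynomial lies strictly below that of $F$, contradicting the semistability of $F$ (note $\iota^\ast$ is a polarisation-preserving automorphism, so $\iota^\ast F$ is semistable iff $F$ is). Hence $\Phi^1F=0$, i.e.\ $F$ is $\Phi$-WIT$_0$, and then $\wh F$ is a pure $2$-dimensional (so torsion-free) sheaf of positive rank $c_1(F)\cdot f$ with $c_1(\wh F)\cdot f=0$.

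With WIT in hand, the core of each direction is a subobject analysis. For the forward direction, take $0\to A\to\wh F\to B\to0$; since $\wh F$ is $\Phi$-WIT$_1$ its subsheaf $A$ is $\Phi$-WIT$_1$, and applying $\Phi$ gives the four-term sequence $0\to\Phi^0B\to\wh A\overset{\alpha}{\to}F\to\Phi^1B\to0$ (cf.\ \eqref{0602-eq12}). One then verifies the Yoshioka conditions of Lemma \ref{lemma20}(1) in order: $c_1(A)\cdot f\leq0$ by Remark \ref{remark1}, so condition (a) holds unless $c_1(A)\cdot f=0$; in the equality case $\Phi^0B$ is a $\Phi$-WIT$_1$ fiber sheaf, and comparing section-slopes using the $1$-dimensional semistability of $F$ via Lemma \ref{lemma20}(2) yields condition (b); in the next equality case comparing Euler characteristics yields condition (c). The backward direction is symmetric, dualising subsheaves to quotients and using Lemma \ref{lemma20}(1) for $E$ to verify Lemma \ref{lemma20}(2) for $\wh E$.

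I expect the main obstacle to be precisely this ordered bookkeeping. Because $\Phi^H$ mixes the fiber and section directions (as noted after Corollary \ref{cor:slope}), the correspondence between the list $(\mu_f,\,c_1(-)\cdot h/\rank,\,\chi)$ on the torsion-free side and the two-term list governing Lemma \ref{lemma20}(2) on the $1$-dimensional side is not a mere relabelling, and one must show that the ``error'' contributions of the auxiliary fiber sheaves $\Phi^0B$ (and their duals) always move each successive inequality in the semistability-preserving direction. Establishing these sign estimates, via Remark \ref{remark1} and the positivity already recorded for fiber sheaves, is the delicate part; once they are in place the two lemmas combine formally, since $\Phi$ and its quasi-inverse supplied by $\Phi\circ\Phi=\iota^\ast[-1]$ carry each of the two subcategories into the other and back, giving the asserted equivalence of categories.
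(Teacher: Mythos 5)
Your overall strategy is the same as the paper's: the paper also splits Theorem \ref{theorem3} into a forward half and a backward half (its Lemmas \ref{pro4} and \ref{pro3}), gets the WIT property on the torsion-free side from $\mu_f$-semistability via Lemma \ref{lemma10}, gets $\Phi$-WIT$_0$ on the $1$-dimensional side by showing the WIT$_1$ part would destabilise, and then runs the subobject analysis through the four-term sequence $0\to\Phi^0B\to\wh A\overset{\alpha}{\to}F\to\Phi^1B\to 0$ against Yoshioka's criterion (Lemma \ref{lemma20}) in exactly the lexicographic order you describe. Your use of $\Phi\circ\Phi=\iota^\ast[-1]$ to produce $0\to\wh{\Phi^0F}\to\iota^\ast F\to\wh{\Phi^1F}\to0$ is equivalent to the paper's use of the torsion pair $0\to F_0\to F\to F_1\to 0$, since $\Phi^0F\cong\wh{F_0}$ and $\Phi^1F\cong\wh{F_1}$.

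There is, however, one genuine gap in the forward half. After proving $F$ is $\Phi$-WIT$_0$ you write that ``then $\wh F$ is a pure $2$-dimensional (so torsion-free) sheaf'', as if purity of $\wh F$ followed from WIT$_0$-ness. It does not: a line bundle of positive degree on a single fiber is pure and $\Phi$-WIT$_0$, yet its transform is a torsion (fiber) sheaf, so a pure $\Phi$-WIT$_0$ sheaf can perfectly well have a transform with torsion. Torsion-freeness of $\wh F$ is precisely a place where semistability of $F$ must be used again, and the paper devotes a separate step of Lemma \ref{pro4} to it: a torsion subsheaf $A\subseteq\wh F$ is $\Phi$-WIT$_1$ and torsion, hence a fiber sheaf by \cite[Lemma 2.6]{Lo7}, and transforming the inclusion back yields a $\Phi$-WIT$_0$ fiber subsheaf of $F$ with $\chi>0$, violating \eqref{eq-Yos3.34}. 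This step is not cosmetic: Lemma \ref{lemma20}(1) is a criterion for \emph{torsion-free} sheaves, and the later verification of conditions (b) and (c) invokes torsion-freeness of $\wh F$ once more (to rule out $c_1(\image\alpha)\cdot f=0$). Two smaller points. First, your claim that $\wh{\Phi^1F}$ has \emph{negative} fiber-slope is not quite right: $\wh{\Phi^1F}\cong\iota^\ast F_1$ can have $\chi=0$ (e.g.\ $F_1=\cO_{C\times\{t\}}$), and in that case the strict inequality of reduced Hilbert polynomials comes from the hypothesis $\chi(F)>0$, which should be said explicitly. Second, the assertion that $\Phi^1F$ is a fiber sheaf is true but needs the identification $\Phi^1F\cong\wh{F_1}$ together with the fact that a $\Phi$-WIT$_1$ torsion sheaf is a fiber sheaf; the spectral sequence alone does not give it. Finally, be aware that the ``ordered bookkeeping'' you defer is where most of the paper's actual work lies (the chains of inequalities \eqref{eq49}--\eqref{eq56} in Lemma \ref{pro3} and \eqref{eq58}--\eqref{eq57} in Lemma \ref{pro4}); your predicted sign estimates, driven by Remark \ref{remark1} and the fiber-sheaf positivity, are indeed exactly what makes them close.
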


We divide the proof of Theorem \ref{theorem3} into Lemmas \ref{pro3} and \ref{pro4}.

\begin{lemma}\label{pro3}
Suppose $E \in \Coh (X)$  satisfies:
\begin{itemize}
\item[(a)] $E$ is $\omega$-semistable  for $s \gg 0$;
\item[(b)] $\ch(\Phi (E)[1])$ is the Chern character of some coherent sheaf supported in dimension 1, with $\chi (\Phi (E)[1]) > 0$.
\end{itemize}
Then $E$ lies in $\mathcal F_X \cap \Phi (W_{0,X} \cap \Coh^{\leq 1}(X))$ and is  $\Phi$-WIT$_1$, and $\widehat{E}$ is $\omega$-semistable  for $s \gg 0$.  Also, we have $c_1(\widehat{E})\cdot f >0$.
\end{lemma}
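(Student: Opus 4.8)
The plan is to run, in the surface setting with the fibre-like polarisation $\omega=th+sf$, the analogue of the argument used for Lemma~\ref{0602-lemma3} (and the Gieseker-level bookkeeping of Lemma~\ref{0602-lemma2}): first pin down the homological type of $E$ from the two hypotheses, then transfer semistability across $\Phi$ by chasing a short exact sequence and comparing the invariants in the lexicographic order dictated by \eqref{seq2-2} and \eqref{seq2-1}.

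I would start with the Chern-character bookkeeping. Writing $\ch(E)=\left(\begin{smallmatrix} a_{00} & a_{01}\\ a_{10} & a_{11}\end{smallmatrix}\right)$, the formula $\ch(\Phi(E))=\left(\begin{smallmatrix} a_{10} & a_{11}\\ -a_{00} & -a_{01}\end{smallmatrix}\right)$ gives $\ch(\Phi(E)[1])=\left(\begin{smallmatrix} -a_{10} & -a_{11}\\ a_{00} & a_{01}\end{smallmatrix}\right)$. Since by (b) this is the class of a $1$-dimensional sheaf, its rank $-a_{10}$ vanishes, so $c_1(E)\cdot f=a_{10}=0$; moreover the fibre degree of $\wh E$ equals $a_{00}=\rk(E)$. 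Next, $\omega$-semistability for $s\gg0$ forces $E$ to be $\mu_f$-semistable, since the leading ($m^2$) entry of \eqref{seq2-2} is $a_{10}/a_{00}=\mu_f$. I then want $E$ torsion-free of positive rank: once $\rk(E)>0$ the value $\mu_f(E)=0$ is finite, so $E$ can carry no rank-zero subsheaf (any such has $\mu_f=+\infty$), i.e. $E\in\mathcal F_X$. With $E$ torsion-free and $\mu_f(E)=0$, the elliptic-surface case of Lemma~\ref{lemma10} delivers at once $E\in\mathcal F_X\cap\Phi(W_{0,X}\cap\Coh^{\leq1}(X))$, that $E$ is $\Phi$-WIT$_1$, and that $\wh E=\Phi(E)[1]$ is a pure $1$-dimensional sheaf; this settles every conclusion except the $\omega$-semistability of $\wh E$, while $c_1(\wh E)\cdot f=\rk(E)>0$ is the last assertion.

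For the semistability of $\wh E$, note first that applying $\Phi$ to $\Phi(E)=\wh E[-1]$ and using $\Phi\circ\Phi\cong\iota^\ast[-1]$ (Lemma~\ref{lem:FM_curve}(2), in its relative form) gives $\Phi(\wh E)=\iota^\ast E$, so $\wh E$ is $\Phi$-WIT$_0$ with $\widehat{\wh E}=\iota^\ast E$. I would then take an arbitrary short exact sequence $0\to M\to\wh E\to N\to 0$ in $\Coh(X)$; applying $\Phi$ yields, exactly as in \eqref{0602-eq12}, a four-term exact sequence $0\to\Phi^0M\to\iota^\ast E\to\Phi^0N\to\Phi^1M\to0$ in which $\Phi^0M$ is a subsheaf of $\iota^\ast E$. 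By Lemma~\ref{lemma20}(2), verifying $\omega$-semistability of $\wh E$ for $s\gg0$ amounts to checking the lexicographic comparison \eqref{seq2-1} for each $M$: first the fibre-normalised Euler characteristic, then, in case of equality, the $h$-normalised one. Under $\Phi$ these correspond, via the matrix formula, to the successive entries $\mu^\ast,\ \chi/\rk$ of $\eqref{seq2-2}$ for $\iota^\ast E$, so each inequality for $\wh E$ translates into one for $E$, which holds by the $\omega$-semistability of $E$ for $s\gg0$. The $\Phi$-WIT$_1$ error term $\Phi^0N$ is torsion, hence a fibre sheaf by \cite[Lemma 2.6]{Lo7}, so its fibre degree vanishes and (by the surface analogue of Remark~\ref{remark1}, together with Proposition~\ref{prop:summary}) its contribution to each invariant carries a definite sign—precisely what is needed to push the inequalities the right way at every stage.

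The main obstacle is this last bookkeeping: controlling the vanishing and the sign of the WIT-error contributions through each successive coordinate of the lexicographic vector, so that equality at one level transfers cleanly to the comparison at the next (mirroring the ``assume equality, pass to the next invariant'' chain of reductions in the proof of Lemma~\ref{0602-lemma2}). A secondary but genuinely delicate point is the positivity $c_1(\wh E)\cdot f>0$, equivalently $\rk(E)>0$: ruling out the possibility that $E$ degenerates to a vertical fibre sheaf requires using the effectivity constraints of Proposition~\ref{prop:summary} alongside the $1$-dimensionality and $\chi>0$ of $\wh E$ from hypothesis (b), and this is where the fixed Chern character of Theorem~\ref{theorem3} (whose $c_1\cdot f>0$ forces $\rk=c_1\cdot f>0$) is really being used.
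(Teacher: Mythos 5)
Your first half is sound and follows the paper's own route: hypothesis (b) forces $c_1(E)\cdot f=0$, Gieseker semistability for $s\gg 0$ gives $\mu_f$-semistability and purity, and Lemma \ref{lemma10} (the case $\mu_f=0$) then yields $E\in\mathcal F_X\cap\Phi(W_{0,X}\cap\Coh^{\leq 1}(X))$, the $\Phi$-WIT$_1$ property, and $c_1(\wh E)\cdot f=\rk(E)$. Two remarks there: purity of $\wh E$ is not part of Lemma \ref{lemma10} and needs the separate (one-line) argument that a $0$-dimensional subsheaf of $\wh E$ would transform to a fiber subsheaf of the torsion-free sheaf $E$; and your point that $\rk(E)>0$ cannot be extracted from (a)+(b) alone is well taken — the pushforward of a negative-degree line bundle on a single fiber satisfies both hypotheses and is torsion — so this genuinely leans on the context of Theorem \ref{theorem3}, a point the paper itself glosses over.

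The genuine gap is in the only part carrying real content, the $\omega$-semistability of $\wh E$, and the one concrete claim you make there is false. In the sequence $0\to\Phi^0M\to\iota^\ast E\to\Phi^0N\to\Phi^1M\to 0$, the sheaf $\Phi^0N$ is indeed $\Phi$-WIT$_1$, but it is \emph{not} torsion: $N$ is a quotient of the $\Phi$-WIT$_0$ sheaf $\wh E$, hence itself $\Phi$-WIT$_0$ with $\Phi^0N=\wh N$ of rank $c_1(N)\cdot f$, which is positive for most quotients (e.g.\ if $M$ is a fiber subsheaf then $\rk(\Phi^0N)=\rk(E)>0$). The fiber-sheaf error term is $\Phi^1M$: the $\Phi$-WIT$_1$ part $M_1$ in $0\to M_0\to M\to M_1\to 0$ is torsion and $\Phi$-WIT$_1$, hence a fiber sheaf by \cite[Lemma 2.6]{Lo7}, and $\Phi^1M=\wh{M_1}$. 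The entire difficulty of the lemma — which you defer as ``the main obstacle'' — is exactly the management of this term: one needs \eqref{eq49} (that $c_1(\Phi M)\cdot h\leq c_1(\Phi^0M)\cdot h$, with equality iff $\Phi^1M$ is $0$-dimensional), the separate case $\Phi^0M=0$ (where nothing transfers to $E$ and one argues directly from $\chi(M)\leq 0<\chi(\wh E)$), and, in the final equality case, the semistability of $E$ at the third, $\chi/\rk$, level of \eqref{seq2-2} applied to $\Phi^0M$, combined with $c_1(M)\cdot f=c_1(M_0)\cdot f$ and $c_1(M)\cdot h\geq c_1(M_0)\cdot h$, to reach \eqref{eq52}; the translation between the second coordinates of \eqref{seq2-1} and \eqref{seq2-2} is moreover not an identity of invariants but an equivalence of comparisons that uses $\chi(\wh E)>0$. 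As written, your outline would have you pushing inequalities through an object ($\Phi^0N$) whose invariants have no definite sign, so the main assertion of the lemma is stated rather than proved.
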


Note that, since the Chern character is additive on exact triangles, we can extend the definitions of the functions $\mu_f$ and $\mu^\ast$ so that they are defined on all of $D^b(X)$.

\begin{proof}
Take any $E \in \Coh (X)$ satisfying properties (a) and (b).  Note that, for any 1-dimensional sheaf $T$ on $X$, the restriction $T|_s$ is a 0-dimensional sheaf for a general $s \in C$, and so $\mu_f (\Phi (T))=0$.  Thus property (b) implies that $\mu_f (E)=0$.  By \eqref{seq2-2}, that $E$ is  semistable with respect to $\omega$ for $s \gg 0$  implies that it is $\mu_f$-semistable (and torsion-free).  Therefore, by Lemma \ref{lemma10}, we have $E \in \mathcal F_X \cap \Phi (W_{0,X} \cap \Coh^{\leq 1}(X))$.  In particular, we know that $E$ is $\Phi$-WIT$_1$.  Since $\widehat{E}$ is supported in dimension 1 but is not a fiber sheaf, we have $c_1(\widehat{E})\cdot f >0$.  Besides, $\wh{E}$ is necessarily pure 1-dimensional, for any 0-dimensional subsheaf $E_0 \subseteq \wh{E}$ would be taken by $\Phi$ to a fiber subsheaf of $E$, contradicting $E$ being torsion-free.

Now, take any nonzero proper subsheaf $F \subset \wh{E}$.  Then $F$ is also pure 1-dimensional.  The short exact sequence in $\Coh (Y)$
\[
0 \to F \to \wh{E} \to \wh{E}/F \to 0
\]
yields the exact sequence of cohomology in $\Coh (X)$
\[
  0 \to \Phi^0 F \to E \overset{\alpha}{\to} M \to \Phi^1 F \to 0
\]
where $M := \wh{(\wh{E}/F)}$.

If $\Phi^0 F=0$, then $F$ is $\Phi$-WIT$_1$, and is necessarily a fiber sheaf \cite[Lemma 2.6]{Lo7}.  Thus $c_1(F)\cdot f=0$, $c_1(F) \cdot h >0$, and $\chi (F) \leq 0$.  Then  the assumption that $\chi (\wh{E})>0$ ensures $F$ does not destabilise $\wh{E}$ for $s \gg 0$.  From now on, let us suppose $\Phi^0 F\neq 0$.  Then $\Phi^0 F$ is torsion-free.




Since $F$ is a 1-dimensional sheaf, we have $c_1 (\Phi^0 F)\cdot f =0$.  We also have $c_1 (E)\cdot f =0$, and so the semistability of $E$ with respect to $\omega$ for $s \gg 0$ gives us $\mu^\ast (\Phi^0 F) \leq \mu^\ast (E)$.  On the other hand, since the $\Phi$-WIT$_1$ part of $F$ is a fiber sheaf \cite[Lemma 2.6]{Lo7}, we see that $\Phi^1 F$ is a fiber sheaf, and so $\rank (\Phi^1 F)=0$.  Hence
\begin{align}
  c_1 (\Phi F) \cdot h &= c_1 (\Phi^0 F)\cdot h - c_1(\Phi^1 F)\cdot h \notag \\
  &\leq c_1 (\Phi^0 F) \cdot h, \label{eq49}
\end{align}
with equality iff $\Phi^1 F$ is 0-dimensional.

Suppose we have strict inequality in \eqref{eq49}.  Then, noting that $\rank (\Phi F)= \rank (\Phi^0 F)$ since $\Phi^1 F$ is a fiber sheaf, we have
\begin{align*}
  \mu^\ast (\Phi F) = \frac{c_1(\Phi F)}{\rank (\Phi F)}   < \frac{c_1(\Phi^0F)\cdot h}{\rank (\Phi^0 F)}   &= \mu^\ast (\Phi^0 F) \\
  &< \mu^\ast (E).
\end{align*}
On the other hand, from the formula for the cohomological Fourier-Mukai transform in Proposition \ref{prop:product_change_class}, we have
\[
  c_1(\wh{E})\cdot f = \rank (E) \text{\quad and \quad} \chi (\wh{E}) = c_1(E) \cdot h.
\]
It follows that
\[
  \frac{\chi (\wh{E})}{c_1(\wh{E})\cdot f} = \mu^\ast (E),
\]
and similarly for $F$.  Thus $\mu^\ast (\Phi F) < \mu^\ast (E)$ is equivalent to $\frac{\chi (F)}{c_1(F)\cdot f} < \frac{\chi (\wh{E})}{c_1(\wh{E})\cdot f}$, and so  $F$ does not destabilise $\wh{E}$ with respect to $\omega$ for $s \gg 0$.

To finish the proof, consider the case when we have equality in \eqref{eq49}.  In this case, $\Phi^1 F$ is 0-dimensional, and $c_1 (\Phi F)\cdot h = c_1 (\Phi^0 F) \cdot h$.  It follows that
\begin{equation}\label{eq50}
  \mu^\ast (\Phi F) = \mu^\ast (\Phi^0 F) \leq \mu^\ast (E),
\end{equation}
where the second inequality is from above.  The inequality \eqref{eq50} corresponds to
\begin{equation}\label{eq51}
  \frac{\chi (F)}{c_1(F)\cdot f} \leq \frac{\chi (\wh{E})}{c_1 (\wh{E})\cdot f}.
\end{equation}
If we have strict inequality in \eqref{eq51}, then we are done.  So suppose we have equality in \eqref{eq51}.  Our aim now is to show
\begin{equation}\label{eq52}
  c_1(F) \cdot h \cdot \frac{\chi (\wh{E})}{c_1(\wh{E})\cdot h} \geq \chi (F).
\end{equation}
Since $c_1(F) \cdot h \geq 0$,  we can assume $c_1(\wh{E})\cdot h >0$.

We now have
\begin{align}
   c_1(F) \cdot h  \cdot \frac{\chi (\wh{E})}{c_1(\wh{E})\cdot h}  &=  c_1(F) \cdot h \cdot \frac{\chi (\wh{E})}{c_1(\wh{E})\cdot f} \cdot \frac{ c_1(\wh{E})\cdot f}{c_1(\wh{E})\cdot h}  \notag\\
  &=  c_1(F) \cdot h \cdot \frac{\chi(F)}{c_1(F)\cdot f} \cdot \frac{c_1(\wh{E})\cdot f}{c_1(\wh{E})\cdot h} \text{ since we have equality in \eqref{eq51}}. \label{eq53}
\end{align}
Note that, if we consider the short exact sequence $0 \to F_0 \to F \to F_1 \to 0$ in $\Coh(Y)$ where $F_i$ is $\Phi$-WIT$_i$, then
\begin{align}
  c_1(F)\cdot f &= c_1(F_0)\cdot f \text{ and} \label{eq55}\\
  c_1(F) \cdot h &= c_1 (F_0)\cdot h + c_1(F_1)\cdot h \\
  &\geq c_1(F_0)\cdot h. \label{eq56}
\end{align}
Combining this with the semistability of $E$ for $s \gg 0$, and noting we already have $c_1(\Phi^0 F)\cdot f = 0 = c_1(E)\cdot f$ as well as $\mu^\ast (\Phi^0 F)=\mu^\ast (E)$, we obtain
\begin{equation*}
- \frac{c_1(\wh{E})\cdot h}{c_1(\wh{E})\cdot f} = \frac{\chi (E)}{\rank (E)} \geq \frac{\chi (\Phi^0 F)}{\rank (\Phi^0 F)} = - \frac{ c_1(F_0)\cdot h}{c_1(F_0)\cdot f},
\end{equation*}
which in turn gives
\begin{align*}
 c_1(F) \cdot h \cdot \frac{\chi(F)}{c_1(F)\cdot f} \cdot \frac{c_1(\wh{E})\cdot f}{c_1(\wh{E})\cdot h}  &\geq  c_1(F) \cdot h \cdot \frac{\chi (F)}{c_1(F)\cdot f} \cdot \frac{c_1(F_0)\cdot f}{c_1(F_0)\cdot h} \\
&\geq \chi (F) \text{ by \eqref{eq55}, \eqref{eq56}}.
\end{align*}
Above, we used the fact that $c_1(G)\cdot f, c_1(G)\cdot h \geq 0$ for any 1-dimensional sheaf $G$, since $f, h$ are effective divisors on $X$.  Also, $\chi (F) >0$ by the hypothesis $\chi (\wh{E})>0$ and the equality in \eqref{eq51}.  Along with \eqref{eq53}, we now have \eqref{eq52}.
\end{proof}

\begin{lemma}\label{pro4}
Let $F \in \Coh (Y)$ be a 1-dimensional sheaf that is $\omega$-semistable for $s \gg 0$ with $c_1(F)\cdot f >0$ and $\chi (F) >0$.   Then $F$  is $\Phi$-WIT$_0$, and $\widehat{F}$ is torsion-free and $\omega$-semistable for $s \gg 0$.
\end{lemma}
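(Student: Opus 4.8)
The plan is to prove Lemma \ref{pro4} as the surface analogue of Lemma \ref{0602-lemma2}, establishing the three assertions ($F$ is $\Phi$-WIT$_0$, $\widehat F$ is torsion-free, $\widehat F$ is semistable) in turn. Throughout I would write $\mu_f(-)=\frac{c_1(-)\cdot f}{\rank(-)}$ and $\mu^\ast(-)=\frac{c_1(-)\cdot h}{\rank(-)}$, and use the dictionary between the invariants of a sheaf and those of its $\Phi$-transform supplied by Proposition \ref{prop:product_change_class}; for a $\Phi$-WIT$_0$ sheaf $G$ this reads $\rank(\widehat G)=c_1(G)\cdot f$, $c_1(\widehat G)\cdot f=\rank(G)$, and $c_1(\widehat G)\cdot h=\chi(G)$. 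I would also reuse, exactly as in the proof of Lemma \ref{pro3}, the facts that by \cite[Lemma 2.6]{Lo7} a $\Phi$-WIT$_1$ torsion sheaf is a fiber sheaf, that a $\Phi$-WIT$_1$ fiber sheaf $G$ has $c_1(G)\cdot f=0$ and $\chi(G)\le 0$, and that any subsheaf of a $\Phi$-WIT$_1$ sheaf is again $\Phi$-WIT$_1$ (since $\Phi^0$ of a subsheaf injects into $\Phi^0$ of the sheaf).

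First I would show $F$ is $\Phi$-WIT$_0$. Decompose $F$ by its WIT filtration $0\to F_0\to F\to F_1\to 0$, with $F_0$ being $\Phi$-WIT$_0$ and $F_1$ being $\Phi$-WIT$_1$, as was done in the proof of Lemma \ref{pro3}. Then $F_1$ is a $\Phi$-WIT$_1$ torsion sheaf, hence a fiber sheaf with $c_1(F_1)\cdot f=0$ and $\chi(F_1)\le 0$. Since $c_1(F)\cdot f>0$ forces $F_0\neq 0$, if $F_1$ were nonzero it would be a proper quotient of $F$, and comparing reduced Hilbert polynomials for $s\gg 0$ gives $p_\omega(F)>p_\omega(F_1)$: the constant term of $p_\omega(F)$ is $\frac{\chi(F)}{c_1(F)\cdot h+s\,c_1(F)\cdot f}\to 0^{+}$ because $\chi(F)>0$ and $c_1(F)\cdot f>0$, whereas that of $p_\omega(F_1)$ equals $\frac{\chi(F_1)}{c_1(F_1)\cdot h}\le 0$. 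This contradicts the $\omega$-semistability of $F$, so $F_1=0$ and $F$ is $\Phi$-WIT$_0$. Being an $\omega$-semistable sheaf of dimension $1$, $F$ is pure, so $\widehat F$ is pure of dimension $2$, i.e. torsion-free, by preservation of purity under $\Phi$ (the surface analogue of \cite[Lemma 4.3]{Lo11}); alternatively one checks directly that a torsion subsheaf of $\widehat F$ would be a $\Phi$-WIT$_1$ fiber sheaf transforming to a fiber subsheaf of $\iota^\ast F$ of positive Euler characteristic, contradicting semistability. The dictionary gives $\rank(\widehat F)=c_1(F)\cdot f>0$ and $c_1(\widehat F)\cdot f=\rank(F)=0$, so $\mu_f(\widehat F)=0$.

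The main step is the semistability of $\widehat F$ for $s\gg 0$, which I would verify against Lemma \ref{lemma20}(1). Take a subsheaf $E'\subseteq\widehat F$; it is $\Phi$-WIT$_1$, and applying $\Phi$ to $0\to E'\to\widehat F\to Q\to 0$ and using $\Phi\circ\Phi=\iota^\ast[-1]$ (Proposition \ref{prop:FM_product}, noting $\iota^\ast F$ is again $\omega$-semistable since $\iota$ fixes $f$ and $h$) yields the four-term exact sequence $0\to\Phi^0 Q\to\widehat{E'}\overset{\alpha}{\to}\iota^\ast F\to\widehat Q\to 0$, exactly parallel to \eqref{0602-eq12}. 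Here $\image\alpha$ is a subsheaf of the $\omega$-semistable sheaf $\iota^\ast F$, while $\Phi^0 Q$ and $\widehat Q$ are the correction terms, which are fiber sheaves by \cite[Lemma 2.6]{Lo7}. I would then run the three-tier lexicographic comparison of Lemma \ref{lemma20}(1): first the $\mu_f$-tier (using $\rank(\widehat{E'})=-c_1(E')\cdot f\ge 0$, whence $\mu_f(E')\le 0=\mu_f(\widehat F)$), then the $c_1\cdot h/\rank$-tier, then the $\chi/\rank$-tier, transferring each inequality across $\Phi$ to the corresponding tier of the semistability of $\iota^\ast F$ via Lemma \ref{lemma20}(2) and absorbing the fiber-sheaf corrections through their sign constraints ($c_1\cdot f=0$, $c_1\cdot h\ge 0$, and $\chi\le 0$ for the $\Phi$-WIT$_1$ correction terms).

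I expect this third step to be the main obstacle, precisely as in Lemma \ref{0602-lemma2}: the difficulty is not any individual inequality but the careful propagation through the lexicographic tiers, where in each boundary case (equality at an earlier tier) one must confirm that the correction terms $\Phi^0 Q$ and $\widehat Q$ have the right sign of $\chi$ and $c_1\cdot h$ so that the next tier is not spoiled. The cleanest route is to mirror the reductions of Lemma \ref{0602-lemma2} step by step under the surface FMT dictionary of Proposition \ref{prop:product_change_class}, successively constraining the Chern character of $E'$ at each stage and invoking \cite[Lemma 2.6]{Lo7} together with the positivity facts built into Lemma \ref{lemma20}.
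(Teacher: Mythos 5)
Your overall architecture is exactly the paper's: the same three assertions in the same order, the same WIT decomposition $0\to F_0\to F\to F_1\to 0$ for the first (your reduced-Hilbert-polynomial comparison of the quotient $F_1$ is equivalent to the paper's appeal to the conditions in Lemma \ref{lemma20}(2)), the same fiber-subsheaf contradiction for the second (your ``alternative'' direct check \emph{is} the paper's argument; the appeal to a surface analogue of \cite[Lemma 4.3]{Lo11} is unnecessary), and the same four-term exact sequence for the third. Steps one and two are correct as written. The problem is step three, which is where essentially the entire content of the paper's proof lives and which your proposal does not carry out: after the first ($\mu_f$) tier you only assert that the remaining tiers follow by ``mirroring the reductions of Lemma \ref{0602-lemma2}'' and ``absorbing the fiber-sheaf corrections through their sign constraints.'' What is actually needed, and absent, is: (i) in the reduced case $c_1(E')\cdot f=0$, a proof that $\alpha\neq 0$ (else $\wh{E'}\cong\Phi^0Q$ would be simultaneously $\Phi$-WIT$_0$ and $\Phi$-WIT$_1$, hence zero) and that $c_1(\image \alpha)\cdot f>0$, which requires the just-established torsion-freeness of $\wh F$ to rule out $\image \alpha$ being a $\Phi$-WIT$_0$ fiber sheaf; (ii) the middle-tier inequality, obtained from $c_1(\wh{E'})\cdot f=c_1(\image \alpha)\cdot f$ and $\chi(\wh{E'})\le\chi(\image \alpha)$ together with semistability of $F$ applied to $\image \alpha$ (the paper's \eqref{eq58} and \eqref{eq57}); and (iii) the boundary case, where equality in \eqref{eq57} collapses \eqref{eq58} to equalities, so that the \emph{second} clause \eqref{eq-Yos3.35} of Lemma \ref{lemma20}(2) becomes applicable to $\image \alpha$, and a several-line chain of inequalities is then required to convert it, together with $c_1(\wh{E'})\cdot h\ge c_1(\image \alpha)\cdot h$, into \eqref{eq-Yos3.33} for $E'\subset\wh F$. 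Item (iii) is the crux of the lemma; it is a genuine computation, not a formal transfer across the tiers, and your proposal contains no version of it.

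A secondary inaccuracy confirms that these details have not been checked: you claim both correction terms $\Phi^0 Q$ and $\wh Q$ (i.e., $\Phi^1 Q$) are fiber sheaves by \cite[Lemma 2.6]{Lo7}. That lemma applies to $\Phi$-WIT$_1$ \emph{torsion} sheaves, whereas $\Phi^1 Q$ is $\Phi$-WIT$_0$ and is a quotient of the $1$-dimensional sheaf $\iota^\ast F$, which need not be supported on fibers; the claim is false for it (harmlessly, since no property of $\Phi^1 Q$ is ever used in the paper's proof). Likewise $\Phi^0 Q$ is a fiber sheaf only \emph{after} the reduction to $c_1(E')\cdot f=0$, when it becomes a $\Phi$-WIT$_1$ subsheaf of the torsion sheaf $\wh{E'}$; stated unconditionally, as in your sketch, it is unjustified.
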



\begin{proof}
First, we show that $F$ is $\Phi$-WIT$_0$: consider the short exact sequence in $\Coh (Y)$
\[
0 \to F_0 \to F \to F_1 \to 0
\]
where $F_i$ is $\Phi$-WIT$_i$.  Since $F_1$ is $\Phi$-WIT$_1$ and torsion, it is a fiber sheaf \cite[Lemma 2.6]{Lo7}.  Thus $c_1(F_1)\cdot f =0$ and $\chi (F_1) \leq 0$ by Remark \ref{remark1}, which imply $c_1(F_0)\cdot f = c_1(F)\cdot f>0$ and $\chi (F_0) \geq \chi (F)>0$.  By the semistability of $F$, it follows that $\chi (F_0)=\chi (F)$.   If $F_1 \neq 0$, then $c_1 (F_1)\cdot h >0$ nd $c_1(F_0)\cdot h < c_1(F)\cdot h$, violating \eqref{eq-Yos3.35} and hence the semistability of $F$.  Thus $F_1=0$ and $F$ is $\Phi$-WIT$_0$.

Next, we show that $\wh{F}$ is torsion-free.  Consider any torsion subsheaf $A$ of $\wh{F}$.  Then $A$ is $\Phi$-WIT$_1$, and so is a fiber sheaf \cite[Lemma 2.6]{Lo7}.  Then the injection $A \hookrightarrow \wh{F}$ is taken by $\Phi$ to a morphism $\beta : \wh{A} \to F$ where $\image \beta$ is a $\Phi$-WIT$_0$ fiber subsheaf of $F$ with $\chi (\image \beta)>0$, which violates \eqref{eq-Yos3.34}.  Thus $A$ must be zero, and $\wh{F}$ is torsion-free.

Now, we show that $\wh{F}$ is $\omega$-semistable for $s \gg 0$.  Consider any short exact sequence
\[
0 \to A \to \wh{F} \to B \to 0
\]
in $\Coh (X)$ with $0\neq A \subsetneq \wh{F}$, which yields the exact sequence in $\Coh (Y)$
\[
0 \to \Phi^0 B \to \wh{A} \overset{\alpha}{\to} F \to \Phi^1 B \to 0.
\]

Since $A$ is $\Phi$-WIT$_1$ and torsion-free, we have $c_1(A)\cdot f \leq 0$ by \cite[Lemma 2.5]{Lo7}.  If $c_1(A) \cdot f <0$, then $A$ does not destabilise $\wh{F}$ (since $c_1(\wh{F})\cdot f =0$, which follows from $F$ being torsion).  So let us assume $c_1(A)\cdot f =0$ from now on.  That $c_1(A)\cdot f=0$ then implies $\rank (\wh{A})=0$ by Proposition \ref{prop:product_change_class}, i.e.\ $\wh{A}$ is torsion.  Then $\Phi^0 B$ is $\Phi$-WIT$_1$ and torsion, and so is a fiber sheaf by \cite[Lemma 2.6]{Lo7}.   Now, if $\alpha =0$, then $\wh{A} \cong \Phi^0B$ is both $\Phi$-WIT$_0$ and $\Phi$-WIT$_1$, forcing $\wh{A}=0$.  So let us assume that $\alpha \neq 0$, i.e.\ $\image \alpha \neq 0$.

If $c_1(\image \alpha) \cdot f =0$, then $\image \alpha$ must be a fiber sheaf (since it is torsion) and $\Phi$-WIT$_0$ (being a quotient of $\wh{A}$).  This implies that $\wh{F}$ has a fiber subsheaf, contradicting its being torsion-free.  By \cite[Lemma 2.5]{Lo7}, we have $c_1 (\image \alpha)\cdot f \geq 0$, and so  we must have $c_1 (\image \alpha)\cdot f > 0$.  Now we have, noting that $\Phi^0B$ is a $\Phi$-WIT$_1$ fiber sheaf,
\begin{equation*}
  c_1 (\wh{A}) \cdot f = c_1(\image \alpha)\cdot f  >0
\end{equation*}
and
\begin{equation*}
  \chi (\wh{A}) = \chi (\image \alpha) + \chi (\Phi^0 B) \leq \chi (\image \alpha).
\end{equation*}
Therefore, we have
\begin{equation}\label{eq58}
 \frac{\chi (\wh{A})}{c_1 (\wh{A})\cdot f} \leq \frac{\chi (\image \alpha)}{c_1 (\image \alpha)\cdot f} \leq \frac{\chi (F)}{c_1(F)\cdot f},
\end{equation}
(where the second inequality follows from the semistability of $F$), which then implies
\begin{equation}\label{eq57}
  \frac{ c_1(A)\cdot h}{\rank (A)} \leq \frac{ c_1(\wh{F})\cdot h}{\rank (\wh{F})},
\end{equation}
which is \eqref{eq-Yos3.31} for $A$ and $\wh{F}$.  Assuming that equality holds in \eqref{eq57} (which also implies equality throughout \eqref{eq58}), we have
\begin{align*}
  -\frac{c_1(\wh{A})\cdot h}{c_1(\wh{A})\cdot f} &\leq  \frac{-c_1(\image \alpha)\cdot h}{c_1(\image \alpha)\cdot f} \\
  &= - c_1 (\image \alpha)\cdot h \cdot \frac{\chi (F)}{c_1(F)\cdot h}\cdot \frac{c_1(F)\cdot h}{\chi (F)} \cdot \frac{1}{c_1 (\image \alpha)\cdot f} \\
  &\leq -\chi (\image \alpha) \cdot \frac{c_1(F)\cdot h}{\chi (F)} \cdot \frac{1}{ c_1 (\image \alpha )\cdot f} \\
   &\text{\qquad since $c_1 (\image \alpha) \cdot h\cdot \frac{\chi (F)}{c_1(F)\cdot h} \geq \chi (\image\alpha)$ by the semistability of $F$} \\
   &= -\frac{\chi (F)}{c_1(F)\cdot f} \cdot \frac{c_1(F)\cdot h}{\chi (F)} \\
   &\text{\qquad by \eqref{eq58}} \\
   &= - \frac{c_1(F)\cdot h}{c_1(F)\cdot f}.
\end{align*}
Overall, we have
\[
 -\frac{c_1(\wh{A})\cdot h}{c_1(\wh{A})\cdot f} \leq  - \frac{c_1(F)\cdot h}{c_1(F)\cdot f},
\]
which corresponds to
\[
  \frac{\chi (A)}{\rank (A)} \leq \frac{\chi (\wh{F})}{\rank (\wh{F})},
\]
which is \eqref{eq-Yos3.33} for $A$ and $\wh{F}$.  We have thus shown that $\wh{F}$ is $\omega$-semistable for $s \gg 0$.
\end{proof}

\end{document}